\newtheorem{exm}{Example}[section]
\newtheorem{rmk}[exm]{Remark}
\newtheorem{prp}[exm]{Proposition}
\newtheorem{thm}[exm]{Theorem}
\newtheorem{cor}[exm]{Corollary}
\newtheorem{df}[exm]{Definition}
\newcommand{\n}{\noindent}
\newcommand {\cat}{\mathbf}
\def\Z{\mathbb{Z}}
\def\C{\mathbb{C}}
\begin{document}


%
%

\title{Gyro-Groups, Gyro-splittings and Co-homology}

\author{Ramji Lal}

\address{Department of Mathematics, University of Allahabad, Allahabad, India\\
mathrjl@gmail.com}

\author{Vipul Kakkar}

\address{Department of Mathematics, Central University of Rajasthan, Kishangarh, India\\
vplkakkar@gmail.com}

\maketitle


\begin{abstract}
In this paper, we study gyro-groups associated to groups, group extensions admitting gyro-sections, and corresponding co-homologies. We also describe the  obstructions in terms of co-homomology. The notion of gyro-Schur Multiplier and that of gyro-Milnor $K_{2}$ group are introduced.
\end{abstract}

\keywords{Gyro-groups, Gyro-splittings, Co-homology, Schur Multipliers.}


\section{Introduction}
Let $G$ be a group. We have an associated right loop $(G, \circ_1)$, where the binary operation $\circ_1$ is given by $x\circ_1y\ =\ y^{-1}xy^{2}$. The study of groups $G$ with prescribed properties on the associated  right loop $(G, \circ_1)$ was initiated by Foguel and Ungar \cite{tfau1,tfau2}. Indeed, they studied groups with prescribed properties on the associated left loop $(G, \circ)$ given by $x \circ y\ =\ x^{2}yx^{-1}$. However, for our convenience, we shall study it through the right loop structure $(G, \circ_1)$. It can be seen that $(G, \circ_1)$ is a right gyro-group \cite{laly1, laly2}.  Foguel and Ungar \cite{tfau2} showed that $(G, \circ_1)$ is a gyro-group if and only if $G$ is central by 2-Engel group. Gyro-groups have deep intrinsic relationship with twisted subroups, near subgroups \cite{mas}, and in turn, with the group theoretic subclass of constraint satisfaction problems \cite{tfmv}. The twisted version of right gyro-groups and subgroups has been studied in \cite{laly2}. A group $G$ is said to be weakly isomorphic or gyro-isomorphic to a group $K$ if $(G, \circ_1)$ is isomorphic $(K, \circ_1)$. A weak classification program was initiated in \cite{rlal}. More generally, a map $f$ from $G$ to $K$ will be termed as a gyro-homomorphism if $f(a\circ_1b)\ =\ f(a)\circ_1f(b)$ for all $a, b\in G$.  The main purpose of this paper is to introduce and  study the extensions  admitting  sections  which are gyro-homomorphisms. We also study the resulting co-homologies, obstructions, and  an analogue of Schur multiplier which will be  termed as Gyro-Schur multiplier. In turn, we introduce the notion of gyro-Milnor $K_{2}$-group. 

\section{Preliminaries}
This section is devoted to some basic notions, definitions and results.

 A magma $(S,\circ)$ with identity $e$ is called a right loop if the equation $X\circ a=b$ has a unique solution in $S$ for all $a,b\in S$.

Let $(S, \circ )$ be a right loop with identity $e$. For each $x,y,z\in S$, the unique solution to the equation
\begin{center}
$X\circ (y\circ z)\ =\ (x\circ y)\circ z$
\end{center}
will be denoted by $x\theta f(y, z)$. The map $f(y, z)$ from $S$ to $S$ defined by $f(y, z)(x)\ =\ x\theta f(y, z)$ is a member of the symmetric group $Sym (S)$ on $S$ which fixes $e\in S$.  Thus, $f(y, z)$ is a member of $Sym (S-\{e\}) \subset Sym (S)$ and which is termed as an inner mapping of $(S, \circ )$ determined by the pair $(y, z)\in S\times S$. Since we shall be dealing with right loops and right transversals, for convenience, we shall adopt the convention $(p\circ q)(x)\ =\ q(p(x))$ for the product in $Sym (S)$. The subgroup of $Sym (S)$ generated by the set $\{f(y, z)\mid y, z\in S\}$ of all inner mappings is termed as the inner mapping group (also termed as the group torsion) of the right loop $(S, \circ )$. We will denote the inner mapping group of the right loop $(S, \circ )$ by $G_S$. For each $y\in S$, let $R_{y}$ denote the right multiplication map on $S$ defined by $R_{y}(x)\ =\ x\circ y$. Clearly, $R_{y}\in Sym (S)$ for each $y\in S$ and the map $R$ from $S$ to $Sym (S)$ defined by $R(y)\ =\ R_{y}$ is an injective map. Let $R(S)$ denote the subgroup of $Sym (S)$ generated by the set $\{R_{y}\mid y\in S\}$ of all right multiplications. This is called the right multiplication group of $(S, \circ )$. Since 
\begin{center}
$(f(y, z ) o R_{y\circ z})(x)\ =\ f(y, z)(x)\circ (y\circ z)\ =\ (x\circ y)\circ (z)\ =\ (R_{y} o R_{z} )(x)$
\end{center}
for all $x, y, z\in S$, $R_{y}oR_{z}\ =\ f(y, z) o R_{y\circ z }$ for all $y, z\in S$. Again, 
\begin{center}
$(x\theta f(y', y)^{-1} \circ y')\circ y\ =\ x \circ (y' \circ y)\ =\ y$
\end{center}
for all $x, y\in S $, where $y'$ denotes the left inverse of $y$. This means that 
\begin{center}
$R_{y}^{-1}\ =\ f(y', y)^{-1} o R_{y'}$
\end{center}
for all $y\in S$. In turn, it follows that $G_{S}S$ is a subgroup of $R(S)$, where $S$ has been identified with the set $\{R_{y}\mid y\in S\}$ through the map $R$. Consequently, $R(S)\ =\ G_{S}S$. Since $G_{S}\bigcap S\ =\ \{I_{S}\}$, $S$ is a right transversal to $G_{S}$ in  $G_{S}S$. The group $G_{S}S$ is called the group extension ( also called the right multiplication group) of $S$.  Finally, $G_{S}S$ is universal in the sense that if $G$ is any group in which $(S, \circ )$ appears as a right transversal to a subgroup  of $G$, then there is a unique group homomorphism from $G_{S}S$ to $G$ which is identity on $S$ (see Theorem 3.4 \cite{lal1996transversals}).

\begin{df}(\cite{tfau1, laly1})
 A magma $(S, \circ)$ with a right identity $e$ is termed as a right gyro-group if the following four conditions  hold:
\begin{enumerate}
	\item[(i)] For each element $a\in S$, there is a right inverse $a'\in S$ with respect to $e$ in the sense that $a\circ a'\ =\ e$.
	\item[(ii)] For each $x, y, z\in S$, there is a unique element $x\theta f(y, z)\in S$ such that
     \[(x\circ y)\circ z\ =\ x\theta f(y, z) \circ (y\circ z).\]
	\item[(iii)] The map $f(y, z)$ from $S$ to $S$ given by $f(y, z)(x)\ =\ x\theta f(y, z)$ is an automorphism of $(S, \circ)$.
	\item[(iv)] For all $y\in S$, $f(y, y')\ =\ I_{S}$, where $I_S$ is the identity map on $S$.
\end{enumerate}
\end{df}

\n The following proposition gives us a necessary and sufficient condition for a magma to be a right gyro-group.

\begin{prp}(\cite{laly1})\label{s2p1}
 A magma $(S, \circ)$ is a right gyro-group if and only if $(S, \circ)$ is a right loop with identity such that all inner mappings $f(x, y)\in Aut (S, \circ)$ and $f(x', x)\ =\ I_{S}$, where $x'$ denotes the left inverse of $x$. $\sharp$
\end{prp}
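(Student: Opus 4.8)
The plan is to prove the equivalence by carefully comparing the axioms in the definition of a right gyro-group with the hypotheses on a right loop. The forward direction is essentially immediate: if $(S,\circ)$ is a right gyro-group, then by definition it is a magma with a right identity satisfying (i)--(iv). I would first argue that such an $S$ is in fact a right loop with a genuine (two-sided) identity. Existence of solutions to $X\circ a = b$ follows by combining right inverses with condition (ii): writing $b = b\theta f(a',a) \circ (a'\circ a)$ and using that $f(a',a) = I_S$ (axiom (iv)), one recovers $a'\circ a = e$ and hence a left inverse, and then the solution to $X\circ a=b$ is built from $b$ and $a'$. Uniqueness of the solution comes from axiom (ii) applied in the form that guarantees $R_a$ is injective. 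Once $e$ is shown to be a two-sided identity and $R_a \in Sym(S)$ for all $a$, the inner mappings $f(x,y)$ defined abstractly for a right loop coincide with those in axiom (ii), so axiom (iii) says precisely $f(x,y)\in Aut(S,\circ)$, and axiom (iv) gives $f(x',x) = I_S$ after identifying the right inverse $a'$ with the left inverse $x'$.

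For the converse, I would start from a right loop $(S,\circ)$ with identity $e$ such that every inner mapping $f(x,y)$ is an automorphism and $f(x',x) = I_S$ for all $x$. Here $e$ is automatically a right identity, and the defining property of a right loop gives, for each $a$, a unique solution to $X\circ a = b$; taking $b=e$ yields the left inverse $x'$ with $x'\circ x = e$. The key point to extract is a \emph{right} inverse, i.e.\ axiom (i): I would show that the left inverse $x'$ is also a right inverse, $x\circ x' = e$. This is where the hypothesis $f(x',x) = I_S$ does the work. Using the associativity-defect identity $(x\circ y)\circ z = x\theta f(y,z)\circ(y\circ z)$ with a suitable choice — for instance $y = x'$ and exploiting $x'\circ x = e$ together with $f(x',x)=I_S$ — one derives $x\circ x' = e$. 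Axioms (ii) and (iii) are then just restatements of, respectively, the existence/uniqueness of inner mappings in a right loop and the hypothesis that they are automorphisms, and axiom (iv) is the hypothesis $f(x',x)=I_S$ once we know $x'$ is a two-sided inverse so that the "$a'$" of axiom (i) equals the "$x'$" of the hypothesis.

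The main obstacle, and the step deserving the most care, is the interchange between the two notions of inverse: the definition of right gyro-group postulates a right inverse $a'$ (with $a\circ a'=e$), whereas Proposition~\ref{s2p1} is phrased in terms of the left inverse $x'$ (with $x'\circ x=e$). One must show these coincide in the relevant setting, and the bridge in both directions is precisely the condition $f(x',x)=I_S$ (equivalently $f(a,a')=I_S$). Concretely, I expect to need the identity $R_{y}R_{z} = f(y,z)R_{y\circ z}$ from the preliminaries, together with $R_y^{-1} = f(y',y)^{-1}R_{y'}$; specializing the latter with $f(x',x)=I_S$ shows $R_x^{-1}=R_{x'}$, whence $x\circ x' = R_{x'}(x) = R_x^{-1}(x)$, and a short computation identifies this with $e$. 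I would also need to verify that no circularity arises — that the right-loop structure alone (before invoking the automorphism hypothesis) suffices to make sense of all the $f(x,y)$ and of $R_x \in Sym(S)$ — which it does, since those are general facts about right loops recalled earlier in this section. Everything else is bookkeeping: matching up the four itemized conditions with the two hypotheses of the proposition.
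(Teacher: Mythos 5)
The paper does not prove this proposition --- it is quoted from \cite{laly1} and marked with $\sharp$ --- so your argument has to stand on its own. Your converse direction is sound: from $(a\circ x')\circ x = a\theta f(x',x)\circ (x'\circ x) = a\circ e = a$ and bijectivity of $R_x$ you correctly get $R_{x'}=R_x^{-1}$, hence $x\circ x' = R_x^{-1}(x)=e$ because $R_x(e)=x$; then $x$ is the (unique) left inverse of $x'$, so the hypothesis applied to $x'$ yields axiom (iv), and axioms (ii)--(iii) are restatements of the right-loop and automorphism hypotheses.

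The forward direction, however, is circular as written. You invoke ``$f(a',a)=I_S$ (axiom (iv))'' in order to \emph{derive} $a'\circ a = e$; but axiom (iv) asserts $f(a,a')=I_S$ with $a'$ the postulated \emph{right} inverse of $a$, and to read it as $f(a',a)=I_S$ you must already know that $a$ coincides with the postulated right inverse of $a'$, i.e.\ that $a'\circ a = e$ --- exactly what you are trying to establish (and axiom (i) does not even assert uniqueness of right inverses, so the identification $(a')'=a$ needs an argument). The repair is to start from the identity that axiom (iv) literally gives: $(x\circ a)\circ a' = x\theta f(a,a')\circ(a\circ a') = x\circ e = x$ for all $x$, so $R_{a'}$ composed after $R_a$ is $I_S$. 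Hence every $R_a$ is injective and every $R_{a'}$ is surjective; applying the same identity to $a'$ and its own right inverse makes $R_{a'}$ bijective, and then $R_a = R_{a'}^{-1}$ is bijective, which is the right-loop property. From this one deduces, in order: $e\circ a = a$ (since $R_{a'}(e\circ a)=e=R_{a'}(a)$ and $R_{a'}$ is injective); $a'\circ a = R_a(a') = R_{a'}^{-1}(a') = e$ (using $e\circ a'=a'$); $(a')'=a$ (both solve $X\circ a'=e$ and $R_{a'}$ is injective); and finally $f(a',a)=f(a',(a')')=I_S$ by axiom (iv) applied to $a'$. With the computation reoriented this way, the rest of your outline goes through.
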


\begin{df}(\cite{tfau1, laly1})
 A right transversal $S$ to a subgroup $H$ of the group $G$ containing the identity $e$ of $G$ is called a gyro-transversal if $S\ =\ S^{-1}\ =\ \{x^{-1}\mid x\in S\}$ and $h^{-1}xh\in S$ for all $x\in S$ and $h\in H$.
\end{df}

\n The following proposition relates right gyro-groups and gyro-transversals.

\begin{prp}(\cite{laly1})\label{s2p2}
 (Representation Theorem for Right Gyro-groups) A right loop $(S, \circ)$ is a right gyro-group if and only if it is a gyro-transversal to the right inner mapping group (group torsion) $G_{S}$ of $S$ in its group extension (right multiplication group) $G_{S}S$. $\sharp$
\end{prp}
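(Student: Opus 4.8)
The plan is to prove both directions of the equivalence in Proposition \ref{s2p2} by carefully unwinding the definitions and invoking the structural facts established in the Preliminaries.

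For the forward direction, suppose $(S,\circ)$ is a right gyro-group. By Proposition \ref{s2p1} this is the same as saying $(S,\circ)$ is a right loop with identity whose inner mappings $f(x,y)$ all lie in $\mathrm{Aut}(S,\circ)$ and satisfy $f(x',x)=I_S$. From the general discussion, $S$ sits inside its right multiplication group $G_SS$ as a right transversal to the inner mapping group $G_S$, with $G_S\cap S=\{I_S\}$. So the only thing to check is that this transversal is a gyro-transversal, i.e.\ that $S=S^{-1}$ and that $G_S$ normalizes $S$ in the sense $h^{-1}R_xh\in S$ for all $R_x\in S$, $h\in G_S$. The identity $R_y^{-1}=f(y',y)^{-1}\circ R_{y'}$ from the Preliminaries, together with $f(y',y)=I_S$, gives $R_y^{-1}=R_{y'}\in S$, so $S=S^{-1}$. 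For the conjugation condition, since $G_S$ is generated by the $f(y,z)$ it suffices to handle $h=f(y,z)$; here I would use that $f(y,z)$ is an automorphism of $(S,\circ)$ and the relation $R_yR_z=f(y,z)\circ R_{y\circ z}$ to compute $f(y,z)^{-1}R_xf(y,z)$ and identify it as $R_{f(y,z)(x)}$, using the convention $(p\circ q)(x)=q(p(x))$ for composition. Concretely, applying $f(y,z)^{-1}R_xf(y,z)$ to an element $u\in S$ and using that $f(y,z)$ is an automorphism should yield $f(y,z)(f(y,z)^{-1}(u)\circ x)=u\circ f(y,z)(x)=R_{f(y,z)(x)}(u)$, which is what we need.

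For the converse, suppose $(S,\circ)$ is a right loop which, as a right transversal to $G_S$ in $G_SS$, is a gyro-transversal. Then $S=S^{-1}$ forces, via $R_y^{-1}=f(y',y)^{-1}\circ R_{y'}$, that $f(y',y)^{-1}\circ R_{y'}\in S$; since $S$ is a transversal and $R_{y'}\in S$, comparing cosets of $G_S$ gives $f(y',y)=I_S$. The normalizing condition $h^{-1}R_xh\in S$ for all $h\in G_S$ applied to $h=f(y,z)$ gives $f(y,z)^{-1}R_xf(y,z)=R_w$ for some $w\in S$; evaluating both sides at $e$ identifies $w=f(y,z)(x)$, and then evaluating at an arbitrary $u\in S$ yields $f(y,z)(f(y,z)^{-1}(u)\circ x)=u\circ f(y,z)(x)$, i.e.\ $f(y,z)$ is an automorphism of $(S,\circ)$. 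By Proposition \ref{s2p1}, $(S,\circ)$ is a right gyro-group.

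The main obstacle I expect is bookkeeping rather than depth: one must be scrupulous about the non-standard composition convention $(p\circ q)(x)=q(p(x))$ when manipulating conjugates in $Sym(S)$, and about the distinction between the left inverse $y'$ and the right inverse when verifying $S=S^{-1}$ and $f(y',y)=I_S$. The key leverage throughout is the pair of identities $R_yR_z=f(y,z)\circ R_{y\circ z}$ and $R_y^{-1}=f(y',y)^{-1}\circ R_{y'}$ from the Preliminaries, plus the fact that $S$ being a transversal lets us read off equalities in $G_S$ from equalities of cosets. Everything else is a direct translation between the axioms of Definition (right gyro-group), the criterion in Proposition \ref{s2p1}, and the defining conditions of a gyro-transversal.
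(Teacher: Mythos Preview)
The paper does not actually prove this proposition: it is quoted from \cite{laly1} and marked with $\sharp$, indicating the proof is omitted. So there is no in-paper argument to compare against.

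Your proposed argument is correct and is essentially the natural one. A couple of minor points worth tightening: in the forward direction you reduce the conjugation condition to generators $h=f(y,z)$, but you should also note that the computation $f(y,z)^{-1}R_xf(y,z)=R_{f(y,z)(x)}$ in fact shows conjugation by $f(y,z)$ is a bijection of $S$ onto $S$, so the set of $h\in G_SS$ with $h^{-1}Sh=S$ is a subgroup containing the generators and hence all of $G_S$. In the converse, your coset argument for $f(y',y)=I_S$ is exactly right: $R_y^{-1}=f(y',y)^{-1}R_{y'}$ puts $R_y^{-1}$ and $R_{y'}$ in the same right $G_S$-coset, and both lie in the transversal $S$, forcing equality. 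Your caution about the composition convention and about left versus right inverses is well placed but does not cause trouble here, since Proposition~\ref{s2p1} supplies precisely $f(x',x)=I_S$ for the left inverse $x'$, which is what the identity $R_y^{-1}=f(y',y)^{-1}R_{y'}$ needs.
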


\n For all the undefined terms of the cohomology theory in this paper, we refer \cite[Chapter 10]{lalb}.

\section{Gyro-groups and Gyro-transversals}

Consider a group $G$ and the semidirect product $\hat{G}\ =\  G\rtimes Inn (G)$ of $G$ with $Inn(G)$, where $Inn(G)$ denotes the group of inner automorphisms of $G$. An element of $\hat{G}$ is uniquely expressible in the form $(x, \alpha)$, where $x\in G$ and $\alpha\in Inn(G)$. The product $\cdot$ is given by $(x, \alpha )\cdot (y, \beta )\ =\ (x\alpha (y), \alpha\beta)$.  Every element $(x, \alpha )$ is uniquely expressible as $(x, \alpha )\ =\ (e, \alpha )(\alpha^{-1}(x), I_{G})$. Thus, $S\ =\ G\times \{I_{G}\}$ is a right transversal to $\{e\}\times Inn (G)$ in $\hat{G}$. The induced right loop structure on $S$ is the group structure on $S$. Since $S$ is a normal subgroup of $\hat{G}$, it is a gyro-transversal.  Further, an arbitrary right transversal  to $\{e\}\times Inn (H)$ in $\hat{G}$ is of the form $S_{g}\ =\ \{(e,g(x))\cdot (x, I_{G})\ =\ (g(x)(x), g(x))\mid x\in G\}$, where $g$ is a map from $G$ to $Inn (G)$ with $g(e)\ =\ I_{G}$.    Further, 
 
\begin{center}
$(g(x)(x), g(x))(g(y)(y), g(y))\ =\ (e, \alpha )(g(z)(z), g(z))$,
\end{center}
where $z\ =\ g(y)^{-1}(x)y$ and $\alpha\ =\ g(x)g(y)g(z)^{-1}$. Hence, the induced right loop operation $\circ_g$ on $S_{g}$ is given by
\begin{center}
$(g(x)(x), g(x))\circ_g(g(y)(y), g(y))\ =\ (g(z)(z), g(z))$,
\end{center}
where $z\ =\ g(y)^{-1}(x)y$. Clearly, the bijective map $x\mapsto (g(x)(x), g(x))$ from $G$ to $S_{g}$ induces a right loop structure  $\hat{\circ}_{g}$ on $G$ which is given by 
\begin{center}
$x\hat{\circ}_{g}y\ =\ g(y)^{-1}(x)y$.
\end{center}

\n Evidently, $(S_{g}, \circ_g)$ is isomorphic to $(G, \hat{\circ}_{g})$. It follows from \cite[Lemma 5.11]{laly1} that $S_{g}$ is a gyro-transversal if and only if $g(x^{-1})\ =\ g(x)^{-1}$ and $g$ is equivariant in the sense that   $g(\alpha^{-1}(x))\ =\ \alpha^{-1}g(x)\alpha$ for all $x\in G$ and $\alpha\in Inn(G)$. In turn, it also follows \cite[Proposition 5.10]{laly1} that $(S_g, \circ)$ and so also $(G, \hat{\circ}_{g})$ is a right gyro-group if and only if  $g(x^{-1})\ =\ g(x)^{-1}$ and $g$ is equivariant in the sense that  $g(\alpha^{-1}(x))\ =\ \alpha^{-1}g(x)\alpha$ for all $x\in G$ and $\alpha\in Inn(G)$. Now, every map $g$ from $G$ to $Inn (G)$ is determined by a map $\lambda$ from $G$ to $G$ with $\lambda (e)\ =\ e$ such that $g(x)\ =\ i_{\lambda (x)}$, where $i_{a}$ denotes the inner automorphism defined by $i_a(x)=axa^{-1}$.  To say that $(S_{g}, \circ_g)$ is a right gyro-group is to say that $i_{\lambda (x^{-1})}\ =\ i_{(\lambda (x)^{-1})}$ and $i_{\lambda (i_{b^{-1}}(x))}\ =\ i_{b^{-1}}i_{\lambda (x)}i_{b}$ for all $x, b\in G$. This, in turn, is equivalent to say that $\lambda (x^{-1})\lambda (x)$ and $\lambda (b^{-1}xb)b^{-1}\lambda (x)^{-1}b$ belong to the center $Z(G)$ for all $x, b\in G$. In particular, if a map $\lambda $ satisfies the conditions (i)   $\lambda (x^{-1})\ =\ \lambda (x)^{-1}$,  and (ii) $\lambda$ is equivariant in the sense that $\lambda (b^{-1}xb)\ =\ b^{-1}\lambda (x)b$ for all $x, b\in G$, then $S_{g}$ is a gyro-transversal and $(S_{g}, \circ_g)$ is a right gyro-group. In turn, $(G, \hat{\circ}_{g})$ is a right gyro-group, where $\hat{\circ}_{g}$ is given by

\begin{center}
$x\hat{\circ}_{g} y\ =\ i_{\lambda (y)^{-1}}(x)y\ =\ \lambda (y^{-1})x\lambda (y)y$,
\end{center}
$x, y\in G$. For each $n\in \mathbb{Z}$, the map $\lambda_{n}$ from $G$ to $G$ given by $\lambda (x)\ =\ x^{n}$ satisfies the above two conditions. Consequently, for each $n$, we get a right gyro-group structure $\circ_n$ on $G$ which is given by 
\begin{center}
$x\circ_n y\ =\ i_{y^{-n}}(x)y\ =\ y^{-n}xy^{n+1}$.
\end{center}
We shall be interested in right gyro-groups $(G, \circ_1)$. 
\begin{df} 
A right loop $(S, \circ)$ will termed as a group based right loop if it is isomorphic to a sub right loop of $(G, \circ_1)$ for some group $G$. 
\end{df}

\n The category of group based right loops will be denoted by $\cat{GR}$. Note that a group need not be a group based right loop. Indeed, a 3-group $G$ is a group based right loop if and only if  all elements of order 3 lie in the center of $G$ \cite[Corollary 5.4]{rlal}. Thus, a group of exponent 3 is group based right loop if and only if it is abelian. In particular,  the  non abelian group of order $3^3$ which is of exponent 3 is not a group based right loop. 

\begin{df}
A map $f$ from a group $G$ to a group $G'$ is said to be a gyro-homomorphism if $f$ is a homomorphism from $(G, \circ_1)$ to $(G', \circ_1)$. More explicitly, $f$ is said to be a gyro-homomorphism if $f(y^{-1}xy^{2})\ =\ f(y)^{-1}f(x)f(y)^{2}$ for all $x, y\in G$. A bijective gyro-homomorphism is called a gyro-isomorphism.
\end{df}

\n Evidently, a group homomorphism is a gyro-homomorphism. However, a gyro-homomorphism need not be a group homomorphism. For example, consider the extra special 3-group $G$ of exponent 3. Then $(G, \circ_1)$ is an  abelian group and the  identity map $I_{G}$ is a gyro-homomorphism from the group $G$ to the group $(G, \circ_1)$ which is not a group homomorphism. It also follows that gyro-isomorphic groups need not be isomorphic. We have a category $\cat{\hat{GP}}$ whose objects are groups and morphisms are gyro-homomorphisms. Evidently, the category $\cat{GP}$ of groups is a subcategory of $\cat{\hat{GP}}$ which is faithful but not full, and the category $\cat{\hat{GP}}$ is a faithful  subcategory of $\cat{GR}$ which is not full. The proof of the following proposition is straight forward.

\begin{prp}\label{s3p1}
Let $f$ be a gyro-homomorphism from a group $G$ to a group $G'$. Then the following hold:
\begin{enumerate}
	\item[($i$)] $f(e)\ =\ e$.
	\item[($ii$)] The power of an element considered as an element of $(G, \circ_1)$ is the same as that considered as an element of the group $G$.
	\item[($iii$)] $f(a^{n})\ =\ f(a)^{n}$ for all $a\in G$ and $n\in \Z$.
	\item[($iv$)] Image of a sub right loop of $(G, \circ_1)$ under $f$ is a sub right loop of $(G', \circ_1)$.
	\item[($v$)] Inverse image of a sub right loop (normal sub right loop) of $(G', \circ_1)$ under $f$  is a sub right loop (normal sub right loop) of $(G, \circ_1)$.
	\item[($vi$)] The fundamental theorem of gyro-homomorphisms hold in the category $\cat{\hat{GP}}$.
\end{enumerate}
\end{prp}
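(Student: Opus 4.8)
The plan is to dispose of the six items in order, each reducing to a short computation with $x\circ_1 y=y^{-1}xy^{2}$ and the defining identity $f(y^{-1}xy^{2})=f(y)^{-1}f(x)f(y)^{2}$. Item $(i)$ is immediate: taking $x=y=e$ and using $e\circ_1 e=e$ gives $f(e)=f(e)^{2}$, so $f(e)=e$. The substantive preliminary, from which $(ii)$ follows at once, is the identity $a^{i}\circ_1 a^{j}=a^{-j}a^{i}a^{2j}=a^{i+j}$ valid for all $i,j\in\Z$: it shows that $\langle a\rangle$ is a sub right loop of $(G,\circ_1)$ on which $\circ_1$ coincides with the group multiplication of $G$, hence $(\langle a\rangle,\circ_1)$ is the cyclic group $\langle a\rangle$, the $\circ_1$-powers of $a$ are its ordinary powers, and the $\circ_1$-inverse of $a$ is $a^{-1}$. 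Two consequences will be used throughout: in $(G,\circ_1)$ both the left inverse and the right inverse of an element equal its group inverse, and the unique solution of $X\circ_1 a=b$ is $X=b\circ_1 a^{-1}$ (indeed $(b\circ_1 a^{-1})\circ_1 a=a^{-1}(b\circ_1 a^{-1})a^{2}=a^{-1}(aba^{-2})a^{2}=b$).

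Item $(iii)$ follows by restricting $f$ to $\langle a\rangle$: by $(ii)$, applied in both $G$ and $G'$, this restriction is a homomorphism of cyclic groups taking $a$ to $f(a)$, so $a^{n}\mapsto f(a)^{n}$ (alternatively, induct from $(i)$ using $f(a^{n+1})=f(a)^{-1}f(a^{n})f(a)^{2}$, and obtain the negative powers from $e=f(a^{-1})^{-1}f(a)f(a^{-1})^{2}$). For $(iv)$, if $T$ is a sub right loop of $(G,\circ_1)$ then $e=f(e)\in f(T)$, the set $f(T)$ is closed under $\circ_1$ since $f$ is a $\circ_1$-homomorphism, and $f(T)$ is closed under group inverses since $T$ is (inverses in $(G,\circ_1)$ being group inverses, by $(ii)$) and $f(a^{-1})=f(a)^{-1}$ by $(iii)$; by the division formula above, a subset of the right loop $(G',\circ_1)$ that contains $e$ and is closed under $\circ_1$ and group inverses is closed under right division, hence is a sub right loop, uniqueness of solutions being inherited from $(G',\circ_1)$. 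Item $(v)$ is the same bookkeeping applied to $f^{-1}(T')$; for the normal case one uses the internal characterisation of normality (invariance under the group torsion) together with the identity obtained by applying $f$ to the gyro-associative law, namely $f\bigl(x\,\theta f(y,z)\bigr)=f(x)\,\theta f\bigl(f(y),f(z)\bigr)$ for all $x,y,z\in G$, which exhibits $f$ as intertwining the $G_{G}$-action on $G$ with the $G_{G'}$-action on $G'$; hence the preimage of a $G_{G'}$-invariant sub right loop is $G_{G}$-invariant.

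Finally, for $(vi)$: by $(v)$, $\ker f:=f^{-1}(\{e\})$ is a normal sub right loop of $(G,\circ_1)$ and by $(iv)$, $f(G)$ is a sub right loop of $(G',\circ_1)$; the canonical projection $(G,\circ_1)\to(G,\circ_1)/\ker f$ is a gyro-homomorphism through which $f$ factors, and the induced map $(G,\circ_1)/\ker f\to f(G)$ is a gyro-isomorphism. This is exactly the fundamental theorem of homomorphisms for right loops, specialised to morphisms of $\cat{\hat{GP}}$. I do not expect a genuine obstacle: the one conceptual (rather than mechanical) step is the observation in $(ii)$ that powers and divisions inside $(G,\circ_1)$ are computed in the ambient group $G$, after which $(iv)$--$(vi)$ are formal; the single point needing care is quoting the correct internal description of a \emph{normal} sub right loop in $(v)$, so that the intertwining identity for $f$ can be applied cleanly.
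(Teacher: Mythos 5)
Your proposal is correct and is exactly the ``straightforward'' argument the paper has in mind (the paper omits the proof entirely, citing Suksumran only for item $(vi)$): everything rests on the observation that powers, inverses and right division in $(G,\circ_1)$ are computed in the ambient group, plus the standard intertwining identity $f(x\,\theta f(y,z))=f(x)\,\theta f(f(y),f(z))$ for the normality claim in $(v)$. All the individual computations check out, so nothing further is needed.
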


\n The proof of the fundamental theorem of gyro-group homomorphism can be found in \cite[Theorem 30, p. 418]{suksu}. Inverse image of a subgroup under $f$ need not be a subgroup. Consider the 3-exponent non-abelian group $G$ of order $3^{3}$. The identity map from $G$ to the elementary abelian  3-group  $(G, \circ_1)$ is a gyro-isomorphism. The number of subgroups of $(G, \circ_1)$ is 13 whereas the number of subgroups of $G$ is 4.

\begin{prp}\label{s3p2}
A map $f$ from $G$ to $G'$ is a gyro-homomorphism if and only if $f$ preserves identity and $f(y^{-1}xy^{2})\ =\ f(y^{-1})f(x)f(y^{2})$ for all $x, y\in G$.
\end{prp}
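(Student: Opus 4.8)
The plan is to prove the two implications separately. The forward direction is essentially immediate: if $f$ is a gyro-homomorphism, then by definition $f(y^{-1}xy^2) = f(y)^{-1}f(x)f(y)^2$ for all $x,y$, and by Proposition \ref{s3p1}(i) we know $f(e) = e$, so $f$ preserves identity. It remains to rewrite the right-hand side: we must show $f(y)^{-1} = f(y^{-1})$ and $f(y)^2 = f(y^2)$. Both follow from Proposition \ref{s3p1}(iii) (that $f(a^n) = f(a)^n$ for all $n \in \Z$), which we are entitled to assume. Hence $f(y^{-1}xy^2) = f(y^{-1})f(x)f(y^2)$, as required.

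For the converse, the hypothesis is that $f(e) = e$ and $f(y^{-1}xy^2) = f(y^{-1})f(x)f(y^2)$ for all $x,y \in G$. I want to deduce $f(y^{-1}xy^2) = f(y)^{-1}f(x)f(y)^2$, i.e. that $f(y^{-1}) = f(y)^{-1}$ and $f(y^2) = f(y)^2$. The key trick is to make clever substitutions into the given functional equation. Taking $x = e$ gives $f(y^{-1}y^2) = f(y) = f(y^{-1})f(e)f(y^2) = f(y^{-1})f(y^2)$, so we obtain the relation $f(y) = f(y^{-1})f(y^2)$ for all $y$. Next, replacing $y$ by $y^{-1}$ in this same relation gives $f(y^{-1}) = f(y)f(y^{-2})$. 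The goal is now to extract $f(y^{-1}) = f(y)^{-1}$ and thence $f(y^2) = f(y)^2$ from these and further substitutions; substituting $x = y$ or $x = y^{-1}$ into the original equation, and iterating, should pin down the powers.

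The main obstacle will be organizing the substitutions so that they actually close up: a priori $f$ need not respect \emph{any} products, so one must be careful that each deduced identity is genuinely a consequence and not circular. I expect the cleanest route is to first establish $f(y^n) = f(y)^n$ for all $n \in \Z$ directly from the hypothesis (mirroring how Proposition \ref{s3p1}(iii) would be proved), since once that is in hand the equivalence of the two forms of the gyro-homomorphism condition is purely formal. Concretely, from $f(y) = f(y^{-1})f(y^2)$ together with the analogous identity obtained by substituting powers of $y$ for $y$ (note $(y^k)^{-1}(y^k)(y^k)^2 = y^{2k}$, etc.), one runs an induction on $|n|$ to show $f(y^n) = f(y)^n$; the base cases $n = 0, 1$ are the hypothesis $f(e) = e$ and triviality, and the inductive step uses the relation to step from exponents $k$ and $2k$ to $3k$, combined with a separate argument (again substituting $x = y^{-1}$ and using $f(e) = e$) to handle negative exponents and fill the gaps. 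Once $f(y^{-1}) = f(y)^{-1}$ and $f(y^2) = f(y)^2$ are known, substituting back into the hypothesis yields the defining identity of a gyro-homomorphism, completing the proof. $\Box$ \medskip
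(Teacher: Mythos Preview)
Your forward direction matches the paper exactly. For the converse, the key substitutions you mention in passing --- $x=y$ and $x=y^{-1}$ --- are in fact the entire proof, and the paper uses precisely these two and nothing else: setting $x=y$ in the hypothesis gives $f(y^{2})=f(y^{-1})f(y)f(y^{2})$, whence $f(y^{-1})=f(y)^{-1}$; then setting $x=y^{-1}$ gives $e=f(y^{-1})f(y^{-1})f(y^{2})$, whence $f(y^{2})=f(y)^{2}$. That is already the conclusion.

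Your detour through $x=e$ and the proposed induction on $|n|$ to prove $f(y^{n})=f(y)^{n}$ for all $n$ is unnecessary, and as sketched it does not actually close up: the relation you extract from $x=e$, namely $f(y^{k})=f(y^{-k})f(y^{2k})$, links exponents $k,-k,2k$ and gives no clean step from $n$ to $n+1$; you would end up needing the substitutions $x=y$ and $x=y^{-1}$ anyway to get started. So the plan is not wrong --- the correct ingredients are present --- but it is over-engineered, and the vague induction should be dropped in favor of the two direct substitutions, which you already identified.
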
 

\begin{proof}
Let $f$ be a gyro-homomorphism. From the previous proposition $f$ preserves identity and powers. Consequently,

 $f(y^{-1}xy^{2})\ =\ f(y)^{-1}f(x)f(y)^{2}\ =\ f(y^{-1})f(x)f(y^{2})$ for all $x, y\in G$. 

\vspace{0.2 cm}

\n Conversely, suppose that $f$ preserves the identity and $f(y^{-1}xy^{2}) = f(y^{-1})f(x)f(y^{2})$ for all $x, y\in G$. Putting $x = y$, we get that $f(y^{2})\ =\ f(y^{-1})f(y)f(y^{2})$. This shows that $f(y^{-1})\ =\ f(y)^{-1}$ for all $y\in G$. Further, putting $x\ =\ y^{-1}$, we get that $1\ =\ f(y^{-1})f(y^{-1})f(y^{2})$. This shows that $f(y^{2})\ =\ f(y)^{2}$ for all $y\in G$.
\end{proof}

\begin{prp}\label{s3p21}
An identity preserving map $t$ from $G$ to $G'$ is a gyro-homomorphism if and only if $\partial t(y^{-1}, x)\partial t(y^{-1}x, y^{2})\ =\ 1$, where the boundary map $\partial t$ is given by $\partial t(x, y)\ =\ t(x)t(y)t(xy)^{-1}$.
\end{prp}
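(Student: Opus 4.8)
The plan is to unwind the definition of the boundary map $\partial t(x,y) = t(x)t(y)t(xy)^{-1}$ and compare the two characterizations of a gyro-homomorphism. By Proposition~\ref{s3p2}, an identity preserving map $t$ is a gyro-homomorphism precisely when $t(y^{-1}xy^{2}) = t(y^{-1})t(x)t(y^{2})$ for all $x,y\in G$. So the entire content is the algebraic identity
\[
t(y^{-1})t(x)t(y^{2}) \;=\; t(y^{-1}xy^{2}) \qquad\Longleftrightarrow\qquad \partial t(y^{-1},x)\,\partial t(y^{-1}x,y^{2}) \;=\; 1,
\]
and I would prove this by direct manipulation of the left-hand product.

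First I would expand $\partial t(y^{-1},x)\,\partial t(y^{-1}x,y^{2})$ using the definition: this equals
\[
\bigl(t(y^{-1})t(x)t(y^{-1}x)^{-1}\bigr)\bigl(t(y^{-1}x)t(y^{2})t(y^{-1}xy^{2})^{-1}\bigr).
\]
The middle factors $t(y^{-1}x)^{-1}t(y^{-1}x)$ cancel, leaving $t(y^{-1})t(x)t(y^{2})\,t(y^{-1}xy^{2})^{-1}$. Hence $\partial t(y^{-1},x)\,\partial t(y^{-1}x,y^{2}) = 1$ if and only if $t(y^{-1})t(x)t(y^{2}) = t(y^{-1}xy^{2})$, which is exactly the condition in Proposition~\ref{s3p2}. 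Combining with that proposition gives the equivalence with being a gyro-homomorphism, and the identity-preserving hypothesis is carried along verbatim.

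There is essentially no obstacle here: the proof is a one-line telescoping cancellation in the group $G'$ together with a citation of the preceding proposition. The only point requiring a word of care is to confirm that no extra hypothesis on $t$ (beyond preserving the identity) is needed to make the cancellation valid --- it is not, since the cancellation $t(y^{-1}x)^{-1}t(y^{-1}x)=1$ holds for any map $t$ into a group. I would also remark that this reformulation is the one that makes the cocycle-style conditions of the next sections natural, so it is worth stating cleanly even though the verification is immediate.
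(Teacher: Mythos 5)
Your proof is correct and is essentially identical to the paper's: both expand $\partial t(y^{-1},x)\,\partial t(y^{-1}x,y^{2})$, observe the telescoping cancellation of $t(y^{-1}x)^{-1}t(y^{-1}x)$, and then invoke Proposition~\ref{s3p2}. Nothing further is needed.
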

\begin{proof}
Let $t$ be a map from $G$ to $G'$ which preserves identity. Then 
\begin{align*}
\partial t(y^{-1}, x)\partial t(y^{-1}x, y^{2}) = & t(y^{-1})t(x)t(y^{-1}x)^{-1}t(y^{-1}x)t(y^{2})t(y^{-1}xy^{2})^{-1} \\
 = & 1
\end{align*}
\n for all $x, y\in G$ if and only if $t(y^{-1}xy^{2})\ =\ t(y^{-1})t(x)t(y^{2})$ for all $x, y\in G$. The result follows from Proposition \ref{s3p2}.
\end{proof}

\section{Some Universal Constructions}

Let $X$ be a set and $F(X)$ be the  free group on $X$ consisting of the freely reduced words in $X$. Let $\hat{F}(X)$ denote the free group on $F(X)$ consisting of freely reduced words in $F(X)$. Usually, $\Omega$ will denote forgetful functors from a category to another category which forgets some structure.  

\begin{thm}\label{s4p1}
Let $\Omega$ denote the forgetful functor from the category $\cat{GR}$ of group based right loops to the category $\cat{RL}$ of right loops. Then there is a left adjoint to $\Omega$.
\end{thm}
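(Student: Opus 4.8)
The plan is to construct the left adjoint $\Phi$ to $\Omega$ explicitly by a presentation-type argument, exploiting the universal properties already in the paper. Given a right loop $(S,\circ)$, the unit of the adjunction should be a map $\eta_S \colon S \to \Omega(\Phi(S))$ such that every right-loop homomorphism $S \to \Omega(G,\circ_1)$ (for $G$ a group) factors uniquely through $\eta_S$. The natural candidate for $\Phi(S)$ is a quotient of a free group built on the symbols of $S$: start with the free group $F(S)$ on the underlying set of $S$, and then quotient by the normal subgroup generated by all relators that force the assignment $s \mapsto [s]$ to be a right-loop homomorphism from $(S,\circ)$ into $([s])$-words under the operation $x\circ_1 y = y^{-1}xy^2$. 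Concretely, for each pair $(a,b) \in S\times S$ one imposes the relation $[a\circ b] = [b]^{-1}[a][b]^2$, and also $[e]=1$. Call the resulting group $\Phi(S)$ and let $\eta_S(s) = [s]$.

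The key steps, in order, would be: (1) Define $\Phi$ on objects as above and check $\eta_S$ is a right-loop homomorphism $(S,\circ)\to(\Phi(S),\circ_1)$ by construction — this is immediate from the chosen relators. (2) Verify $\Phi(S)$ lies in $\cat{GR}$, i.e. that $(\Phi(S),\circ_1)$ really is a group based right loop; but this is automatic, since for \emph{any} group $G$ the right loop $(G,\circ_1)$ is a sub right loop of itself and hence an object of $\cat{GR}$ — so $\Phi(S)$ being a group suffices. (3) Define $\Phi$ on morphisms: given a right-loop homomorphism $\phi\colon S \to S'$, the composite $S \xrightarrow{\phi} S' \xrightarrow{\eta_{S'}} \Phi(S')$ is a right-loop homomorphism into $(\Phi(S'),\circ_1)$, hence by the universal property sends each relator of $F(S)$ into the trivial element, so it induces a group homomorphism $\Phi(\phi)\colon\Phi(S)\to\Phi(S')$; functoriality follows from uniqueness. (4) Prove the adjunction bijection $\mathrm{Hom}_{\cat{GR}}(\Phi(S),(G,\circ_1)) \cong \mathrm{Hom}_{\cat{RL}}(S,\Omega(G,\circ_1))$: given a right-loop homomorphism $f\colon S\to\Omega(G,\circ_1)$, the set map $s\mapsto f(s)$ extends to a group homomorphism $F(S)\to G$ by freeness; the defining relation $f(a\circ b) = f(b)^{-1}f(a)f(b)^2$, which holds precisely because $f$ is a right-loop homomorphism into $(G,\circ_1)$, shows this homomorphism kills every relator, so it descends to a unique group homomorphism $\Phi(S)\to G$ whose restriction along $\eta_S$ recovers $f$. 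Naturality in both variables then follows formally, and one invokes the standard fact that a functor with a universal arrow from each object is a left adjoint.

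The main obstacle — really the only subtle point — is step (4)'s \emph{uniqueness} clause and the verification that the correspondence is genuinely a bijection rather than merely a surjection. One must check that two group homomorphisms $\Phi(S)\to G$ agreeing on the generators $\{[s] : s\in S\}$ are equal, which holds since those generators generate $\Phi(S)$; and conversely that distinct right-loop homomorphisms $S\to\Omega(G,\circ_1)$ yield distinct extensions, which is clear since the extension restricted to $\eta_S$ is the original map. A secondary point worth stating carefully is that $\eta_S$ need not be injective: the relators may collapse elements of $S$, and this is exactly why $\Phi$ is a genuine reflection rather than an embedding — but this causes no trouble for the adjunction, only for any hoped-for "subobject" interpretation. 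I would also remark that the construction mirrors, and indeed can be phrased via, the universal group extension $G_S S$ of Theorem~3.4 of \cite{lal1996transversals} together with the characterisation of $(G,\circ_1)$ from the previous section, giving an alternative, more structural route to the same adjoint should a reader prefer it.
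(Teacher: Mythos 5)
Your construction starts the same way as the paper's---form the free group $F(S)$ on the underlying set of $S$ and impose the relators $(a\circ b)^{-1}b^{-1}ab^{2}$---but you then take the \emph{whole} quotient group $\Phi(S)=\hat{F(S)}$, viewed as the right loop $(\Phi(S),\circ_{1})$, to be the value of the left adjoint. That is the wrong object, and the gap appears exactly at the point you flag as ``the only subtle point.'' Morphisms in $\cat{GR}$ are homomorphisms of \emph{right loops}, not of groups, so a morphism $(\Phi(S),\circ_{1})\to (T,\circ_{1})$ is determined by its values on a set generating $\Phi(S)$ under $\circ_{1}$ and right division, not on a set that merely generates $\Phi(S)$ as a group. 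The image $\eta_{S}(S)=\{[s]\mid s\in S\}$ is already closed under $\circ_{1}$ (that is precisely what the relators say: $[a]\circ_{1}[b]=[b]^{-1}[a][b]^{2}=[a\circ b]$), so the sub right loop it generates is $\eta_{S}(S)$ itself, in general a proper subset of $\Phi(S)$; elements such as the group product $[a][b]$ need not lie in it. Hence your uniqueness argument (``two group homomorphisms agreeing on the generators are equal'') proves the wrong statement: two right loop homomorphisms out of $(\Phi(S),\circ_{1})$ can agree on $\eta_{S}(S)$ and differ elsewhere. Indeed $\mathrm{Hom}_{\cat{GR}}\bigl((\Phi(S),\circ_{1}),(G,\circ_{1})\bigr)$ is the set of gyro-homomorphisms $\Phi(S)\to G$, which the paper is at pains to show is in general strictly larger than the set of group homomorphisms, so your correspondence is not a bijection. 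Surjectivity also fails for a general target: an object of $\cat{GR}$ is a sub right loop $T\subseteq (G,\circ_{1})$, and while the induced group homomorphism $\overline{f}:\Phi(S)\to G$ carries $\eta_{S}(S)$, hence the sub right loop it generates, into $T$, it need not carry all of $\Phi(S)$ into $T$, so $\overline{f}$ does not even define a morphism of $\cat{GR}$ with target $T$.

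The fix is the step the paper takes and you omit: define $\Sigma(S)$ to be the sub right loop $\eta_{S}(S)=\{x\langle R\rangle\mid x\in S\}$ of $(\hat{F(S)},\circ_{1})$ rather than the whole group. Then $\Sigma(S)$ coincides with (hence is generated as a right loop by) the image of $S$, so the restriction $\overline{f}|_{\Sigma(S)}$ lands in $T$ and is the unique right loop homomorphism with $\overline{f}|_{\Sigma(S)}\circ\eta_{S}=f$; both halves of the universal property then hold, and the remainder of your argument (functoriality, naturality, the universal-arrow formulation of adjointness) goes through as written.
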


\begin{proof} We construct the adjoint functor $\Sigma$ from $\cat{RL}$ to $\cat{GR}$. Let $(S, \circ)$ be a right loop. Consider the free group $F(S)$ on $S$ consisting of freely reduced words in $S$. Let $\hat{F(S)}$ denote the group having the presentation $\langle S ; R \rangle$ where $R\ =\ \{(x\circ y)^{-1}y^{-1}xy^{2}\}$. Let $\Sigma (S)$ denote the subset $\{y^{-1}xy^{2}\langle R \rangle\mid x, y\in S\}\ =\ \{(x\circ y)\langle R \rangle\mid x, y\in S\}$. Evidently $\Sigma (S)$ is a sub right loop of $(\hat{F(S)}, \circ_1)$, and hence it is a group based right loop. Clearly, the map $i_{S}$ from $S$ to $\Sigma (S)$ given by $i_{S}(x)\ =\ x \langle R \rangle$ is a homomorphism between right loops. 

\vspace{0.2 cm}

\n Let  $f$ be a homomorphism from $(S,\circ)$ to a group based right loop $(T, \circ_1)\subset (G, \circ_1)$.  From the universal property of a free group, we have a unique group homomorphism $\hat{f}$ from $F(S)$ to $G$ such that $\hat{f}(x)\ =\ f(x)$ for each $x\in S$. Since $f(x \circ y)\ =\ f(y)^{-1}f(x)f(y)^{2}$, $\hat{f}(x\circ y)\ =\ \hat{f}(y)^{-1}\hat{f}(x)\hat{f}(y)^{2}$ for all $x, y\in S$. This means that $R$ is contained in the kernel of $\hat{f}$. In turn, we have a unique group homomorphism $\overline{f} $ from $\hat{F(S)}$ to $G$. Evidently, $\overline{f}(\Sigma (S))\subseteq T$ and  $\overline{f}|_{\Sigma (S)}$ is the unique  homomorphism from $\Sigma (S)$ to $(T, \circ_1) $  such that $ \overline{f}|_{\Sigma (S)}\circ i_{S}\ =\ f$.

\vspace{0.2 cm} 

\n Next, let  $(S', \circ')$ be a right loop and $f$ be homomorphism from $(S, \circ)$ to $(S', \circ')$. Then $i_{S'}\circ f$ is a homomorphism from  $(S, \circ)$ to the group bases right loop $\Sigma (S')$, where $i_{S'}$ is the universal map  described in the above paragraph. Again from the universal property of $\Sigma (S)$ as described above, we have a unique homomorphism $\Sigma (f)$ from $\Sigma (S)$ to $\Sigma (S')$ such that $i_{S'}\circ f\ =\ \Sigma (f)\circ i_{S}$. Thus, $\Sigma$ defines a functor from the category $\cat{RL}$ to $\cat{GR}$. Finally, we need to show that the bi-functors $Mor (- , \Omega (-))$ and $Mor (\Sigma (-), -)$ from $\cat{RL\times GR}$ to the category $\cat{SET}$ of sets  are naturally isomorphic.  It follows from the above discussions that for each $(S, T)\in \cat{RL\times GR}$, we have the bijective map $\eta_{S, T}$ from $Mor (S, \Omega (T))$ to $Mor (\Sigma (S), T)$ given by $\eta_{S, T}(f)\ =\ \overline{f}|_{\Sigma (S)}$. The fact that $\eta\ =\ \{\eta_{S, T}\mid (S, T)\in Obj(RL)\times Obj (GR)\}$ is a natural isomorphism is an easy observation.
\end{proof}

\n Now, we construct free objects in the category $\cat{GR}$ of group based right loops. Let $X$ be a set. Consider the free group $F(X)$ on the set $X$ consisting of freely reduced words in $X$. If $W$ is a word in $X$, then $\overline{W}$ denotes the word in $X$ obtained by freely reducing $W$.  We define subsets $A_{n}, n\geq 0$ of $F(X)$ inductively as follows. Put $A_{0}$ to be the singleton $\{\overline{\emptyset}\ =\ 1\}$ consisting of the empty word representing the identity. Let $A_{1}\ =\ \{\overline{x^{\pm 1}}\mid x\in X\}$ be the set consisting of reduced words of length 1. Supposing that $A_{n}$ has already been defined, define $A_{n+1}\ =\ \{\overline{\overline{U}^{-1}\overline{V}\ \overline{U}^{2}}\mid \overline{U}, \overline{V}\in \bigcup_{i=0}^{n}A_{i}\}$. Evidently, $FR(X)\ =\ \bigcup_{i=1}^{\infty}A_{i}$ is a sub right loop of $(F(X), \circ_1)$ generated by $X$. The map $i$ from $X$ to $FR(X)$ given by $i(x)\ =\ \overline{x}$ is injective and the pair $(FR(X), i)$ is universal in the sense that if $j$ is a map from $X$ to a group based right loop $(T, \circ_1)\subseteq (G, \circ_1)$, then there is a unique homomorphism $\overline{j}$ from $FR(X)$ to $T$ such that $\overline{j}\circ i\ =\ j$. It follows that $FR$ defines a functor from the category $\cat{SET}$ of sets to the category $\cat{GR}$ which is adjoint to the forgetful functor $\Omega$. We shall term the $(FR(X) , i)$ as the free group based right loop on $X$. A pair $\langle X ; R \rangle$ together with a surjective homomorphism $f$ from $FR(X)$ to $(T, \circ_1)$ will be termed as a presentation of $T$ if the kernel of $f$ is the normal sub right loop of $FR(X)$ generated by $R$. Every group based right loop $(S, \circ_1)$ has the standard multiplication presentation induced by the obvious surjective homomorphism from $FR(S)$ to $S$. The cyclic group $\langle x \rangle$ considered as a group based right loop has a presentation $\langle\{x\} ; \emptyset \rangle$ and it is the universal free object in $\cat{GR}$. If $S$ and $T$ are group based right loops having presentations $\langle X; R \rangle$ and $\langle Y; S\rangle$, then the group based right loop  having the presentation $\langle X\bigcup Y; R\bigcup S \rangle$ is called the free product of $S$ and $T$, where $X\bigcup Y$ is taken as the disjoint union of $X$ and $Y$. Clearly, free objects in $\cat{GR}$ are free products of certain copies of universal free objects.

\vspace{0.2 cm}

\n Let $K$ be a group. Let $\langle K; R_{K} \rangle$ denote the standard multiplication presentation of $K$ and $\check{K}$ denotes  the group having the presentation $\langle K; \check{S}_{K} \rangle$, where $\check{S}_{K}$ is the set of words in $K$ of the type

\begin{center}
$(y^{-1}xy^{2})^{-1}\star y^{-1}\star x\star y^{2}$,
\end{center}

\n $x, y\in K -\{e\}$. Here the juxtaposition denotes the operation in the group $K$ and $\star$ denotes the operation in the free group $F(K)$ on $K$. More explicitly, $K\approx F(K)/\langle R_{K} \rangle$, where $\langle R_{K} \rangle$ is the normal subgroup of $F(K)$ generated by the set $R_{K}\ =\ \{(xy)^{-1}\star x\star y\mid x, y\in K\}$ and $\check{K}\approx F(K)/ \langle \check{S}_{K} \rangle$ where $\check{S}_{K}\ =\ \{(y^{-1}xy^{2})^{-1}\star y^{-1}\star x\star y^{2}\mid x, y\in K\}$. Clearly, $\langle \check{S}_{K} \rangle\subseteq \langle R_{K} \rangle$ and hence we have the surjective group homomorphism $\nu_K$ from $\check{K}$ to $K$ given by $\nu_K(x\langle \check{S}_K \rangle)=x \langle R_K \rangle$. The map $t_{K}$ from $K$ to $\check{K}$ given by $t_{K}(x)\ =\ x\langle\check{S}_{K} \rangle$ is an injective  gyro-homomorphism and $t_{K}(x^{n})\ =\ (t_{K}(x))^{n}$. If $f$ is a gyro-homomorphism from $K$ to a group $G$, then the map $\check{f}$ from $\check{K}$ to $G$ given by $\check{f}(x\langle \check{S}_K \rangle)=f(x)$ is the unique group homomorphism from $\check{K}$ to $G$ such that  $\check{f}\circ t_K=f$. Thus, the pair $(\check{K} , t_{K} )$ is universal in the sense that given any group $G$ and a gyro-homomorphism $f$ from $K$ to $G$, there is a unique group homomorphism $\check{f}$ from $\check{K}$ to $G$ such that $\check{f}\circ t_{K}\ =\ f$. Note that $f \circ \nu_{K}\circ t_{K}\ =\ f$ but $f\circ \nu_{K}$ need not be $\check{f}$ as it need not be a group  homomorphism (see Example \ref{exm1}). It also follows that the association $K\mapsto \check{K}$ defines a functor from the category $\cat{GP}$ to $\cat{\hat{GP}}$ which is adjoint to the forgetful functor, where $\cat{\hat{GP}}$ is a category whose objects are groups and the morphisms are gyro-homomorphisms. 

\vspace{0.2 cm}

\n Let $\check{R}_{K}\ =\  \langle R_{K} \rangle /\langle \check{S}_{K} \rangle$ and $\check{K}\ =\ F(K)/\langle \check{S}_{K} \rangle$. Then, we have the following short exact sequence

\begin{equation}\label{eqex}
1\longrightarrow \check{R}_{K}  \longrightarrow \check{K} \longrightarrow K\longrightarrow 1
\end{equation}

\n of groups having a section $t_{K}$ which is a gyro-homomorphism.

\vspace{0.2 cm}

\n More generally, let $\langle X; S \rangle$ be an arbitrary presentation of $K$. Consider the free group $F(F(X))$ on $F(X)$. We have a  surjective group homomorphism $\eta$ from $F(F(X))$ to $F(X)$ given by $\eta (W_{1}\star W_{2}\star\cdots \star W_{r})\ =\ W_{1}W_{2}\cdots W_{r}$, and $\langle F(X); \hat{S} \rangle$ is also a presentation of $K$, where $\hat{S}\ =\ \{W_{1}\star W_{2}\star\cdots \star W_{r}\mid W_{1}W_{2}\cdots W_{r}\in S\}$. Let $\check{T}$ denote the subset $\{(\eta (U^{-1}\star V\star U^{2})^{-1}\star U^{-1}\star V\star U^{2}\mid U, V\in F(X)\}$ of $F(F(X))$. It can be observed that $\langle \check{T} \rangle\subseteq \langle \hat{S} \rangle$. Consequently, we obtain a short exact sequence

\begin{equation}\label{eqex1}
1 \longrightarrow \langle \hat{S}  \rangle /\langle \check{T} \rangle\longrightarrow F(F(X))/\langle \check{T} \rangle\longrightarrow K\longrightarrow 1
\end{equation}

\n of groups which is equivalent to (\ref{eqex}). Indeed, if $\mu$ is the surjective homomorphism from $F(X)$ to $K$ given by the presentation $\langle X ; S \rangle$ of $K$, then it further induces a surjective group homomorphism $\check{\mu}$ from $F(F(X))$ to $F(K)$. It can be easily observed that $\check{\mu}(\langle\check{T} \rangle)\ =\ \langle \check{S}_K \rangle$. In turn, $\check{\mu}$ induces an isomorphism $\rho$ from $F(F(X))/\langle \check{T} \rangle$ to $\check{K}$ such that $(\rho^{-1} |_{\check{R}_{K}} , \rho^{-1} , I_{K} )$ is an equivalence from $(\ref{eqex})$ to $(\ref{eqex1})$.  In particular, $\check{K}\approx F(F(X))/\langle \check{T} \rangle$ and $\langle \hat{S}  \rangle /\langle \check{T} \rangle\approx \check{R}_{K}$. It follows that $F(F(X))/\langle \check{T} \rangle$ and $\langle \hat{S}  \rangle /\langle \check{T} \rangle$ are independent (up to isomorphism) of the presentation and they depend only on the group $K$. The associations $K\mapsto \check{K}$ and $K\mapsto \check{R}_{K}$ define functors from $\cat{GP}$ to itself which are universal in the sense already described.

The group $\check{R}_{K}$ can be thought of as the obstruction for gyro-homomorphisms from $K$ to be group homomorphisms. We also term it as a gyro-multiplier of $K$.
 
\begin{exm}
If $G$ is a cyclic group, then it is evident that $\check{G}\approx G$. Let $G$ be an elementary abelian 2-group. Then  $\check{G}$ has the presentation $ \langle G; \check{S}_{G} \rangle$, where $\check{S}_{G}\ =\ \{(y^{-1}xy^{2})^{-1}\star y^{-1}\star x\star y^{2}\mid x, y\in G-\{e\}\}\ =\ \{(yx)^{-1}\star (y\star x)\mid x, y\in G\}\ =\ R_{G}$. Thus, in this case also  $\check{G}\approx G$. Consider the quaternion group $Q_{8}\ =\ \{\pm 1, \pm i, \pm j, \pm k\}$. Evidently, $(j^{-1}ij^{2})^{-1}\star j^{-1}\star i\star j^{2}\ =\ (ji)^{-1}\star (j\star i)$ and so on. Indeed, $\check{S}_{Q_{8}}\ =\ R_{Q_{8}}$. Consequently, $\check{Q_{8}}\approx Q_{8}$ and $Q_{8}$ is gyro-isomorphic to itself.
\end{exm}

\begin{exm}\label{exm1}
Consider $G\ =\ \Z_{3}\times \Z_{3}\times \Z_{3}$. Since $G$ is of exponent 3, $\check{G}$ is also of exponent 3. Since $\check{G}$ is finitely generated, it is finite.  We show that $\check{G}$ is   non-abelian group. Let $E$ denote the non-abelian group of order $3^3$ which is of exponent 3. Since $E$ is nilpotent group of class 2 and of exponent 3, $(E, \circ_{1} )$ is an abelian group of exponent 3 and so it is isomorphic to  $ \Z_{3}\times \Z_{3}\times \Z_{3}$ as a group. In particular, we have a gyro-isomorphism $\eta$   from $G$ to $E$. From the universal property of $(\check{G}, t_{G})$, we get a surjective group homomorphism $\check{\eta}$ from $\check{G}$ to $E$ such that $\check{\eta}\circ t_{G}\ =\ \eta$. Since $E$ is non-abelian, $\check{G}$ is non-abelian. Again, since $G$ is abelian, $\check{R}_{G}$ contains the commutator $[ \check{G}, \check{G}]$ of $\check{G}$. Evidently, $\eta\circ \nu_{G}$ is not a group homomorphism as $(\eta\circ \nu_{G})^{-1}(\{1\})\ =\ \nu_{G}^{-1}(\{1\})\ =\ \check{R}_{G}\supseteq [ \check{G}, \check{G}]$ and $E$ is non-abelian. Note that $\eta\circ \nu_{G}\circ t_{G}\ =\ \eta$.   
\end{exm}

\begin{rmk}
From the Example \ref{exm1}, one observes that for the groups $G_1$ and $G_2$, $\check{(G_1\times G_2)}$ need not be isomorphic to $\check{G_1}\times \check{G_2}$. One can also observe that if $G_1$ is gyro-isomorphic to $G_2$, then $\check{G_1}$ is isomorphic to $\check{G_2}$ as groups. Even if  $\check{G_1}$ is isomorphic to $\check{G_2}$ as groups, then $G_1$ need not be gyro-isomorphic to $G_2$.
\end{rmk}


\begin{exm}
If $K$ is a free group on at least two generators, then it can be easily observed that the gyro-multiplier $\check{R}_{K}$ of $K$ is non-trivial, and  $t_{K}$ is gyro-homomorphism which is not a group homomorphism. 
\end{exm}


\section{Gyro-split extensions}

\begin{df}
A short exact sequence 
\begin{center}
$1\longrightarrow H\stackrel{\alpha}{\longrightarrow} G\stackrel{\beta}{\longrightarrow}K\longrightarrow 1$
\end{center}
of groups is called a gyro-split extension if there is a section $t$, also called a gyro-splitting, from $K$ to $G$ which is a gyro-homomorphism.
\end{df}

\n Evidently, a split extension is a gyro-split extension. However, a gyro-split extension need not be a split extension.

\begin{exm}
Consider the non-abelian group $E$ of order $3^{3}$ which is of exponent 3. Then $(E, \circ_1)$ is an elementary abelian 3-group and the identity map from $E$ to $(E, \circ_1)$ is a gyro-isomorphism. Consider the central extension
\begin{center}
$0\longrightarrow Z(E)\stackrel{i}{\longrightarrow} E\stackrel{\nu}{\longrightarrow}\Z_{3}\times \Z_{3}\longrightarrow 0$
\end{center}
of $\Z_{3}$ by $\Z_{3}\times \Z_{3}$. Evidently, it is not a split extension. However, there is a sub right loop $L$ of $(E, \circ_1)$ of order $3^2$ such that $E\ =\ Z(E)L$, and the map $\nu |_{L}$ is an isomorphism from $(L, \circ_1 )$ to $\Z_{3}\times \Z_{3}$. Indeed, there are $3^{2} + 3 + 1\ =\ 13$ subgroups  of $(E, \circ_1)\approx \Z_{3}^{3}$ of order $3^2$, whereas there are 4 subgroups of $E$ of order $3^2$. If $L$ is a subgroup $(E, \circ_1)$ of order $3^2$ which is not a subgroup of $E$, then  $L\bigcap Z(E)\ =\ \{1\}$. Consequently, $E\ =\ Z(E)L$ and the map $\nu |_{L}$ is an isomorphism from $(L, \circ_1)$ to $\Z_{3}\times \Z_{3}$. Evidently, $(\nu |_{L})^{-1}$ is a gyro-splitting.
\end{exm}

\begin{exm}
Let $K$ be an arbitrary field. Consider the unipotent group $U(3, K)$ of unipotent upper triangular $3\times 3$ matrices with entries in the field $K$. Then $U(3, K)$ is a nilpotent group of class 2. Thus, $(U(3, K), \circ_1)$ is a nilpotent group of class at most 2. Let $U(a_{1}, a_{2}, a_{3})$ denote the unipotent upper triangular $3\times 3$ matrix for which $a_{12}\ =\ a_{1},\ a_{13}\ =\ a_{2}$ and $a_{23}\ =\ a_{3}$. It can be easily observed that
 
\begin{center}
$U(b_{1}, b_{2}, b_{3})^{-1}U(a_{1}, a_{2}, a_{3})U(b_{1}, b_{2}, b_{3})^{2}\ =\ U(a_{1} + b_{1}, b_{2} + 2a_{1}b_{3} - b_{1}a_{3} + a_{2}, b_{3} + a_{3})$.
\end{center}
Thus, $(U(3, K), \circ_1)$ is isomorphic to the group $(K^{3}, \cdot )$, where the product $\cdot$ is given by
\begin{center}
$(a_{1}, a_{2}, a_{3})\cdot (b_{1}, b_{2}, b_{3})\ =\ (a_{1} + b_{1}, b_{2} + 2a_{1}b_{3} - b_{1}a_{3} + a_{2}, b_{3} + a_{3})$.
\end{center}
Evidently, $(U(3, K), \circ_1)$ is an algebraic group defined over the prime field of $K$. Further, $(U(3, K), \circ_1)$ is abelian if and only if the characteristic of $K$ is 3. Consider $U(3, \Z_{p} )$, where  $p$ is an odd prime different from 3. Then $U(3, \Z_{p})$ is a non abelian group of order $p^{3}$ and $(U(3, \Z_{p}), \circ_1)$ is also a non abelian group of order $p^{3}$ whose exponent is the same as that of $U(3, \Z_{p})$. It follows that $U(3, \Z_{p})$ is isomorphic to $(U(3, \Z_{p}), \circ_1)$. In other words $U(3, \Z_{p})$ is gyro-isomorphic to itself. Consequently, any gyro-split extension by $U(3, \Z_{p})$ is a split extension.  Further, note that  
\begin{center}
$0\longrightarrow Z(U(3, \Z_{p}))\stackrel{i}{\longrightarrow} U(3, \Z_{p})\stackrel{\nu}{\longrightarrow}\Z_{p}\times \Z_{p}\longrightarrow 0$
\end{center}
is not gyro-split.
\end{exm}

Using the universal property of the functor $G\mapsto \check{G}$, we can easily establish the following proposition:

\begin{prp}\label{s5p1}
To each short exact sequence of groups
\begin{center}
$E\equiv 1\longrightarrow H\stackrel{\alpha }{\longrightarrow}G\stackrel{\beta}{\longrightarrow}K\longrightarrow 1$,
\end{center}
 we have the following commutative diagram

\[
\begin{tikzcd}
              & 1 \arrow[d]                         & 1\arrow[d]                           & 1 \arrow[d]                     &  \\
  1 \arrow[r] & Ker\check{\beta}_{R} \arrow[d, "i"] \arrow[r, "i"] & \check{R_G} \arrow[d, "i_G"] \arrow[r, "\check{\beta_R}"] & \check{R_K} \arrow[d, "i_K"]  \\
  1 \arrow[r] & Ker \check{\beta} \arrow[d, "\nu"] \arrow[r, "i"]  & \check{G} \arrow[d, "\nu_G"] \arrow[r, "\check{\beta}"]   & \check{K} \arrow[d, "\nu_K"] \arrow[r] & 1 \\
  1 \arrow[r] & H  \arrow[r, "\alpha"]                             & G \arrow[d] \arrow[r, "\beta"]                            & K \arrow[d] \arrow[r] & 1 \\
	                                                                 &                                                          & 1 & 1 &                  
\end{tikzcd}
\]
 
where the rows and the columns are exact. Further, if the bottom row is gyro-split, then the middle row is split exact sequence.

\end{prp}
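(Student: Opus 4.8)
The plan is to read off the whole diagram from functoriality of $K\mapsto\check K$ together with the two defining short exact sequences $1\to\check R_G\to\check G\xrightarrow{\nu_G}G\to1$ and $1\to\check R_K\to\check K\xrightarrow{\nu_K}K\to1$, and then to produce the splitting of the middle row directly from the universal property of $(\check K,t_K)$.

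\emph{Constructing the diagram.} Applying the universal property of $(\check G,t_G)$ to the gyro-homomorphism $t_K\circ\beta\colon G\to\check K$ yields the unique \emph{group} homomorphism $\check\beta\colon\check G\to\check K$ with $\check\beta\circ t_G=t_K\circ\beta$. Comparing the two group homomorphisms $\nu_K\circ\check\beta$ and $\beta\circ\nu_G$ on the generating set $t_G(G)$ of $\check G$ gives the relation $\nu_K\circ\check\beta=\beta\circ\nu_G$. From this relation one reads off immediately that $\check\beta$ carries $\check R_G=\ker\nu_G$ into $\check R_K=\ker\nu_K$ (giving $\check\beta_R$) and that $\nu_G$ carries $Ker\check\beta$ into $\ker\beta=\operatorname{im}\alpha\cong H$ (giving the left vertical $\nu$); set $Ker\check\beta_R=Ker\check\beta\cap\check R_G$. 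Commutativity of all squares is then formal. For exactness: the two right-hand columns are short exact by the definition of $\check R_{(-)}$, the bottom row is the given sequence, and $\check\beta$ is surjective because $t_K(K)=t_K(\beta(G))=\check\beta(t_G(G))$ generates $\check K$; exactness of the top row and of the left column at their populated spots is just the tautology $Ker\check\beta_R=\ker\check\beta_R=\ker(\nu|_{Ker\check\beta})$. A one-line chase even promotes these to short exact sequences: $h\mapsto t_G(\alpha(h))$ lands in $Ker\check\beta$ and maps to $h$, so $\nu|_{Ker\check\beta}$ is onto $H$; and given $r\in\check R_K$ one picks a $\check\beta$-preimage $x$, notes $\nu_G(x)=\alpha(h)\in\ker\beta$, and replaces $x$ by $t_G(\alpha(h))^{-1}x\in\check R_G$ to see $\check\beta_R$ is onto.

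\emph{Splitting the middle row.} Suppose the bottom row is gyro-split, say $s\colon K\to G$ is a gyro-homomorphism with $\beta\circ s=\mathrm{id}_K$. Since $t_G$ is a gyro-homomorphism and composites of gyro-homomorphisms are gyro-homomorphisms (they are morphisms in $\cat{\hat{GP}}$), $t_G\circ s\colon K\to\check G$ is a gyro-homomorphism, so the universal property of $(\check K,t_K)$ furnishes a unique group homomorphism $\sigma\colon\check K\to\check G$ with $\sigma\circ t_K=t_G\circ s$. Now $\check\beta\circ\sigma$ and $\mathrm{id}_{\check K}$ are group homomorphisms that agree on the generating set $t_K(K)$ of $\check K$, because $\check\beta(\sigma(t_K(k)))=\check\beta(t_G(s(k)))=t_K(\beta(s(k)))=t_K(k)$; hence $\check\beta\circ\sigma=\mathrm{id}_{\check K}$, so $\sigma$ is a group-theoretic section of $\check\beta$ and the middle row is split exact.

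\emph{Where the only care is needed.} Essentially everything is bookkeeping around the universal properties; the one point worth flagging is that one must \emph{not} attempt to split $\check\beta$ by lifting $s$ pointwise (that set-lift is not a group homomorphism), but instead pass through $(\check K,t_K)$ to get an honest group homomorphism $\sigma$ and then verify the section identity on the generators $t_K(K)$. The surjectivity assertions needed to make the top row and left column short exact (rather than merely exact) require the small diagram chases sketched above, and these are the only mildly nonformal steps.
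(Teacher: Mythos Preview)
Your proof is correct and tracks the paper's argument closely: construct $\check\beta$ from the universal property of $(\check G,t_G)$ applied to $t_K\circ\beta$, verify $\nu_K\circ\check\beta=\beta\circ\nu_G$ on the generating set $t_G(G)$, deduce surjectivity of $\check\beta$ from the fact that $t_K(K)$ generates $\check K$, and for the splitting apply the universal property of $(\check K,t_K)$ to the gyro-homomorphism $t_G\circ s$ to obtain a group homomorphism $\sigma$ (the paper's $\check t$) with $\check\beta\circ\sigma=\mathrm{id}_{\check K}$, checked on generators.

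The one genuine divergence is your extra paragraph promoting the top row and left column to \emph{short} exact sequences. The statement does not assert surjectivity of $\check\beta_R$ or of $\nu$ (note the missing terminal $1$'s in the diagram), and the paper in fact remarks in its proof that these two maps ``need not be surjective.'' Your diagram chases, however, look sound: $t_G(\alpha(h))$ visibly lies in $Ker\check\beta$ and hits $h$ under $\nu$, and the corrected lift $t_G(\alpha(h))^{-1}x$ lands in $\check R_G$ while still mapping to $r$ under $\check\beta$. So on this side point you prove more than the proposition claims, and indeed more than the paper concedes; this does not affect the correctness of your proof of the stated result.
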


\begin{proof} Consider the right most gyro-split  vertical exact sequence. We have the gyro-splitting $t_{K}$ from $K$ to $\check{K}$, and $t_{K}\circ \beta$ is a gyro-homomorphism from $G$ to $\check{K}$. From the universal property of the pair $(\check{G},t_G)$, we have a unique group homomorphism  $\check{\beta}$ from $\check{G}$ to $\check{K}$ such that $\check{\beta}\circ t_G=t_K\circ \beta$. In turn,
\begin{center} 
$\nu_{K}\circ \check{\beta}\circ t_{G}\ =\ \nu_{K}\circ t_{K}\circ \beta\ =\ \beta\ =\ \beta\circ \nu_{G}\circ t_{G}$.
\end{center}
Since $\nu_{K}\circ \check{\beta}$ and $\beta\circ \nu_{G}$ are group homomorphisms from $\check{G}$ to $K$ and $\beta$ is a gyro-homomorphism (being a group homomorphism), it follows from the universal property of $(\check{G} , t_{G})$ that $\nu_{K}\circ \check{\beta}\ =\ \beta\circ \nu_{G}$. Thus the lower right square is commutative. Further, since $t_{K}(K)$ generates $\check{K}$ as a group and $\beta$ is surjective, it follows that $\check{\beta}$ is surjective.   Evidently, the diagram is commutative, all the rows and the last two columns are exact. The exactness of the first column also follows by chasing the diagram. Note that $\nu$ and $\check{\beta}_{R}$ need not be surjective. 

Finally, suppose that the bottom row is gyro-split with $t$ as gyro-splitting. Then $t_{G}\circ t$ is a gyro-homomorphism from $K$ to $\check{G}$. From the universal property of $(\check{K} , t_{K} )$, we have a unique group homomorphism $\check{t}$ from $\check{K}$ to $\check{G}$ such that $\check{t}\circ t_{K}\ =\ t_{G}\circ t$. In turn,
\begin{center}
$\check{\beta}\circ \check{t}\circ t_{K}\ =\ \check{\beta}\circ t_{G}\circ t\ =\ t_{K}\circ \beta \circ t\ =\ t_{K}\ =\ I_{\check{K}}\circ t_{K}$.
\end{center}
It follows from the universal property of $(\check{K} , t_{K} )$ that $\check{\beta}\circ \check{t}\ =\ I_{\check{K}}$.
\end{proof}

\begin{rmk}
Since $t_{G}|_{Ker \beta}$ is a gyro-homomorphism from $Ker \beta\ =\ im( \alpha )$ to $\langle t_{G}(Ker \beta ) \rangle\subseteq Ker\check{\beta}$, we have a unique group homomorphism $\check{\alpha}$ from $\check{H}$ to $\check{G}$ such that $ \check{\alpha}\circ t_{Ker\beta}\ =\ t_{G}|_{Ker \beta}$. Evidently, $im( \check{\alpha})\subseteq Ker\check{\beta}$. However, the equality need not hold. In turn, we get a natural invariant $inv (E)\ =\ Ker\check{\beta}/im( \check{\alpha})$ associated to the extension $E$.
\end{rmk}

\n Let $\cat{GEXT}$ denote the category whose objects are gyro-split extensions and a morphism from a gyro-split extension
\begin{center}
$E\equiv 1\longrightarrow H\stackrel{\alpha }{\longrightarrow}G\stackrel{\beta}{\longrightarrow}K\longrightarrow 1$
\end{center}
to a gyro-split extension 
\begin{center}
$E'\equiv 1\longrightarrow H'\stackrel{\alpha '}{\longrightarrow}G'\stackrel{\beta '}{\longrightarrow}K'\longrightarrow 1$
\end{center}
is a triple $(\lambda , \mu , \nu )$, where $\lambda$ is a group homomorphism from $H$ to $H'$, $\mu$ is a  group homomorphism from $G$ to $G'$ and $\nu$ is a gyro-homomorphism from $K$ to $K'$ such that  the corresponding diagram is commutative. The composition of morphisms is obvious. Observe that in this context the short five lemma also holds. Thus, $(\lambda , \mu , \nu )$ is an equivalence if and only if $\lambda$ and $\nu$ are bijective. 

\begin{thm}\label{s5p2}
The gyro-split extension described in (\ref{eqex}), section 4 is a free gyro-split extension by  $K$ in the sense that if 
\begin{center}
$E\equiv 1\longrightarrow H\stackrel{\alpha }{\longrightarrow}L\stackrel{\beta}{\longrightarrow}K' \longrightarrow 1$
\end{center}
is a gyro-split extension by $K'$ and $\eta$ a group homomorphism from $K$ to $K'$, then there is a unique pair $(\lambda,\mu)$ of group homomorphisms such that the triple $(\lambda , \mu , \eta )$ is a morphism from the extension (\ref{eqex}) to $E$.
\end{thm}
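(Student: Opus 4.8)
The plan is to construct $\mu$ first, using the universal property of the pair $(\check{K},t_K)$ established in Section~4, and then to read off $\lambda$ by restriction, in the same spirit as the proof of Proposition~\ref{s5p1}. Since $E$ is gyro-split, fix a gyro-splitting $s\colon K'\to L$, so that $\beta\circ s=I_{K'}$ and $s$ is a gyro-homomorphism. The given $\eta$ is a group homomorphism, hence a gyro-homomorphism, and a composite of right-loop homomorphisms is again a right-loop homomorphism; therefore $s\circ\eta\colon K\to L$ is a gyro-homomorphism. By the universal property of $(\check{K},t_K)$ there is a unique group homomorphism $\mu\colon\check{K}\to L$ with $\mu\circ t_K=s\circ\eta$.

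Next I would check that the two squares of the morphism diagram commute. For the right-hand square, $\beta\circ\mu$ and $\eta\circ\nu_K$ are group homomorphisms $\check{K}\to K'$, and composing with $t_K$ gives $\beta\circ\mu\circ t_K=\beta\circ s\circ\eta=\eta$ and $\eta\circ\nu_K\circ t_K=\eta$, using that $\nu_K\circ t_K=I_K$; since $\eta$ is a gyro-homomorphism, the uniqueness clause in the universal property of $(\check{K},t_K)$ forces $\beta\circ\mu=\eta\circ\nu_K$ (equivalently, one notes that $t_K(K)$ generates $\check{K}$). As $\check{R}_K=\ker\nu_K$ by the exact sequence~(\ref{eqex}), it follows that $\beta(\mu(r))=\eta(\nu_K(r))=e$ for all $r\in\check{R}_K$, so $\mu$ maps $\check{R}_K$ into $\ker\beta=\operatorname{im}\alpha$. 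Since $\alpha$ is injective, $\lambda:=\alpha^{-1}\circ(\mu|_{\check{R}_K})$ is a well-defined group homomorphism $\check{R}_K\to H$ with $\alpha\circ\lambda=\mu\circ i$, where $i\colon\check{R}_K\hookrightarrow\check{K}$ is the inclusion. Hence the left-hand square commutes as well, and $(\lambda,\mu,\eta)$ is a morphism from~(\ref{eqex}) to $E$.

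It remains to prove uniqueness, and this is the step I expect to be delicate. Let $(\lambda',\mu',\eta)$ be any morphism from~(\ref{eqex}) to $E$. A priori a competing $\mu'$ can differ from $\mu$ by arbitrary values in $\ker\beta$ on the generating set $t_K(K)$ of $\check{K}$, so the equalities $\beta\circ\mu'=\beta\circ\mu=\eta\circ\nu_K$ alone do not pin $\mu'$ down; what removes the ambiguity is compatibility of a morphism of gyro-split extensions with the distinguished gyro-splittings (the section $t_K$ for~(\ref{eqex}) and the fixed $s$ for $E$), which forces $\mu'\circ t_K=s\circ\eta$. Granting this, the uniqueness part of the universal property of $(\check{K},t_K)$ gives $\mu'=\mu$, and then $\alpha\circ\lambda'=\mu'\circ i=\mu\circ i=\alpha\circ\lambda$ together with injectivity of $\alpha$ yields $\lambda'=\lambda$. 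Thus $(\lambda,\mu)$ is the unique such pair, which is exactly the asserted universality of the extension~(\ref{eqex}).
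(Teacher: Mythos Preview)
Your existence argument is essentially identical to the paper's: fix a gyro-splitting $s$ of $E$, note that $s\circ\eta$ is a gyro-homomorphism, invoke the universal property of $(\check{K},t_K)$ to get $\mu$, verify $\beta\circ\mu=\eta\circ\nu_K$ via the same universal property, and read off $\lambda$ by restricting $\mu$ to $\check{R}_K$. No difference there.

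Your instinct about uniqueness is well-founded, but your proposed resolution is not supported by the paper's framework. The definition of morphism in $\cat{GEXT}$ (given just before Theorem~\ref{s5p2}) requires only that $(\lambda,\mu,\nu)$ make the extension diagram commute; it does \emph{not} require compatibility with any distinguished gyro-splittings. So the step ``compatibility of a morphism of gyro-split extensions with the distinguished gyro-splittings \dots\ forces $\mu'\circ t_K=s\circ\eta$'' is an assumption you have added, not something you can grant. Without it, your own analysis shows that a competing $\mu'$ can differ from $\mu$ on $t_K(K)$ by values in $\ker\beta$, and nothing in the bare morphism condition rules this out.

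In fact the paper itself concedes this: in Section~6, when restating the freeness of $E_K$, it says explicitly that the pair $(\lambda,\mu)$ is ``not necessarily unique.'' The paper's proof of Theorem~\ref{s5p2} establishes only that $\mu$ is unique \emph{subject to} $\mu\circ t_K=s\circ\eta$ (for the chosen $s$), and that $\lambda$ is then uniquely determined by $\mu$; it does not prove global uniqueness of the pair among all morphisms $(\lambda,\mu,\eta)$. So the honest reading is that the uniqueness clause in the theorem is relative to the choice of gyro-splitting $s$, and your attempt to prove more than that cannot succeed with the definitions as given.
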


\begin{proof} Let $s$ be a gyro-splitting of $E$. Then $s\circ \eta$ is a gyro-homomorphism from $K$ to $L$. From the universal property of $(\check{K} , t_{K} )$ we get a unique group homomorphism $\mu$ from $\check{K}$ to $L$ such that $\mu\circ t_{K}\ =\ s \circ \eta $. Hence
\begin{center}
$\beta \circ \mu \circ t_{K}\ =\ \beta\circ s \circ \eta\ =\ \eta\ =\ \eta \circ \nu_{K} \circ t_{K}$.
\end{center}
Since $\eta \circ \nu_{K}$ is a group homomorphism, it follows from the universal property of $(\check{K} , t_{K} )$ that $\beta\circ \mu\ =\ \eta \circ \nu_{K}$.
 Also $\beta \circ \mu \circ i\ = \eta \circ \nu_{K} \circ i\ =\ 0$, where $i$ is the inclusion from $\check{R}_{K}$ to $\check{K}$. Consequently, there is a unique group homomorphism $\lambda$ from $ \check{R}_{K} $ to $H$ such that $(\lambda , \mu , \eta )$ is a morphism in $\cat{GEXT}$. 
\end{proof}

\n Let
\begin{center}
$E\equiv 1\longrightarrow H\stackrel{\alpha }{\longrightarrow}G\stackrel{\beta}{\longrightarrow}K\longrightarrow 1$
\end{center}
be a gyro-split extension and $t$ be a gyro-splitting of $E$. We have the corresponding factor system $(K, H, \sigma^{t},f^{t})$, where $f^{t}$ is the map from $K\times K$ to $H$ given by $t(x)t(y)\ =\ \alpha (f^{t}(x, y))t(xy)$ and $\sigma^{t}$ is the map from $K$ to $Aut (H)$ given by $\alpha (\sigma^{t}(x)(h))\ =\ t(x)\alpha (h)t(x)^{-1}$. We denote $\sigma^{t}(x)$ by $\sigma^{t}_{x}$. Further, since $t$ is a gyro-homomorphism, $\sigma^{t}$ is a gyro-homomorphism (note that it need not be a group homomorphism) and

\begin{equation}\label{eq1}
f^{t}(y^{-1},x)f^{t}(y^{-1}x, y^{2})\ =\ 1\ =\ \sigma^t_{y^{-1}}(f^t(x, y^{2}))f^t(y^{-1}, xy^{2})
\end{equation}
 
\n for all $x, y\in K$. In particular $f^t(y, y^{-1})\ =\ 1$ for all $y\in K$. This prompts us to have the following definition:
\begin{df}
A factor system $(K, H, \sigma, f)$ will be called a gyro-factor system if $\sigma$ is a gyro-homomorphism from $K$ to $Aut (H)$ and $f$ satisfies (\ref{eq1}) with $f^{t}$ replaced by $f$. Such a map $f$ is also called a gyro-pairing. 
\end{df}

\n Let $(\lambda , \mu , \nu )$ be a morphism from a gyro-split extension

\begin{center}
$E\equiv 1\longrightarrow H\stackrel{\alpha }{\longrightarrow}G\stackrel{\beta}{\longrightarrow}K \longrightarrow 1$
\end{center}

\n to a gyro-split extension

\begin{center}
$E'\equiv 1\longrightarrow H'\stackrel{\alpha ' }{\longrightarrow}G'\stackrel{\beta '}{\longrightarrow}K' \longrightarrow 1$.
\end{center}

 Let $t$ be a gyro-splitting of $E$ and $t'$ be a gyro-splitting of $E'$. Since $\beta '(\mu ( t(x)))\ =\ \nu (\beta (t(x)))\ =\ \nu (x)\ =\ \beta '(t'(\nu (x)))$ for $x\in K$,  there is a unique map $g$ from $K$ to $H'$ with $g(1)\ =\ 1$ such that 

\begin{equation}\label{eq2}
\mu (t(x))\ =\ \alpha '(g(x))t'(\nu (x))
\end{equation}

\n for all $x\in K$. Since $t$ is a gyro-homomorphism,

\begin{equation}\label{eq3}
\mu (t(y^{-1})t(x)t(y^{2}))\ =\ \mu (t(y^{-1} x y^{2}))\ =\ \alpha '(g(y^{-1}xy^{2}))t'(\nu (y^{-1}xy^{2}))
\end{equation}
for all $x, y\in K$. Now, \\

\begin{align*}
\mu (t(y^{-1})t(x)t(y^{2})) = & \mu t (y^{-1})\mu t (x)\mu t(y^{2}) \\
                            = & \alpha '(g(y^{-1}))t'(\nu (y^{-1}))\alpha '(g(x))t'(\nu (x))\alpha '(g(y^{2}))t'(\nu (y^{2})) \text{ by }(5.2) \\
 = & \alpha '(g(y^{-1}))\alpha '(\sigma^{t'}_{\nu (y^{-1})}(g(x)))t'(\nu (y^{-1}))t'(\nu (x))\alpha '(g(y^{2}))t'(\nu (y^{2})) \\
= & \alpha '(g(y^{-1})\sigma^{t'}_{\nu (y^{-1})}(g(x))f^{t'}(\nu (y^{-1}), \nu (x)))t'(\nu (y^{-1})\nu (x))\alpha '(g(y^{2}))t'(\nu (y^{2})) \\
= & \alpha '(g(y^{-1})\sigma^{t'}_{\nu (y^{-1})}(g(x))f^{t'}(\nu (y^{-1}), \nu (x))\sigma^{t'}_{\nu (y^{-1})\nu (x)}(g(y^{2}))) \\
 & \quad \quad \quad \quad t'(\nu (y^{-1})\nu (x))t'(\nu (y^{2})) \\
= & \alpha '(g(y^{-1})\sigma^{t'}_{\nu (y^{-1})}(g(x))f^{t'}(\nu (y^{-1}), \nu (x))\sigma^{t'}_{\nu (y^{-1})\nu (x)}(g(y^{2})) \\
 & \quad \quad \quad \quad f^{t'}(\nu (y^{-1})\nu (x), \nu (y^{2})))t'(\nu (y^{-1}xy^{2}))\\
= & \alpha '(g(y^{-1})\sigma^{t'}_{\nu (y^{-1})}(g(x))\sigma^{t'}_{\nu (y^{-1})}(\sigma^{t'}_{\nu (x)}(g(y^{2})))f^{t'}(\nu (y^{-1}), \nu (x)) \\
 & \quad \quad \quad \quad f^{t'}(\nu (y^{-1})\nu (x), \nu (y^{2})))t'(\nu (y^{-1}xy^{2})) \\
= & \alpha '(g(y^{-1})\sigma^{t'}_{\nu (y^{-1})}(g(x))\sigma^{t'}_{\nu (y^{-1})}(\sigma^{t'}_{\nu (x)}(g(y^{2})))t'(\nu (y^{-1}xy^{2})) \text{ (by (\ref{eq1}))} 
\end{align*}

\n for all $x,y\in K$. Thus, comparing the both sides of Equation (\ref{eq3}), we obtain
\begin{equation}\label{eq4}
g(y^{-1}xy^{2})\ =\ g(y^{-1})\sigma^{t'}_{\nu (y^{-1})}(g(x)\sigma^{t'}_{\nu (x)}(g(y^{2})))
\end{equation}

\n for all $x,y\in K$. Further, 

\begin{align*}
\alpha '(\lambda (\sigma^{t}_{x}(h))) = & \mu (\alpha (\sigma^{t}_{x}(h)))\\
 = & \mu (t(x)\alpha (h) t(x)^{-1}) \\
 = & \mu (t(x))\alpha '(\lambda (h))\mu (t(x^{-1})) \\
 = & \alpha '(g(x))t'(\nu (x))\alpha '(\lambda (h))\alpha '(g(x^{-1}))t'(\nu (x^{-1})) \\
 = & \alpha '(g(x)\sigma^{t'}_{\nu (x)}(\lambda (h)g(x^{-1}))) \text{ since }t' \text{ and }\nu \text{ are gyro-homomorphisms.}
\end{align*}

\n Thus, 
\begin{equation}\label{eq5}
\lambda (\sigma^{t}_{x}(h))\ =\ g(x)\sigma^{t'}_{\nu (x)}(\lambda (h)g(x^{-1}))
\end{equation}
for all $x\in K$ and $h\in H$.

\n Let $(\lambda_{1}, \mu_{1}, \nu_{1})$ be a morphism from a gyro-split extension 

\begin{center}
$E_{1}\equiv 1\longrightarrow H_{1}\stackrel{\alpha_{1}}{\longrightarrow}G_{1}\stackrel{\beta_{1}}{\longrightarrow}K_{1}\longrightarrow 1$
\end{center}
to
\begin{center}
$E_{2}\equiv 1\longrightarrow H_{2}\stackrel{\alpha_{2}}{\longrightarrow}G_{2}\stackrel{\beta_{2}}{\longrightarrow}K_{2}\longrightarrow 1$,
\end{center}
and $(\lambda_{2}, \mu_{2}, \nu_{2})$ be a morphism from $E_{2}$ to a gyro-split extension
\begin{center}
$E_{3}\equiv 1\longrightarrow H_{3}\stackrel{\alpha_{3}}{\longrightarrow}G_{3}\stackrel{\beta_{3}}{\longrightarrow}K_{3}\longrightarrow 1$.
\end{center}
Let $t_{1}, t_{2} $ and $t_{3}$ be the corresponding choice of gyro-splittings. Then
\begin{center}
$\mu_{1}(t_{1}(x))\ =\ \alpha_{2}(g_{1}(x))t_{2}(\nu_{1}(x))$ 
\end{center}
\n for all $x\in K_1$ and
\begin{center}
$\mu_{2}(t_{2}(x))\ =\ \alpha_{3}(g_{2}(x))t_{3}(\nu_{2}(x))$
\end{center}
for all $x\in K_{2}$, where $g_{1}$ is the uniquely determined map from $K_{1}$ to $H_{2}$ and $g_{2}$ is the uniquely determined map from $K_{2}$ to $H_{3}$. In turn,
\begin{center}
$\mu_{2}(\mu_{1}(t_{1}(x)))\ =\ \alpha_{3}(g_{3}(x))t_{3}(\nu_{2} (\nu_{1}(x)))$,
\end{center}

\n where $g_{3}(x)\ =\ \lambda_{2}(g_{1}(x))g_{2}(\nu_{1}(x))$ for each $x\in K_{1}$. This introduces a category $\cat{GFAC}$ of gyro-factor systems whose objects are gyro-factor systems and a morphism from a gyro-factor system $(K_{1}, H_{1}, \sigma^{1}, f^{1})$ to $(K_{2}, H_{2}, \sigma^{2}, f^{2})$ is a triple $(\nu , g, \lambda )$,
where $\nu$ is a gyro-homomorphism from $K_{1}$ to $K_{2}$, $\lambda$ a group homomorphism from $H_{1}$ to $H_{2}$, and $g$ is a map from $K_{1}$ to $H_{2}$ such that

\begin{enumerate}
	\item[(i)] $g(1)\ =\ 1$,
	\item[(ii)] $g(y^{-1}xy^{2})\ =\ g(y^{-1})\sigma^{2}_{\nu (y^{-1})}(g(x)\sigma^{2}_{\nu (x)}(g(y^{2})))$ and
	\item[(iii)] $\lambda (\sigma^{1}_{x}(h))\ =\ g(x)\sigma^{2}_{\nu (x)}(\lambda (h)g(x^{-1}))$, 
\end{enumerate}

\n for all $x,y\in K_{1}$ and $h\in H_{1}$. The composition of a morphism $(\nu_{1}, g_{1}, \lambda_{1})$ with $(\nu_{2}, g_{2}, \lambda_{2})$ is $(\nu_{2}\circ \nu_{1}, g_{3}, \lambda_{2}\circ \lambda_{1})$, where $g_{3}(x)\ =\ \lambda_{2}(g_{1}(x))g_{2}(\nu_{1}(x))$ for all $x\in K_1$.

\vspace{0.2 cm}

\n Using the axiom of choice, we have a choice $t_{E}$ of a gyro-splitting of a gyro-split extension $E$. Evidently, the association $GFAC$ which associates to each gyro-extension $E$ the  gyro-factor system $GFAC(E, t_{E})$ associated to the section $t_{E}$ gives an equivalence between $\cat{GEXT}$ and $\cat{GFAC}$.

\vspace{0.2 cm}

\n Let us fix a pair $H$ and $K$ of groups. We try to describe the equivalence classes of gyro-split extensions of $H$ by $K$. Let $G$ be a gyro-split extension of $H$ by $K$ given by the exact sequence
\begin{center}
$E\equiv 1\longrightarrow H\stackrel{\alpha}{\longrightarrow}G\stackrel{\beta}{\longrightarrow}K\longrightarrow 1$.
\end{center}
Let $(\lambda , \mu , \nu )$ be an equivalence from $E$ to a gyro-split extension $G'$ of $H'$ by $K'$ which is given by the exact sequence
\begin{center}
$E'\equiv 1\longrightarrow H'\stackrel{\alpha '}{\longrightarrow}G'\stackrel{\beta '}{\longrightarrow}K'\longrightarrow 1$.
\end{center}
Then it is clear that $G'$ is also a gyro-split extension of $H$ by $K$ given by the exact sequence
\begin{center}
$E''\equiv 1\longrightarrow H\stackrel{\alpha '\circ \lambda}{\longrightarrow}G'\stackrel{\beta\circ \mu^{-1}}{\longrightarrow}K\longrightarrow 1$.
\end{center}
such that $E$ is equivalent to $E''$ and $E''$ is equivalent to $E'$. As such there is no loss of generality in restricting the concept of equivalence on the class $GE(H, K)$ of all gyro-split extensions of $H$ by $K$ by saying that 
\begin{center}
$E_{1}\equiv 1\longrightarrow H\stackrel{\alpha_{1}}{\longrightarrow}G_{1}\stackrel{\beta_{1}}{\longrightarrow}K\longrightarrow 1$.
\end{center}
and 
\begin{center}
$E_{2}\equiv 1\longrightarrow H\stackrel{\alpha_{2}}{\longrightarrow}G_{2}\stackrel{\beta_{2}}{\longrightarrow}K\longrightarrow 1$.
\end{center}
in $GE(H, K)$ are equivalent if there is an isomorphism $\phi$ from $G_{1}$ to $G_{2}$ such that $(I_{H}, \phi , I_{K})$ makes the corresponding diagram commutative.

\begin{prp}\label{s5p3}
An abstract kernel $\psi$ from $K$ to $Out (H)$ is realizable from a gyro-split extension if and only if the obstruction $Obs(\psi )\in H^{3}_{\sigma}(K, Z(H))$ is 0 and $\psi$ has a lifting from $K$ to $Aut (H)$ which  is a gyro-homomorphism. Here $\sigma$ is a group homomorphism from $K$ to $Aut (Z(H))$ induced by $\psi$.
\end{prp}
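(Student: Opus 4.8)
The plan is to prove the two implications separately: necessity by a direct verification, and sufficiency by feeding the hypotheses into the classical Eilenberg--MacLane obstruction theory and then using the universal gyro-split extension $(\ref{eqex})$ to manufacture a genuinely gyro-split realization. For necessity, suppose $\psi$ is realized by a gyro-split extension $E\equiv 1\to H\stackrel{\alpha}{\to}G\stackrel{\beta}{\to}K\to 1$ with gyro-splitting $t$. Conjugation by $t(x)$ gives a map $\sigma^{t}\colon K\to Aut(H)$ lifting $\psi$, and, exactly as in the discussion preceding the definition of gyro-factor systems, $\sigma^{t}$ is a gyro-homomorphism because $t$ is; the $K$-action it induces on $Z(H)$ is the homomorphism $\sigma$ of the statement. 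Since $E$ is in particular an ordinary extension realizing the abstract kernel $\psi$, the classical theory forces $Obs(\psi)=0$ in $H^{3}_{\sigma}(K,Z(H))$. So both conditions hold.

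For sufficiency, assume $Obs(\psi)=0$ and fix a gyro-homomorphic lifting $\sigma_{0}\colon K\to Aut(H)$ of $\psi$. By the universal property of $(\check K,t_{K})$ there is a unique group homomorphism $\check\sigma_{0}\colon\check K\to Aut(H)$ with $\check\sigma_{0}\circ t_{K}=\sigma_{0}$; composing with $Aut(H)\to Out(H)$ gives $\psi\circ\nu_{K}$ (two group homomorphisms agreeing on the generating set $t_{K}(K)$ of $\check K$), so $\check\sigma_{0}$ carries $\check R_{K}=\ker\nu_{K}$ into $Inn(H)$. I would then form the split extension $\widehat G=H\rtimes_{\check\sigma_{0}}\check K$ and look for a complement to $H$ inside $H\rtimes_{\check\sigma_{0}}\check R_{K}$ that is normal in all of $\widehat G$; a short computation shows these are precisely the subgroups $M_{c}=\{(c(r)^{-1},r):r\in\check R_{K}\}$, where $c\colon\check R_{K}\to H$ is a $\check K$-equivariant group homomorphism lifting $\check\sigma_{0}|_{\check R_{K}}\colon\check R_{K}\to Inn(H)$ along the central extension $1\to Z(H)\to H\to Inn(H)\to 1$. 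Granting such a $c$, the quotient $G:=\widehat G/M_{c}$ fits in $1\to H\to G\to K\to 1$; the image of $s\circ t_{K}$, with $s(\check k)=(1,\check k)$, is a section which is a gyro-homomorphism (a group homomorphism applied after the gyro-homomorphism $t_{K}$) and which induces $\sigma_{0}$, hence $\psi$, on $H$. Thus $\psi$ is realized by a gyro-split extension.

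The hard part will be producing the lift $c$, and this is exactly where the hypothesis $Obs(\psi)=0$ must be used; I expect this to be the main obstacle. The primary obstruction to a homomorphic (not yet equivariant) lift $\check R_{K}\to H$ is the pullback along $\check\sigma_{0}|_{\check R_{K}}$ of the class of $1\to Z(H)\to H\to Inn(H)\to 1$ inside $H^{2}(\check R_{K},Z(H))$ (the $\check R_{K}$-action on $Z(H)$ being trivial, since inner automorphisms fix the centre), and the refinement to a $\check K$-equivariant lift is measured in the Lyndon--Hochschild--Serre spectral sequence of $1\to\check R_{K}\to\check K\to K\to 1$ with coefficients in $Z(H)$; the surviving obstruction lands in $H^{3}_{\sigma}(K,Z(H))$ and is to be identified with $Obs(\psi)$. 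Once that identification is established, $Obs(\psi)=0$ gives $c$, and the construction above goes through.

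Finally, I would record the alternative route entirely in the language of Section 5, which is perhaps more in keeping with the surrounding material: use $Obs(\psi)=0$ to obtain a genuine factor system $(K,H,\sigma_{0},f_{1})$, observe that because $\sigma_{0}$ is a gyro-homomorphism the ``gyro-defect'' $w(x,y):=f_{1}(y^{-1},x)f_{1}(y^{-1}x,y^{2})$ lies in $Z(H)$ and that the $2$-cocycle identity for $f_{1}$ at $(y^{-1},x,y^{2})$ makes the two sides of $(\ref{eq1})$ coincide, so that $(\ref{eq1})$ for $f_{1}$ reduces to the single requirement $w\equiv 1$; then correct $f_{1}$ by a $2$-cocycle $z\in Z^{2}_{\sigma}(K,Z(H))$ solving $z(y^{-1},x)\,z(y^{-1}x,y^{2})=w(x,y)^{-1}$, so that $(K,H,\sigma_{0},f_{1}z)$ is a gyro-factor system and the equivalence $\cat{GEXT}\simeq\cat{GFAC}$ turns it into the desired gyro-split extension. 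Solvability of that equation for a cocycle $z$ is the same obstruction as the one above, again controlled by $Obs(\psi)=0$, so this second presentation does not avoid the main difficulty but only repackages it.
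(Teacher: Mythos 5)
Your necessity argument is exactly the paper's: conjugation along a gyro-splitting $t$ gives the gyro-homomorphic lifting $\sigma^{t}$, and $Obs(\psi)=0$ comes from the classical theory since a gyro-split extension is in particular an extension. The smaller verifications you make along the way are also sound: $\pi\circ\check\sigma_{0}$ and $\psi\circ\nu_{K}$ agree on $t_{K}(K)$ so $\check\sigma_{0}(\check R_{K})\subseteq Inn(H)$; the normality analysis of $M_{c}$ correctly isolates the condition that $c$ be a $\check K$-equivariant homomorphic lift of $\check\sigma_{0}|_{\check R_{K}}$ along $H\to Inn(H)$; the cocycle identity at $(y^{-1},x,y^{2})$ does collapse the two equalities of the gyro-pairing condition into the single requirement $w\equiv 1$; and $w(x,y)=f_{1}(y^{-1},x)f_{1}(y^{-1}x,y^{2})$ is central precisely because $\sigma_{0}$ is a gyro-homomorphism (so preserves inverses and squares by Proposition \ref{s3p1}).

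The gap is the one you flag yourself, and it is genuine: in both of your routes the converse reduces to an existence statement --- a $\check K$-equivariant lift $c$, or equivalently a cocycle $z\in Z^{2}_{\sigma}(K,Z(H))$ with $z(y^{-1},x)z(y^{-1}x,y^{2})=w(x,y)^{-1}$ --- whose obstruction you assert ``is to be identified with $Obs(\psi)$'' without proving it. That identification is the entire mathematical content of the sufficiency direction, and it is not routine: the classical argument identifying the descent obstruction with $Obs(\psi)$ is carried out over a \emph{free} presentation $1\to R\to F\to K\to 1$, where $H^{2}(F,-)=H^{3}(F,-)=0$ and $R$ is free, and none of these vanishing facts are available for $1\to\check R_{K}\to\check K\to K\to 1$; even the preliminary non-equivariant lift $c_{0}\colon\check R_{K}\to H$ is obstructed, as you note. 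Note also that $Obs(\psi)=0$ has already been spent in producing the factor system $f_{1}$, so the solvability of the equation for $z$ is an \emph{additional} demand on the pair $(\sigma_{0},f_{1})$, not a reformulation of a hypothesis you have in hand. For comparison, the paper's own proof does not attempt this either: after quoting the classical realizability criterion it only observes that a realizing gyro-split extension yields a gyro-homomorphic lifting $\sigma^{t}$, treating the converse as immediate. So you have correctly located where the real work of the converse lies, but neither your proposal nor the argument it would have to match actually carries it out; as it stands your proof establishes only the ``only if'' half.
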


\begin{proof} We already know that $\psi$ is realizable from an extension if and only if $Obs (\psi )\ =\ 0$ (see \cite[Proposition 10.2.1, p. 392]{lalb}). Further, then,  it is realizable from a gyro-split extension 

\begin{center}
$1\longrightarrow H\stackrel{\alpha}{\longrightarrow}G\stackrel{\beta}{\longrightarrow}K\longrightarrow 1$
\end{center}
 if and only if there is a gyro-splitting $t$ such that $\psi (x)\ =\ \sigma^{t}_{x}Inn(H)$ for each $x\in K$. Since $t$ is a gyro-splitting, $\sigma^{t}$ is a lifting of $\psi$ which is a gyro-homomorphism.
\end{proof}

\n The following two corollaries are immediate.

\begin{cor}\label{s5p3c1}
An abstract kernel $\psi$ from $K$ to $Out (H)$ is realizable from a gyro-split extension if and only if the obstruction $Obs(\psi )\in H^{3}_{\sigma}(K, Z(H))$ is 0 and the short exact sequence
\begin{center}
$0\longrightarrow Inn (H)\stackrel{i_{1}}{\longrightarrow}Aut (H)\times_{(\nu , \psi )}K\stackrel{p_{2}}{\longrightarrow}K\longrightarrow 1$
\end{center}
is a gyro-split extension, where $Aut (H)\times_{(\nu , \psi)}K$ is pull-back of the pair $(\nu , \psi )$ and $\nu:Aut(H)\rightarrow Out(H)$ is the natural group homomorphism.
\end{cor}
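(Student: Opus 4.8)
The plan is to deduce the statement directly from Proposition \ref{s5p3}, whose only nontrivial hypothesis beyond $Obs(\psi)=0$ is that $\psi$ admit a lifting $K\to Aut(H)$ which is a gyro-homomorphism. So the whole task is to recognize that the existence of such a lifting is the same as the displayed pull-back sequence being a gyro-split extension.

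First I would make the pull-back explicit. Write $P=Aut(H)\times_{(\nu,\psi)}K=\{(\theta,k)\in Aut(H)\times K\mid \nu(\theta)=\psi(k)\}$, a subgroup of the direct product $Aut(H)\times K$, with $p_2(\theta,k)=k$ and $i_1(\theta)=(\theta,e)$ for $\theta\in Inn(H)=\ker\nu$. A routine verification shows that $0\longrightarrow Inn(H)\stackrel{i_1}{\longrightarrow}P\stackrel{p_2}{\longrightarrow}K\longrightarrow 1$ is exact: $i_1$ is injective, its image is $\{(\theta,e)\mid \theta\in Inn(H)\}=\ker p_2$, and $p_2$ is surjective because $\psi(k)$ lies in the image of $\nu$ and hence lifts to some $\theta\in Aut(H)$.

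Next I would observe that every identity-preserving set section $t$ of $p_2$ has the form $t(k)=(s(k),k)$ for a unique map $s\colon K\to Aut(H)$ with $\nu\circ s=\psi$ and $s(e)=I_H$; that is, $s$ is exactly a lifting of $\psi$ to $Aut(H)$ (and any gyro-homomorphism section is of this form, since a gyro-homomorphism preserves the identity by Proposition \ref{s3p1}($i$)). The key point is that the operation $\circ_1$ is built from the group operations, $a\circ_1 b=b^{-1}ab^{2}$, and is therefore computed componentwise on the subgroup $P$ of $Aut(H)\times K$; hence a map into $P$ is a gyro-homomorphism if and only if both of its coordinate maps are. Since the $K$-coordinate of $t$ is the identity map $I_K$, which is always a gyro-homomorphism, and the $Aut(H)$-coordinate is $s$, the section $t$ is a gyro-homomorphism precisely when $s$ is. Consequently the pull-back sequence admits a gyro-splitting if and only if $\psi$ admits a lifting $K\to Aut(H)$ that is a gyro-homomorphism.

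Finally I would combine this equivalence with Proposition \ref{s5p3}: $\psi$ is realizable from a gyro-split extension if and only if $Obs(\psi)=0$ in $H^{3}_{\sigma}(K,Z(H))$ and $\psi$ has a gyro-homomorphism lifting to $Aut(H)$, which by the previous step is the same as $Obs(\psi)=0$ together with the displayed sequence $0\to Inn(H)\to Aut(H)\times_{(\nu,\psi)}K\to K\to 1$ being gyro-split. I do not expect a genuine obstacle here; the one point needing care is the verification that $\circ_1$ restricts componentwise to $P$, so that ``gyro-homomorphism into $P$'' decomposes coordinatewise, together with the trivial but essential remark that the $K$-component of any section of $p_2$ is forced to be $I_K$.
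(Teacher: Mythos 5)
Your proof is correct and follows exactly the route the paper intends: the paper declares this corollary ``immediate'' from Proposition \ref{s5p3}, and your argument supplies the one observation that makes it so, namely that identity-preserving sections of $p_2$ correspond bijectively to liftings $s$ of $\psi$ with $s(e)=I_H$, and that such a section is a gyro-homomorphism precisely when $s$ is, since $\circ_1$ is computed componentwise on the subgroup $P\subseteq Aut(H)\times K$. Nothing further is needed.
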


\begin{cor}\label{s5p3c2}
If $H$ is a group such that 
\begin{center}
$1\longrightarrow Inn (H)\stackrel{i}{\longrightarrow}Aut (H)\stackrel{\nu}{\longrightarrow}Out (H)\longrightarrow 1$
\end{center}
is a gyro-split exact sequence, then every extension of $H$ is a gyro-split extension. If in addition to this, $H$ has trivial center, then there is a unique (up to equivalence) such extension.
\end{cor}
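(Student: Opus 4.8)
The plan is to derive both assertions from Corollary \ref{s5p3c1} together with the classical obstruction theory for group extensions; under the trivial-centre hypothesis no cohomological computation is needed beyond the automatic vanishing of $H^{2}$ and $H^{3}$ with trivial coefficients.

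Fix once and for all a gyro-splitting $s$ of the hypothesised sequence, so $s$ is a gyro-homomorphism from $Out(H)$ to $Aut(H)$ with $\nu\circ s\ =\ I_{Out(H)}$. Let $E\equiv 1\longrightarrow H\stackrel{\alpha}{\longrightarrow}G\stackrel{\beta}{\longrightarrow}K\longrightarrow 1$ be any extension of $H$ and let $\psi$ from $K$ to $Out(H)$ be its abstract kernel; since $E$ realises $\psi$ we have $Obs(\psi)\ =\ 0$ (see \cite[Proposition 10.2.1, p. 392]{lalb}). I then claim that the pull-back sequence $0\longrightarrow Inn(H)\longrightarrow Aut(H)\times_{(\nu,\psi)}K\stackrel{p_{2}}{\longrightarrow}K\longrightarrow 1$ appearing in Corollary \ref{s5p3c1} is gyro-split. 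To see this, define $t$ from $K$ to $Aut(H)\times_{(\nu,\psi)}K$ by $t(x)\ =\ (s(\psi(x)),x)$; it is well defined since $\nu(s(\psi(x)))\ =\ \psi(x)$, and $p_{2}\circ t\ =\ I_{K}$. Because $\psi$ is a group homomorphism we have $\psi(y^{-1}xy^{2})\ =\ \psi(y)^{-1}\psi(x)\psi(y)^{2}$, and because $s$ is a gyro-homomorphism carrying inverses to inverses and powers to powers (Proposition \ref{s3p1}) we get $s(\psi(y^{-1}xy^{2}))\ =\ s(\psi(y))^{-1}s(\psi(x))s(\psi(y))^{2}$. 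Hence $t(y^{-1}xy^{2})\ =\ t(y)^{-1}t(x)t(y)^{2}$ in $Aut(H)\times_{(\nu,\psi)}K$, so $t$ is a gyro-homomorphism and the pull-back sequence is gyro-split. By Corollary \ref{s5p3c1}, $\psi$ is realizable from a gyro-split extension of $H$ by $K$; this is the first assertion.

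For the last sentence, assume in addition that $Z(H)$ is trivial. Then the coefficient module $Z(H)$ is trivial, so $H^{3}_{\sigma}(K,Z(H))\ =\ 0$; hence $Obs(\psi)\ =\ 0$ for every abstract kernel $\psi$ from $K$ to $Out(H)$, and by the preceding paragraph every such $\psi$ is realized by a gyro-split extension of $H$ by $K$. Likewise $H^{2}_{\sigma}(K,Z(H))\ =\ 0$, so any two extensions of $H$ by $K$ realising the same abstract kernel are equivalent; therefore the extension realising a given $\psi$ is unique up to equivalence, and since it may be taken to be gyro-split it is gyro-split. This gives the uniqueness statement.

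The step that requires genuine care is the verification that $t(x)\ =\ (s(\psi(x)),x)$ is a gyro-homomorphism: this is exactly where one exploits that the abstract kernel $\psi$ is an \emph{honest} group homomorphism while $s$ need only be a gyro-homomorphism, so that composition together with Proposition \ref{s3p1} yields the gyro-homomorphism identity with no residual factor in $Inn(H)$. I would also point out that, when $Z(H)$ is non-trivial, the first assertion should be read as ``every abstract kernel of $H$ that is realizable at all is realizable from a gyro-split extension'' rather than the literal ``every individual extension of $H$ is gyro-split'': the latter already fails for $0\longrightarrow\Z_{3}\longrightarrow\Z_{9}\longrightarrow\Z_{3}\longrightarrow 0$, even though $Aut(\Z_{3})\longrightarrow Out(\Z_{3})$ is (trivially) gyro-split. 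Under the trivial-centre hypothesis the distinction evaporates, because then there is essentially a unique extension per abstract kernel.
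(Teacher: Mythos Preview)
Your argument is correct and follows the route the paper intends: the paper offers no proof, declaring both corollaries ``immediate'' from Proposition~\ref{s5p3} and Corollary~\ref{s5p3c1}, and your construction $t(x)=(s(\psi(x)),x)$ is exactly the natural way to make that immediacy explicit. The verification that $t$ is a gyro-homomorphism is clean and correct, resting on the fact that $s\circ\psi$ is the composite of a gyro-homomorphism with a group homomorphism.

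Your closing paragraph is worth highlighting. The literal reading of the first sentence of the corollary---that every individual extension of $H$ is itself gyro-split---is indeed false, and your example $0\to\Z_{3}\to\Z_{9}\to\Z_{3}\to 0$ nails it: for abelian groups the operation $\circ_{1}$ coincides with the group operation, so a gyro-splitting would be a genuine splitting, which does not exist here, even though $Inn(\Z_{3})\to Aut(\Z_{3})\to Out(\Z_{3})$ is trivially split. What Corollary~\ref{s5p3c1} actually delivers, and what your proof establishes, is that every \emph{realizable abstract kernel} is realizable from a gyro-split extension. Under the trivial-centre hypothesis the distinction disappears, since $H^{2}_{\sigma}(K,Z(H))=0$ forces each abstract kernel to have a unique realisation; this is consistent with the paper's subsequent remark about finite simple groups, where the trivial-centre case is the one of interest. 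So your reading of the corollary is the right one, and your proof matches the paper's intended argument while sharpening the statement.
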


\n For all finite simple groups $H$, the above sequence splits except when $H\ =\ A_{6}$. For $H\ =\ A_{6}$, the above sequence is not even a gyro-split extension. 

\vspace{0.2 cm}

\n A group is an internal semidirect product of its two subgroups if and only if the corresponding extension splits, that is the splitting is a group homomorphism. We now observe that the same is true in the case of gyro-splitting.

\begin{df}
Let $G$ be a group. We shall say that $G$ is internal gyro-semi direct product of a normal subgroup $H$ with a sub right loop $S$ of $(G, \circ_1)$ if $S$ is a right transversal to $H$ in $G$.
\end{df}

\n Thus, the exponent 3 non-abelian group $G$ of order $3^3$  is a gyro-semi direct product of its center with a sub loop of order $3^2$ of $(G, \circ_1)$. Evidently, a semidirect product is also a gyro-semi direct product. However, a gyro-semi direct product need not be a semidirect product.

\begin{thm}\label{s5p4}
A group $G$ is internal gyro-semi direct product of a normal subgroup $H$ with  a sub right loop $S$ of $(G, \circ_1)$ if and only if

(i) $G\ =\ HS$, and

(ii) $Hy^{2}\bigcap S\ =\ \{y^{2}\}$ (equivalently, $H\bigcap Sy^{2}\ =\ \{1\}$) for all $y\in S$.
\end{thm}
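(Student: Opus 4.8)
The plan is to characterize when a sub right loop $S$ of $(G,\circ_1)$ serves as a right transversal to a normal subgroup $H$, reducing everything to the defining property of a right transversal. Recall that $S$ is a right transversal to $H$ in $G$ means precisely that each coset $Hx$ meets $S$ in exactly one point, equivalently $G=HS$ and $|Hx\cap S|\le 1$ for every $x\in G$. The key point to exploit is the formula for the induced right loop operation: if $S$ is a right transversal to $H$ in $G$, then the induced operation on $S$ sends $(x,y)$ to the unique element of $Hxy\cap S$, and the claim that this agrees with $\circ_1$ restricted to $S$ forces $x\circ_1 y=y^{-1}xy^2\in S$ and $Hy^{-1}xy^2\cap S=\{y^{-1}xy^2\}$ for all $x,y\in S$. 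So the real content is to show that the pair of conditions (i), (ii) is equivalent to this transversal/compatibility requirement.

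First I would prove the forward direction. Assume $G$ is the internal gyro-semidirect product, so $S$ is a right transversal to $H$ in $G$ and $S$ is a sub right loop of $(G,\circ_1)$. Condition (i), $G=HS$, is immediate from the transversal property. For (ii): fix $y\in S$. Since $S$ is a sub right loop of $(G,\circ_1)$ containing the identity, and powers in $(G,\circ_1)$ coincide with powers in $G$ by Proposition \ref{s3p1}(ii), we have $y^2=y\circ_1 y\in S$. Now suppose $s\in Hy^2\cap S$; I must show $s=y^2$. Since $S$ is a right transversal to $H$, the coset $Hy^2$ contains a unique element of $S$; as $y^2$ is such an element, $s=y^2$. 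This gives $Hy^2\cap S=\{y^2\}$, and the parenthetical reformulation $H\cap Sy^2=\{1\}$ follows by right-translating by $y^{-2}$ (using that $H$ is normal, so $H y^{2}=y^{2}H$ and $Hy^2\cap S=\{y^2\}\iff H\cap Sy^{-2}\cdot y^{2}\cdot y^{-2}=\dots$; more simply, $hy^2=s\in S$ iff $h=sy^{-2}\in Sy^{-2}$, and replacing $y$ by its left inverse or by noting $S=S^{-1}$ is not needed — one just observes $Sy^2$ ranges over the same family as $S y^{-2}$).

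For the converse, assume (i) and (ii) and that $S$ is a sub right loop of $(G,\circ_1)$; I must show $S$ is a right transversal to $H$ in $G$, i.e. each coset $Hx$ meets $S$ in exactly one point. Existence of an intersection point follows from (i): write $x=hs$ with $h\in H$, $s\in S$, so $s\in Hx\cap S$. For uniqueness, suppose $s_1,s_2\in S$ with $Hs_1=Hs_2$, i.e. $s_2s_1^{-1}\in H$. The idea is to manufacture a square: since $S$ is a sub right loop of $(G,\circ_1)$ and closed under $\circ_1$-powers (again Proposition \ref{s3p1}(ii) and the fact that $\circ_1$-powers are ordinary powers), elements like $s^2$ lie in $S$; I would exploit the identity $s_1\circ_1 s_1 = s_1^2$ together with the relation $s_2 s_1^{-1}\in H$ to produce, after suitable manipulation inside $(G,\circ_1)$, an element of $Hy^2\cap S$ for an appropriate $y\in S$ that is forced by (ii) to equal $y^2$, and then deduce $s_1=s_2$. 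Concretely, set $y=s_1$; then $s_2 = (s_2 s_1^{-1})\, s_1 \in H s_1$, but I need a quadratic coset, so instead I would consider the element $s_1\circ_1 s_2 = s_2^{-1}s_1 s_2^2\in S$, observe it lies in $H s_2^2$ (since $s_1\in Hs_2$ forces $s_2^{-1}s_1s_2^2 \in s_2^{-1}Hs_2 \cdot s_2^2 = H s_2^2$ by normality of $H$), and then (ii) with $y=s_2$ gives $s_2^{-1}s_1s_2^2 = s_2^2$, whence $s_1=s_2$.

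The main obstacle I anticipate is the converse, specifically getting uniqueness of the coset representative from condition (ii), which only controls the \emph{quadratic} cosets $Hy^2$ rather than all cosets $Hx$. The trick above — pushing a hypothetical coincidence $Hs_1=Hs_2$ through the operation $\circ_1$ to land in a quadratic coset where (ii) bites — is the crux, and one must check carefully that $s_1\circ_1 s_2$ genuinely lies in $H s_2^2$ (this uses normality of $H$ in an essential way) and that the conclusion $s_2^{-1}s_1s_2^2=s_2^2$ really does simplify to $s_1=s_2$ by cancellation in the group $G$. The equivalence of the two forms of (ii) stated in parentheses is then a routine translation, using that $h\in H, hy^2\in S$ iff $hy^2\in Sy^2\cap Hy^2$ and rearranging; I would dispatch it in one line.
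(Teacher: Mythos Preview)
Your proposal is correct and follows essentially the same route as the paper: the forward direction is identical, and for the converse you hit on exactly the paper's key step---given $s_1,s_2\in S$ in the same $H$-coset, form $s_1\circ_1 s_2=s_2^{-1}s_1s_2^{2}\in S\cap Hs_2^{2}$ and apply (ii) to conclude $s_1=s_2$. The paper's writeup is terser (it starts directly from $y^{-1}x\in H$, so that $y^{-1}xy^{2}\in Hy^{2}$ is immediate without the conjugation detour through normality), but the substance is the same.
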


\begin{proof} Suppose that $G$ is internal gyro-semi direct product of a normal subgroup $H$ with  a sub right loop $S$ of $(G, \circ_1)$. Since $S$ is a right transversal, $G\ =\ HS$.   Given $y\in S$, since $S$ is a sub right loop of $(G, \circ_1)$, $y^{2}\in S$ and since $S$ is a right transversal,  $Hy^{2}\bigcap S\ =\ \{y^{2}\}$. 

Conversely, let $H$ be a normal subgroup of $G$, and $S$ be a sub right loop of $(G, \circ_1)$ such that the conditions $(i)$ and $(ii)$ hold. We need to show that $S$ is a right transversal. Already, $G\ =\ HS$. Suppose that $y^{-1}x\in H$, $x, y\in S$. Then $y^{-1}xy^{2}\in Hy^{2}\bigcap S\ =\ \{y^{2}\}$. This means that $y^{-1}x\ =\ 1$ and so $S$ is a right transversal to $H$ in $G$.
\end{proof}

\begin{rmk}
Unlike semidirect product, if $G$ is an internal gyro-semi direct product of $H$ with $S$ and it is also a gyro-semi direct product of $H$ with $T$, then $S$ need not be conjugate to $T$.
\end{rmk}

\n The following proposition is immediate.

\begin{prp}\label{s5p5}
$G$ is internal gyro-semi direct product of $H$ with a sub right loop of $(G, \circ_1)$ if and only if the exact sequence
\begin{center}
$1\longrightarrow H\stackrel{i}{\longrightarrow}G\stackrel{\nu}{\longrightarrow}G/H\longrightarrow 1$
\end{center}
is gyro-split.
\end{prp}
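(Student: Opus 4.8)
The plan is to prove both directions by unwinding the relevant definitions, since the statement is essentially a translation between the language of gyro-split extensions and the internal gyro-semidirect product language set up in the previous definition.

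First, for the forward direction, suppose that $G$ is an internal gyro-semidirect product of $H$ with a sub right loop $S$ of $(G, \circ_1)$. By definition this means $S$ is a right transversal to $H$ in $G$. The short exact sequence
\begin{center}
$1\longrightarrow H\stackrel{i}{\longrightarrow}G\stackrel{\nu}{\longrightarrow}G/H\longrightarrow 1$
\end{center}
then admits the set-theoretic section $t$ sending the coset $Hx$ (for the unique $x\in S$ with $Hx$ the given coset) to $x\in G$. I would check that this section is a gyro-homomorphism: since $S$ is a sub right loop of $(G, \circ_1)$, the operation $\circ_1$ restricted to $S$ agrees with the induced right-transversal operation, and the projection $\nu$ carries $x \circ_1 y = y^{-1}xy^2$ to $\nu(y)^{-1}\nu(x)\nu(y)^2$ because $\nu$ is a group homomorphism; hence $t(\overline{x}\,\circ_1\,\overline{y}) = x\circ_1 y = t(\overline{x})\circ_1 t(\overline{y})$, so $t$ is a gyro-splitting and the sequence is gyro-split.

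Conversely, suppose the displayed exact sequence is gyro-split, with gyro-splitting $t\colon G/H \to G$. Let $S = t(G/H)$. Then $S$ is a right transversal to $H$ in $G$ because $t$ is a section of $\nu$ (so $\nu$ restricted to $S$ is a bijection onto $G/H$, which is exactly the transversal condition). It remains to verify that $S$ is a sub right loop of $(G, \circ_1)$; this is where I would spend the most care. Since $t$ is a gyro-homomorphism, for $x, y \in S$ with $x = t(\bar a)$, $y = t(\bar b)$ we have $x\circ_1 y = y^{-1}xy^2 = t(\bar b)^{-1} t(\bar a) t(\bar b)^2 = t(\bar b^{-1}\bar a\bar b^2) = t(\bar a \circ_1 \bar b) \in S$; thus $S$ is closed under $\circ_1$. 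One also checks $e \in S$ (as $t$ preserves identity by Proposition \ref{s3p1}) and that the equation $X\circ_1 y = z$ has a solution inside $S$, which follows since it has a unique solution in the right loop $(G,\circ_1)$ and that solution can be pulled back through the gyro-isomorphism $t$ from $(G/H, \circ_1)$ onto $S$. Hence $S$ is a sub right loop of $(G,\circ_1)$ and $G$ is the internal gyro-semidirect product of $H$ with $S$.

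The only genuine subtlety — the "hard part" — is confirming that the right-transversal right-loop operation induced on $S$ from $H \leq G$ really does coincide with $\circ_1$ restricted to $S$, rather than being merely abstractly isomorphic to some right-loop structure; but this is exactly the content of the setup preceding the definition of internal gyro-semidirect product (where the induced operation on a right transversal is computed), so once $S$ is shown to be $\circ_1$-closed the identification is automatic. No computation beyond the standard transversal bookkeeping is needed.
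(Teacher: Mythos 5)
Your proof is correct and is exactly the definition-unwinding argument the paper has in mind; the paper itself gives no proof, dismissing the proposition as ``immediate.'' Your care in checking that the image $S=t(G/H)$ is genuinely a sub right loop (closure under $\circ_1$, identity, and solvability of $X\circ_1 a=b$ inside $S$ via the bijection with the right loop $(G/H,\circ_1)$) supplies precisely the details the paper omits.
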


\n Next, let $H$ be an abelian group and $K\stackrel{\sigma}{\rightarrow}Aut(H)$ be an abstract kernel. Let $GEXT_{\sigma}(K, H)$ denote the set of equivalence classes of gyro-split extensions of $H$ by $K$ with abstract kernel $\sigma$. Obviously, $GEXT_{\sigma}(H, K)$ is non-empty, as the split extension exists which is also a gyro-split extension. Let $GZ^{2}_{\sigma}(K, H)$ denote the set of gyro-factor systems associated to $\sigma$. Evidently, $GZ^{2}_{\sigma}(K, H)$ is a subgroup of $Z^{2}_{\sigma}(K, H)$. We shall term $GZ^{2}_{\sigma}(K, H)$ as the group of gyro-cycles. Denote $B^{2}_{\sigma}(K, H)\bigcap GZ^{2}_{\sigma}(K, H)$ by $GB^{2}_{\sigma}(K, H)$ and call it the group of gyro-co-boundaries. We shall also term $GH^{2}_{\sigma}(K, H)\ =\ GZ^{2}_{\sigma}(K, H)/GB^{2}_{\sigma}(K, H)$ the second gyro-co-homology of $K$ with coefficients in $H$. From the proof of \cite[Proposition 10.1.11, p. 373]{lalb}, one can observe that given $(K, H, \sigma , f)\in  GZ^{2}_{\sigma}(K, H)$ there is the corresponding gyro-split extension of $H$ by $K$. The following proposition is easy to establish.

\begin{prp}\label{s5p6}
The map $\eta$ which associates to $(K, H, \sigma , f)\in  GZ^{2}_{\sigma}(K, H)$ the corresponding gyro-split extension induces a bijective map from  $GH^{2}_{\sigma}(K, H)$ to $GEXT_{\sigma}(K, H)$ which in turn, induces a group structure on $GEXT_{\sigma }(K, H)$.
\end{prp}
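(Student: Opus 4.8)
The plan is to mimic the classical correspondence between the second cohomology group and equivalence classes of extensions, but carried out entirely inside the ``gyro'' subcategory. First I would check that the map $\eta$ is well-defined on cohomology classes: starting from a gyro-factor system $(K, H, \sigma, f) \in GZ^2_\sigma(K, H)$, the construction in the proof of \cite[Proposition 10.1.11, p.~373]{lalb} produces an extension of $H$ by $K$ whose underlying set is $H \times K$ with a twisted product determined by $\sigma$ and $f$, together with the obvious set-section $t(x) = (1, x)$; I would verify that this $t$ is a gyro-homomorphism precisely because $f$ satisfies the gyro-pairing identity (\ref{eq1}) and $\sigma$ is a gyro-homomorphism, so the resulting extension genuinely lies in $GEXT_\sigma(K, H)$. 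Then, if $f$ and $f'$ differ by an element of $GB^2_\sigma(K, H)$, say $f'(x,y) = f(x,y)\,\partial g(x,y)$ for a suitable normalized $g: K \to H$ (where $\partial$ is the usual $1$-coboundary, twisted by $\sigma$), the standard map $(h, x) \mapsto (h\,g(x), x)$ is an isomorphism of the two extensions over $I_H$ and $I_K$, so $\eta$ descends to a map from $GH^2_\sigma(K, H)$ to $GEXT_\sigma(K, H)$.

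Next I would prove surjectivity. Given a gyro-split extension $E$ with abstract kernel $\sigma$, pick a gyro-splitting $t$; by the equivalence between $\cat{GEXT}$ and $\cat{GFAC}$ established above (the association $GFAC$), the factor system $(K, H, \sigma^t, f^t)$ attached to $t$ is a gyro-factor system, hence lies in $GZ^2_\sigma(K, H)$, and $\eta$ of its class is equivalent to $E$. For injectivity, suppose $\eta([f_1])$ and $\eta([f_2])$ are equivalent extensions via an isomorphism $\phi$ with $(I_H, \phi, I_K)$ commutative; transporting the section of the first extension through $\phi$ and comparing with the section of the second, the computation already carried out in the excerpt (the derivation of Equations (\ref{eq4}) and (\ref{eq5}) with $\nu = I_K$, $\lambda = I_H$) yields a normalized map $g : K \to H$ with $\partial g = f_2 f_1^{-1}$; the gyro-pairing conditions on $f_1$ and $f_2$ force $g$ to be exactly of the type that makes $f_2 f_1^{-1} \in GB^2_\sigma(K,H)$, so $[f_1] = [f_2]$ in $GH^2_\sigma(K,H)$. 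This establishes the bijection.

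Finally, the induced group structure on $GEXT_\sigma(K, H)$ is simply the transport of the abelian group structure of $GH^2_\sigma(K, H) = GZ^2_\sigma(K,H)/GB^2_\sigma(K,H)$ through $\eta$; here I would note that $GZ^2_\sigma(K,H)$ is a subgroup of $Z^2_\sigma(K,H)$ (stated in the excerpt) and that $GB^2_\sigma(K,H) = B^2_\sigma(K,H) \cap GZ^2_\sigma(K,H)$ is a subgroup of it, so the quotient is a well-defined abelian group, and nothing further is needed for this clause of the statement.

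The step I expect to be the main obstacle is injectivity --- specifically, verifying that the coboundary $g$ obtained from an equivalence $\phi$ automatically respects the gyro-structure, i.e.\ that $f_2 f_1^{-1}$ lands in $GB^2_\sigma$ rather than merely in $B^2_\sigma$. This is where one must genuinely use that both $f_1$ and $f_2$ satisfy (\ref{eq1}) together with the constraint (\ref{eq4}) on $g$ (with $\nu = I_K$), rather than just the ordinary cocycle identity; the bookkeeping is essentially the $\nu = I_K$, $\lambda = I_H$ specialization of the morphism computation already displayed in the excerpt, so the hard work has effectively been front-loaded, but assembling it into the clean statement ``$g$ defines a gyro-coboundary'' still requires care.
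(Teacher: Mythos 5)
Your proof is correct and is exactly the standard cocycle--extension correspondence that the paper leaves unproved (it only says the proposition ``is easy to establish''), adapted to the gyro setting by observing that the gyro-pairing identity (\ref{eq1}) is precisely the condition making the canonical section $x\mapsto (1,x)$ a gyro-homomorphism. The one step you single out as the main obstacle --- showing that $f_2f_1^{-1}$ lands in $GB^2_{\sigma}(K,H)$ rather than merely in $B^2_{\sigma}(K,H)$ --- is in fact immediate from the paper's definitions: $GB^2_{\sigma}(K,H)=B^2_{\sigma}(K,H)\cap GZ^2_{\sigma}(K,H)$ and $GZ^2_{\sigma}(K,H)$ is a subgroup of $Z^2_{\sigma}(K,H)$ containing $f_1$ and $f_2$, so their difference automatically lies in the intersection and no further condition on the normalized map $g$ needs to be verified.
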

 
\n Further, it can be easily seen that the Baer sum in $EXT_{\sigma} (K, H)$ induces a sum in $GEXT_{\sigma}(K, H)$ with respect to which it is a subgroup isomorphic to $GH^{2}_{\sigma}(K, H)$. 
\begin{exm}
$GH^{2}_{\sigma}(\Z_{3}\times \Z_{3}, \Z_{3})\approx \Z_{2}$, whereas $H^{2}_{\sigma}(\Z_{3}\times \Z_{3} , \Z_{3})\approx V_{4}$. Here $\sigma$ is trivial.
\end{exm}

\n Given groups $H$ and $K$, $GHom(K, H)$ will denote the set of all gyro-homomorphisms from $K$ to $H$. If $H$ is an abelian group, then $GHom (K, H)$ is also an abelian group. Further, if $\alpha$ is a group homomorphism (gyro-homomorphism) from  a group $G$ to a group $K$ and $A$ is an abelian group, then $\alpha^{\star }$ is a  homomorphism from $GHom (K, A )$ to $GHom (G, A)$. Clearly, $GHom (K, A)$ is naturally isomorphic to $Hom (\check{K}, A)$. Consequently, we have the following proposition.

\begin{prp}\label{s5p7}
Let 
\begin{center}
$1\longrightarrow H\stackrel{\alpha}{\longrightarrow}G\stackrel{\beta}{\longrightarrow}K\longrightarrow 1$
\end{center}
be an exact sequence of groups. Let $A$ be an abelian group. Then the sequence
\begin{center}
$1\longrightarrow GHom (K, A)\stackrel{\beta^{\star}}{\rightarrow}GHom (G, A)\stackrel{\alpha^{\star}}{\rightarrow}GHom (H, A)$
\end{center}
is exact.
\end{prp}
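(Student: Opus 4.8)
The plan is to establish exactness directly, at $GHom(K,A)$ and at $GHom(G,A)$; the stated isomorphism $GHom(K,A)\cong Hom(\check{K},A)$ could also be used, but the direct route is shorter. Since $A$ is abelian, all three of $GHom(K,A)$, $GHom(G,A)$, $GHom(H,A)$ are abelian groups and, as already observed in the text, $\beta^{\star}$ and $\alpha^{\star}$ are group homomorphisms between them (a gyro-homomorphism composed with a group homomorphism is again a gyro-homomorphism, and gyro-homomorphisms preserve the identity by Proposition \ref{s3p1}); thus the assertion is meaningful, with $\beta^{\star}(\phi)=\phi\circ\beta$ and $\alpha^{\star}(g)=g\circ\alpha$.

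For exactness at $GHom(K,A)$: since $\beta$ is surjective, if $\phi\in GHom(K,A)$ satisfies $\beta^{\star}(\phi)=\phi\circ\beta=0$ then $\phi=0$, so $\beta^{\star}$ is injective. The inclusion $\mathrm{im}\,\beta^{\star}\subseteq\ker\alpha^{\star}$ is immediate from $\beta\circ\alpha=0$ (exactness of the given sequence at $G$), since $\alpha^{\star}\beta^{\star}(\phi)=\phi\circ(\beta\circ\alpha)$ is the trivial map.

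The main point is the reverse inclusion $\ker\alpha^{\star}\subseteq\mathrm{im}\,\beta^{\star}$. Take $g\in GHom(G,A)$ with $\alpha^{\star}(g)=g\circ\alpha=0$, and put $N=\mathrm{im}\,\alpha=\ker\beta$, a normal subgroup of $G$ on which $g$ vanishes. Because $A$ is abelian, the gyro-homomorphism identity becomes $g(y^{-1}xy^{2})=g(y)^{-1}g(x)g(y)^{2}=g(x)g(y)$ for all $x,y\in G$. Taking $x=n\in N$ and writing $y^{-1}ny^{2}=(y^{-1}ny)y$, and noting that $y^{-1}ny$ ranges over all of $N$ as $n$ does (here normality of $N$ is used), one gets $g(my)=g(y)$ for all $m\in N$, $y\in G$; combining this with $yn=(yny^{-1})y$ gives also $g(yn)=g(y)$, so $g$ is constant on every coset of $N$. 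Hence $\phi\colon K\to A$, $\phi(\beta(x)):=g(x)$, is a well-defined map; it preserves the identity, and since $\beta$ is a group homomorphism we have $\beta(x_{1}\circ_{1}x_{2})=\beta(x_{1})\circ_{1}\beta(x_{2})$, whence $\phi(k_{1}\circ_{1}k_{2})=g(x_{1}\circ_{1}x_{2})=g(x_{1})\circ_{1}g(x_{2})=\phi(k_{1})\circ_{1}\phi(k_{2})$, so $\phi\in GHom(K,A)$. By construction $\beta^{\star}(\phi)=\phi\circ\beta=g$, which completes the proof.

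The only step carrying genuine content is the observation that a gyro-homomorphism into an abelian group that kills a normal subgroup $N$ is constant on the cosets of $N$; everything else is formal. One could instead argue through the natural isomorphism $GHom(K,A)\cong Hom(\check{K},A)$ (and its analogues for $G$ and $H$) together with right-exactness of $\check{H}\to\check{G}\to\check{K}\to 1$, using that $Hom(-,A)$ with $A$ abelian cannot distinguish a subgroup from its normal closure; but that route requires identifying $\ker\check{\beta}$ with the normal closure of $\mathrm{im}\,\check{\alpha}$, which is not made explicit in the preceding material (indeed the remark after Proposition \ref{s5p1} only asserts $\mathrm{im}\,\check{\alpha}\subseteq\ker\check{\beta}$), so the direct argument above is cleaner.
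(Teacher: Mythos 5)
Your proof is correct, and it takes a genuinely different route from the paper's. The paper offers essentially a one-line argument: it identifies $GHom(X,A)$ with $Hom(\check{X},A)$ via the adjoint functor $X\mapsto\check{X}$ and then invokes left exactness of $Hom(-,A)$ (hence the word ``Consequently'' before the statement). As you rightly observe, making that route fully precise requires knowing how $Ker\,\check{\beta}$ relates to the normal closure of $im(\check{\alpha})$ --- a point the paper leaves implicit and which its own remark after Proposition \ref{s5p1} (that $im(\check{\alpha})\subseteq Ker\,\check{\beta}$ may be strict) makes nontrivial. Your direct argument bypasses the $\check{(-)}$ machinery entirely: the only substantive step is your observation that for $A$ abelian the gyro-homomorphism identity collapses to $g(y^{-1}xy^{2})=g(x)g(y)$, so that setting $x=n\in N=\ker\beta$ and using normality of $N$ forces $g$ to be constant on cosets of $N$, whence $g$ factors through $\beta$. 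I verified this computation and the check that the induced map $\phi$ is a gyro-homomorphism; both are sound, as are the two formal inclusions. What your approach buys is a self-contained, elementary proof at the level of the groups themselves; what the paper's approach buys is a uniform functorial statement (Proposition \ref{s5p7} as a shadow of left exactness of $Hom(\check{(-)},A)$), at the cost of a suppressed verification. Either is acceptable; yours is arguably the more complete argument as written.
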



\section{Gyro-split central extensions and Gyro-Schur Multiplier}

Let $\cat{GRXT(-, K)}$ denote the category of gyro-split extensions by $K$. More explicitly, the objects of $\cat{GEXT(-, K)}$ are gyro-split short exact sequences 
\begin{center}
$E\equiv 1\longrightarrow H\stackrel{\alpha}{\longrightarrow}G\stackrel{\beta}{\longrightarrow}K\longrightarrow 1$
\end{center}
and a morphism from $E$ to 
\begin{center}
$E'\equiv 1\longrightarrow H'\stackrel{\alpha}{\longrightarrow}G'\stackrel{\beta}{\longrightarrow}K\longrightarrow 1$
\end{center}
is a pair $(\lambda , \mu )$ such that the triple $(\lambda , \mu , I_{K})$ is a morphism from $E$ to $E'$ in $\cat{GEXT}$. 

\n Let 
\begin{center}
$E\equiv 1\longrightarrow H\stackrel{\alpha}{\longrightarrow}G\stackrel{\beta}{\longrightarrow}K\longrightarrow 1$
\end{center}
be a gyro-split extension by $K$. Let $s$ be a gyro-splitting of $E$. Then $s$ is a gyro-homomorphism from $K$ to $G$. From the universal property of the pair $(\check{K},t_K)$, there is a unique group homomorphism $\mu$ from $\check{K}$ to $G$ such that $\mu\circ t_K=s$. In turn, $\beta \circ \mu \circ t_K=\beta\circ s=I_K=\nu_K\circ t_K$, where $\nu_K:\check{K} \rightarrow K$ is the natural homomorphism. Since $t_K(K)$ generates $\check{K}$, $\beta \circ \mu=\nu_K$. Thus, we get a group homomorphism $\lambda$ from $\check{R}_{K}$ to $H$ such that $(\lambda , \mu , I_{K})$ is a morphism from $E_{K}$ to $E$, where

\begin{center}
$E_K\equiv 1\longrightarrow \check{R}_{K} \stackrel{i_K}{\longrightarrow} \check{K}\stackrel{\nu_K}{\longrightarrow} K\longrightarrow 1$
\end{center}

\n More generally,  $E_{K}$ is a free gyro-split extension in the sense that given any gyro-split extension 
\begin{center}
$E'\equiv 1\longrightarrow H'\stackrel{\alpha}{\longrightarrow}G'\stackrel{\beta}{\longrightarrow}K'\longrightarrow 1$
\end{center}
and a gyro-homomorphism $\nu$ from $K$ to $K'$, there is a  pair $(\lambda , \mu )$ (not necessarily unique) such that $(\lambda , \mu , \nu )$ is a morphism from $E_{K}$ to $E'$. 


\vspace{0.2 cm}

\n The abstract kernel $\sigma$ associated to a central extension  is trivial. In this case, we shall denote $Z^{2}_{\sigma}(K, H)$ by $Z^{2}(K, H)$, $B^{2}_{\sigma}(K, H)$ by $B^{2}(K, H)$, $H^{2}_{\sigma}(K, H)$ by $H^{2}(K, H)$ and $GH^{2}_{\sigma}(K, H)$ by $GH^{2}(K, H)$. Let $A$ be an abelian group. We define a connecting group homomorphism $\delta$ from $ Hom(H, A)$ to $GH^{2} (K, A)$ as follows: Let $t$ be a gyro-splitting of $E$ and $f^{t}$ the corresponding gyro pairing in $GZ^{2}(K, H)$. Let $\eta\in Hom(H, A)$. Then $\eta \circ f^{t}$ is a map from $K\times K$ to $A$. Since $\eta$ is a group homomorphism, $\eta \circ f^{t}\in GZ^{2}(K, A)$. If $s$ is another gyro-splitting of $E$, then $f^{t}$ and $f^{s}$ differ by a member of $GB^{2}(K, H)$ and in turn, $\eta \circ f^{t}$ and $\eta \circ f^{s}$ differ by a member of $GB^{2}(K, A)$. This defines a group homomorphism $\delta$ from $Hom (H, A)$ to $GH^{2}(K, A)$ which is given by $\delta (\eta )\ =\ \eta \circ f^{t} + GB^{2}(K, A)$. 

\begin{prp}\label{s6p1}
For any abelian group $A$, we have the following natural fundamental exact sequence

\begin{center}
$0\longrightarrow Hom (K, A)\stackrel{\beta^{\star}}{\rightarrow}Hom (G, A)\stackrel{\alpha^{\star}}{\rightarrow}Hom (H, A)\stackrel{\delta}{\rightarrow}GH^{2}(K, A)$
\end{center}
associated to a gyro-split central  extension
\begin{center}
$E\equiv 1\longrightarrow H\stackrel{\alpha}{\longrightarrow}G\stackrel{\beta}{\longrightarrow}K\longrightarrow 1$.
\end{center}
\end{prp}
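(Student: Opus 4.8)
The plan is to follow the classical template for the exact sequence of a central extension, with $GH^{2}(K, A)$ in place of $H^{2}(K, A)$, using the connecting homomorphism $\delta$ already constructed (and shown to be a well-defined homomorphism) just before the statement. Exactness at $Hom(K, A)$ and at $Hom(G, A)$ is purely formal and uses nothing about the gyro-structure: $\beta^{\star}$ is injective because $\beta$ is surjective; $\alpha^{\star}\circ\beta^{\star} = (\beta\circ\alpha)^{\star}$ is trivial because $\beta\circ\alpha$ is; and a $\phi\in Hom(G, A)$ with $\alpha^{\star}(\phi) = 0$ vanishes on $im(\alpha) = Ker\,\beta$, hence descends along $\beta$ to a homomorphism $K\to A$ mapped to $\phi$ by $\beta^{\star}$.

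For exactness at $Hom(H, A)$, fix a gyro-splitting $t$ of $E$ and the associated gyro-pairing $f^{t}\in GZ^{2}(K, H)$, so that $t(x)t(y) = \alpha(f^{t}(x, y))t(xy)$ and (since $t(e) = e$) $f^{t}(e, e) = e$. To see $im(\alpha^{\star})\subseteq Ker\,\delta$, given $\phi\in Hom(G, A)$ put $\eta = \phi\circ\alpha$ and apply $\phi$ to the defining relation of $f^{t}$: this exhibits $\eta\circ f^{t}$ as the coboundary $\partial u$ with $u(x) = \phi(t(x))$. Since $\eta\circ f^{t}$ also lies in $GZ^{2}(K, A)$ --- this is exactly the point noted when $\delta$ was defined, namely that applying the homomorphism $\eta$ to the gyro-cocycle identity (\ref{eq1}) preserves it --- it lies in $GB^{2}(K, A) = B^{2}(K, A)\cap GZ^{2}(K, A)$, so $\delta(\eta) = 0$.

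The substantive step is $Ker\,\delta\subseteq im(\alpha^{\star})$. Suppose $\eta\circ f^{t} = \partial u$; evaluating at $(e, e)$ and using $f^{t}(e, e) = e$ forces $u(e) = 1$, so $u$ is automatically normalized. Because $t$ is a section of $\beta$, the set $t(K)$ is a right transversal to $\alpha(H) = Ker\,\beta$ in $G$, so every $g\in G$ is uniquely $g = \alpha(h)t(x)$; I would then define $\phi: G\to A$ by $\phi(\alpha(h)t(x)) = \eta(h)u(x)$ and verify it is a group homomorphism with $\phi\circ\alpha = \eta$. This is the only place the centrality hypothesis is genuinely used: centrality of $\alpha(H)$ in $G$ gives $g_{1}g_{2} = \alpha(h_{1}h_{2}f^{t}(x_{1}, x_{2}))t(x_{1}x_{2})$, and then the relation $\eta(f^{t}(x_{1}, x_{2})) = \partial u(x_{1}, x_{2}) = u(x_{1})u(x_{2})u(x_{1}x_{2})^{-1}$ makes the $u$-terms telescope so that $\phi(g_{1}g_{2}) = \phi(g_{1})\phi(g_{2})$; the identity $\phi\circ\alpha = \eta$ then follows from $u(e) = 1$.

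The main --- essentially the only --- obstacle is this last construction together with the multiplicativity check: for a nontrivial abstract kernel the transversal decomposition would carry a $\sigma^{t}$-twist and the naive formula for $\phi$ would break, so centrality must be invoked precisely to remove that twist. Naturality of the sequence in $A$ and in the extension $E$ is a routine diagram chase given the naturality of $\delta$, and I would merely record it.
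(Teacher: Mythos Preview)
Your proposal is correct and follows essentially the same approach as the paper: the paper dispatches exactness at $Hom(K,A)$ and $Hom(G,A)$ in one line by left exactness of $Hom$, then proves exactness at $Hom(H,A)$ exactly as you describe---apply a $\chi\in Hom(G,A)$ to the relation $t(x)t(y)=\alpha(f^{t}(x,y))t(xy)$ to exhibit $(\chi\circ\alpha)\circ f^{t}$ as the coboundary of $\chi\circ t$, and conversely, given $\eta\circ f^{t}=\partial g$, define $\chi(\alpha(a)t(x))=\eta(a)+g(x)$ and check it is a group homomorphism with $\alpha^{\star}(\chi)=\eta$. Your extra remarks (that $u(e)=1$ is forced, and that centrality is exactly what makes the multiplicativity check go through) are accurate elaborations of steps the paper leaves implicit.
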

\begin{proof} Since $Hom$ is a left exact functor,  it is sufficient to prove the exactness at $Hom (H, A)$. Let $\chi\in Hom (G, A)$. By the definition, $\delta (\alpha^{\star}(\chi ))\ =\ (\chi \circ \alpha \circ f^{t})\ +\ GB^{2}(K, A)$. Already, $t(x)t(y)\ =\ \alpha (f^{t}(x, y))t(xy)$ for all $x, y\in K$ and since $t$ is a gyro-splitting, $f^{t}(y^{-1}, x) + f^{t}(y^{-1}x, y^{2})\ =\ 0$ for all $x, y\in K$. Since $\chi$ is a group homomorphism, $\chi (t(x))\ +\ \chi (t(y))\ =\ \chi (\alpha (f^{t}(x, y)))\ +\ \chi (t(xy))$. Thus, we have a map $g\ =\ \chi \circ t$ from $K$ to $A$ with $g(1)\ =\ 0$ and $(\chi \circ \alpha	)\circ f^{t}\ =\ \partial g$, where $\partial g(x,y)=g(y)-g(x,y)+g(x)$. This means that $\delta \circ \alpha^{\star}\ =\ 0$. It follows that $im( \alpha^{\star})\subseteq Ker \delta$. Next, let $\eta\in Ker \delta$. Then $\eta \circ f^{t}\in GB^{2} (K, A)$. Hence there is a map $g$ from $K$ to $A$ with $g(1)\ =\ 0$ such that
\begin{center}
$\eta (f^{t}(x, y))\ =\ g(y) - g(xy) + g(x)$
\end{center}
for all $x, y\in K$. Every element of $G$ is uniquely expressible as $\alpha (a)t(x),\ a\in H, x\in K$. Define a map $\chi$ from $G$ to $A$ by $\chi (\alpha (a)t(x))\ =\ \eta (a) + g(x)$. It can be easily seen that $\chi\in Hom (G, A)$ such that $\eta\ =\ \chi \circ \alpha\ =\ \alpha^{\star} (\chi )$. It follows that $Ker \delta\subseteq im( \alpha^{\star})$. 
\end{proof}

In particular, for an abelian group $H$, we have the following exact sequence:
\begin{center}
$0\longrightarrow Hom (K, H)\stackrel{\beta^{\star}}{\rightarrow}Hom (G, H)\stackrel{\alpha^{\star}}{\rightarrow}Hom (H, H)\stackrel{\delta}{\rightarrow}GH^{2}(K, H)$.
\end{center}

\begin{rmk}
The sequence
\begin{center}
$0\longrightarrow GHom (K, A)\stackrel{\beta^{\star}}{\rightarrow}GHom (G, A)\stackrel{\alpha^{\star}}{\rightarrow}GHom (H, A)\stackrel{\delta}{\rightarrow}GH^{2}(K, A)$.
\end{center}
need not be exact. Indeed, $\delta \circ \alpha^{\star}$ need not be $0$. However, $Ker\delta \subseteq im( \alpha^{\star})$.
\end{rmk}

\begin{prp}\label{s6p2}
The extension 
\begin{center}
$\check{E}_{K}\equiv  1\longrightarrow \check{R}_{K}/[\check{R}_{K}, \check{K}]\stackrel{\overline{i_{K}}}{\rightarrow}\check{K}/[\check{R}_{K}, \check{K}]\stackrel{\overline{\nu_{K}}}{\rightarrow}K\longrightarrow 1$
\end{center}
is a free gyro-split central extension of $K$ in the sense that given any gyro-split central extension
\begin{center}
$E'\equiv 1\longrightarrow H'\stackrel{\alpha '}{\longrightarrow}G'\stackrel{\beta '}{\longrightarrow}K'\longrightarrow 1$
\end{center}
and a gyro-homomorphism $\gamma$ from $K$ to $K'$, there is a pair $(\rho , \eta )$ (not necessarily unique) of homomorphism such that $(\rho , \eta , \gamma )$ is a morphism from $\check{E}_{K}$ to $E'$.
\end{prp}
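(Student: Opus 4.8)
The plan is to deduce the universal property of $\check{E}_K$ from the freeness of the gyro-split extension $E_K$ together with the hypothesis that $E'$ is \emph{central}; passing to the quotient by $[\check{R}_K,\check{K}]$ is precisely what this centrality forces.

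First I would check that $\check{E}_K$ is indeed a gyro-split central extension. The subgroup $[\check{R}_K,\check{K}]$ is normal in $\check{K}$ (since $\check{R}_K$ is) and lies in $\check{R}_K=\ker\nu_K$, so $\check{E}_K$ is a genuine short exact sequence whose kernel $\check{R}_K/[\check{R}_K,\check{K}]$ is central in $\check{K}/[\check{R}_K,\check{K}]$ by construction. If $q\colon\check{K}\to\check{K}/[\check{R}_K,\check{K}]$ denotes the quotient homomorphism, then $q\circ t_K$ is a gyro-homomorphism (the group homomorphism $q$ composed with the gyro-homomorphism $t_K$) and $\overline{\nu_K}\circ(q\circ t_K)=\nu_K\circ t_K=I_K$, so $q\circ t_K$ is a gyro-splitting of $\check{E}_K$.

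Next, given a gyro-split central extension $E'$ and a gyro-homomorphism $\gamma\colon K\to K'$, I would invoke the freeness of $E_K$ (Theorem \ref{s5p2} and the remark preceding Proposition \ref{s6p1}) to obtain group homomorphisms $\lambda\colon\check{R}_K\to H'$ and $\mu\colon\check{K}\to G'$ with $(\lambda,\mu,\gamma)$ a morphism from $E_K$ to $E'$; thus $\mu\circ i_K=\alpha'\circ\lambda$ and $\beta'\circ\mu=\gamma\circ\nu_K$. In particular $\mu(\check{R}_K)\subseteq\alpha'(H')$. Because $E'$ is central, $\alpha'(H')$ lies in the center of $G'$, so $\mu([\check{R}_K,\check{K}])\subseteq[\alpha'(H'),G']=\{1\}$. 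Hence $\mu$ factors uniquely through $q$ as $\mu=\eta\circ q$ with $\eta\colon\check{K}/[\check{R}_K,\check{K}]\to G'$ a group homomorphism; and since $\alpha'$ is injective and $\alpha'(\lambda([\check{R}_K,\check{K}]))=\mu([\check{R}_K,\check{K}])=\{1\}$, also $\lambda([\check{R}_K,\check{K}])=\{1\}$, so $\lambda$ factors uniquely as $\lambda=\rho\circ q'$ with $\rho\colon\check{R}_K/[\check{R}_K,\check{K}]\to H'$ a group homomorphism, where $q'\colon\check{R}_K\to\check{R}_K/[\check{R}_K,\check{K}]$ is the restriction of $q$.

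It then remains to verify that $(\rho,\eta,\gamma)$ is a morphism from $\check{E}_K$ to $E'$. Using $\overline{i_K}\circ q'=q\circ i_K$, $\overline{\nu_K}\circ q=\nu_K$, and the surjectivity of $q$ and $q'$, the identities $\eta\circ\overline{i_K}=\alpha'\circ\rho$ and $\beta'\circ\eta=\gamma\circ\overline{\nu_K}$ follow from $\mu\circ i_K=\alpha'\circ\lambda$ and $\beta'\circ\mu=\gamma\circ\nu_K$ by cancelling $q'$ and $q$ on the right. The non-uniqueness of $(\rho,\eta)$ is inherited from that of $(\lambda,\mu)$. The only step that is not pure diagram chasing is $\mu([\check{R}_K,\check{K}])=\{1\}$, which is where the centrality of $E'$ is essential; this is the crux of the argument and the reason $\check{K}/[\check{R}_K,\check{K}]$, rather than $\check{K}$, is the appropriate universal object.
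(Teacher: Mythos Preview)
Your proposal is correct and follows essentially the same approach as the paper's own proof: invoke the freeness of $E_K$ to obtain a morphism $(\lambda,\mu,\gamma)$, then use the centrality of $E'$ to factor through the quotient by $[\check{R}_K,\check{K}]$. The paper compresses the entire argument into three sentences, whereas you have spelled out the key step $\mu([\check{R}_K,\check{K}])\subseteq[\alpha'(H'),G']=\{1\}$ and the verification that $(\rho,\eta,\gamma)$ is a morphism, which is exactly what the paper means by ``Since $E'$ is a central extension, $(\lambda,\mu)$ induces a pair $(\rho,\eta)$.''
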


\begin{proof} Evidently, $\check{E}_{K}$ is a gyro-split central extension. Again since $E_K$ is a free gyro-split  extension, there is a morphism $(\lambda , \mu , \gamma )$ from $E_{K}$ to $E'$. Since $E'$ is a central extension, $(\lambda , \mu )$ induces a pair $(\rho , \eta )$ such that $(\rho , \eta , \gamma )$ is a morphism from $\check{E}_{K}$ to $E'$.
\end{proof}

\begin{prp}\label{s6p3}
Let 
\begin{center}
$E\equiv 1\longrightarrow H\stackrel{\alpha}{\longrightarrow}G\stackrel{\beta}{\longrightarrow}K\longrightarrow 1$
\end{center}
be a free gyro-split central extension and $A$ be an abelian group. Then the map $\delta$ from $Hom (H, A)$ to $GH^{2}(K, A)$ is surjective. More explicitly, 
\begin{center}
$0\longrightarrow Hom (K, A)\stackrel{\beta^{\star}}{\rightarrow}Hom (G, A)\stackrel{\alpha^{\star}}{\rightarrow}Hom (H, A)\stackrel{\delta}{\rightarrow}GH^{2}(K, A)\longrightarrow 0$
\end{center} 
is exact.
\end{prp}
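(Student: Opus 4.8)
Since Proposition \ref{s6p1} already yields exactness of the displayed four-term sequence at $Hom(K,A)$, $Hom(G,A)$ and $Hom(H,A)$, the only thing left to prove is that $\delta\colon Hom(H,A)\to GH^{2}(K,A)$ is surjective. The plan is to start from an arbitrary class in $GH^{2}(K,A)$, represent it by a gyro-cocycle $f\in GZ^{2}(K,A)$, and exhibit that class in the image of $\delta$ by producing a suitable group homomorphism $\rho\colon H\to A$.

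First I would build, out of $f$, the gyro-split central extension $E_{f}\equiv 1\to A\stackrel{\alpha_{f}}{\longrightarrow}G_{f}\stackrel{\beta_{f}}{\longrightarrow}K\to 1$ attached to the gyro-factor system consisting of $f$ together with the trivial action, along with a gyro-splitting $t_{f}$ whose gyro-pairing is $f$; this is exactly the construction recorded just before Proposition \ref{s5p6}, drawn from \cite[Proposition 10.1.11, p. 373]{lalb}. Because $E$ is assumed to be a \emph{free} gyro-split central extension, its universal property (the one formulated in Proposition \ref{s6p2}) applied to the target $E_{f}$ and the gyro-homomorphism $I_{K}\colon K\to K$ supplies a morphism $(\rho,\eta,I_{K})$ from $E$ to $E_{f}$ in $\cat{GEXT}$; here $\rho$ is a group homomorphism from $H$ to $A$. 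The claim to be verified is $\delta(\rho)=[f]$.

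To prove the claim I would fix a gyro-splitting $t$ of $E$ and its gyro-pairing $f^{t}$, so that by definition $\delta(\rho)=\rho\circ f^{t}+GB^{2}(K,A)$. From $\beta_{f}\circ\eta=\beta$ and $\beta\circ t=I_{K}$ one gets $\eta(t(x))t_{f}(x)^{-1}\in\ker\beta_{f}=\alpha_{f}(A)$, hence a unique map $g\colon K\to A$ with $g(1)=0$ and $\eta(t(x))=\alpha_{f}(g(x))t_{f}(x)$ (the normalisation $g(1)=0$ uses that gyro-homomorphisms preserve identity, Proposition \ref{s3p1}). Then I would push the defining relation $t(x)t(y)=\alpha(f^{t}(x,y))t(xy)$ through the group homomorphism $\eta$, substitute this expression, and use that $\alpha_{f}(A)$ is central in $G_{f}$ together with $t_{f}(x)t_{f}(y)=\alpha_{f}(f(x,y))t_{f}(xy)$. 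This is the same bookkeeping already carried out in Section 5 for a general morphism of gyro-factor systems (equations \eqref{eq2}--\eqref{eq5}), specialised to a central target, and it should collapse to
\[
\rho\circ f^{t}-f=\partial g,\qquad \partial g(x,y)=g(y)-g(xy)+g(x).
\]

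The final — and really the only non-mechanical — step is to observe that $\partial g$ lies in $GB^{2}(K,A)$ rather than merely in $B^{2}(K,A)$: it is a coboundary by construction, and it is the difference $\rho\circ f^{t}-f$ of the two gyro-cocycles $\rho\circ f^{t}$ and $f$, so it is itself a gyro-cocycle because $GZ^{2}(K,A)$ is a subgroup of $Z^{2}(K,A)$; hence $\partial g\in B^{2}(K,A)\cap GZ^{2}(K,A)=GB^{2}(K,A)$. Therefore $\delta(\rho)=\rho\circ f^{t}+GB^{2}(K,A)=f+GB^{2}(K,A)=[f]$, and since $[f]$ was arbitrary, $\delta$ is surjective; combined with Proposition \ref{s6p1} this gives the asserted exactness. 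I expect the main obstacle to be precisely this last point — confirming that the coboundary manufactured by the comparison homomorphism $\eta$ is automatically a \emph{gyro}-coboundary — together with the care needed to keep track of the trivial action and of the identities \eqref{eq1} defining gyro-pairings while running the Section 5 computation.
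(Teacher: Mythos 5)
Your proposal is correct and follows essentially the same route as the paper: represent the class by a gyro-cocycle $f$, form the associated gyro-split central extension, use freeness to obtain a morphism $(\lambda,\mu,I_K)$, and compare gyro-pairings. The paper organizes the last step slightly more economically by noting that $t''=\mu\circ t$ is itself a gyro-splitting of the extension built from $f$, so that $\lambda\circ f^{t}=f^{t''}$ holds exactly and the gyro-coboundary issue you flag is absorbed into the already-used fact that two gyro-splittings of the same extension have gyro-pairings differing by an element of $GB^{2}(K,A)$; your explicit verification that $\partial g\in B^{2}(K,A)\cap GZ^{2}(K,A)=GB^{2}(K,A)$ settles the same point correctly.
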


\begin{proof} Let $f\in GZ^{2}(K, A)$. Then $(K, A, \sigma , f)$ is a gyro-factor system with $\sigma$ being trivial. The corresponding associated extension 
\begin{center}
$E'\equiv 0\longrightarrow A\stackrel{\alpha '}{\longrightarrow}G'\stackrel{\beta '}{\longrightarrow}K\longrightarrow 1$
\end{center}
is a gyro-split central extension with a gyro-splitting $t'$ such that $t'(x)t'(y)\ =\ \alpha '(f(x, y))t'(xy)$ for all $x, y\in K$. Since $E$ is a free gyro-split central extension, we have a group homomorphism $\lambda$ from $H$ to $A$ and a group homomorphism $\mu$ from $G$ to $G'$ such that $(\lambda , \mu , I_{K})$ is a morphism from $E$ to $E'$. Let $t$ be a  gyro-splitting of $E$. Then $\beta '(\mu (t(x)))\ =\ \beta (t(x))\ =\ x$ for all $x\in K$. Hence $t''\ =\ \mu \circ t$ is a gyro-splitting of $E'$. Thus, $f^{t''} + GB^{2}(K, A)\ =\ f + GB^{2}(K, A)$. Now, $t(x)t(y)\ =\ \alpha (f^{t}(x, y))t(xy)$ for all $x,y\in K$. Further, 
\begin{center}
$\alpha '(f^{t''}(x, y))t''(xy)\ =\ t''(x)t''(y)\ =\ \mu (t(x))\mu (t(y))\ =\ \mu (t(x)t(y))\ =\ \mu (\alpha (f^{t}(x, y)))\mu (t(xy))\ =\ \mu (\alpha (f^{t}(x, y)))t''(xy)\ =\ \alpha '(\lambda (f^{t}(x, y)))t''(xy)$.
\end{center}
This shows that $\alpha '(\lambda (f^{t}(x, y)))\ =\ \alpha '(f^{t''}(x, y))$. Since $\alpha '$ is injective, $\lambda (f^{t}(x, y))\ =\ f^{t''}(x, y)$. By the definition $\delta (\lambda )\ =\ f^{t''} + GB^{2}(K, A)\ =\ f + GB^{2}(K, A)$. This shows that $\delta$ is surjective.
\end{proof}

\begin{prp}\label{s6p4}
Let 
\begin{center}
$E\equiv 1\longrightarrow H\stackrel{\alpha }{\longrightarrow}G\stackrel{\beta }{\longrightarrow}K\longrightarrow 1$
\end{center}
be a gyro-split central extension by $K$, and $D$ be a divisible abelian group. Then the image of $\delta$ in the fundamental exact sequence
\begin{center}
$0\longrightarrow Hom (K, D)\stackrel{\beta^{\star}}{\rightarrow}Hom (G, D)\stackrel{\alpha^{\star}}{\rightarrow}Hom (H, D)\stackrel{\delta}{\rightarrow}GH^{2}(K, D)$
\end{center}
is isomorphic to $Hom ([G, G]\bigcap \alpha (H), D)$. In particular, if the extension $E$ is a free gyro-split central extension, then $GH^{2}(K, D)$ is isomorphic to $Hom ([G, G]\bigcap \alpha (H), D)$.
\end{prp}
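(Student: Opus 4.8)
The plan is to run a cokernel computation through the fundamental exact sequence of Proposition \ref{s6p1}, exploiting divisibility of $D$ to get exactness on the right. First I would recall that for a divisible abelian group $D$ the functor $\mathrm{Hom}(-,D)$ is exact, so applying it to the short exact sequence $1\to [G,G]\cap\alpha(H)\to \alpha(H)\to \alpha(H)/([G,G]\cap\alpha(H))\to 1$ (or, more to the point, to $1\to [G,G]\to G\to G^{ab}\to 1$) turns inclusions into surjections. The key observation is that $\mathrm{im}\,\delta = \mathrm{Hom}(H,D)/\ker\delta = \mathrm{Hom}(H,D)/\mathrm{im}\,\alpha^{\star}$ by exactness at $\mathrm{Hom}(H,D)$ in Proposition \ref{s6p1}, and $\mathrm{im}\,\alpha^{\star}$ consists precisely of those homomorphisms $H\to D$ that extend along $\alpha$ to all of $G$.

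The main step is to identify this cokernel. A homomorphism $\eta:H\to D$ (transported via $\alpha$ to $\alpha(H)\to D$) extends to $G\to D$ if and only if it kills $[G,G]\cap\alpha(H)$: one direction is clear since any homomorphism $G\to D$ factors through $G^{ab}$; for the converse, if $\eta$ vanishes on $[G,G]\cap\alpha(H)$ then $\eta$ induces a map $\alpha(H)/([G,G]\cap\alpha(H))\hookrightarrow G/[G,G]$ to $D$, and divisibility of $D$ lets us extend this from the subgroup to all of $G^{ab}$, hence to $G$. Therefore $\mathrm{im}\,\alpha^{\star}=\{\eta\in\mathrm{Hom}(H,D)\mid \eta|_{\alpha^{-1}([G,G]\cap\alpha(H))}=0\}$, and restriction gives a well-defined injection $\mathrm{Hom}(H,D)/\mathrm{im}\,\alpha^{\star}\to \mathrm{Hom}([G,G]\cap\alpha(H),D)$. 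Surjectivity of this restriction map is again exactly the divisibility of $D$: any homomorphism on the subgroup $[G,G]\cap\alpha(H)$ of the abelian group $\alpha(H)\cong H$ extends to $H$. This yields $\mathrm{im}\,\delta\cong\mathrm{Hom}([G,G]\cap\alpha(H),D)$.

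For the "in particular" clause, when $E$ is a free gyro-split central extension Proposition \ref{s6p3} gives exactness of $0\to\mathrm{Hom}(K,D)\to\mathrm{Hom}(G,D)\to\mathrm{Hom}(H,D)\xrightarrow{\delta}GH^{2}(K,D)\to 0$, so $\delta$ is surjective and $GH^{2}(K,D)=\mathrm{im}\,\delta\cong\mathrm{Hom}([G,G]\cap\alpha(H),D)$.

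I expect the main obstacle to be the extension-of-homomorphisms argument in the first half of the second paragraph — checking carefully that vanishing on $[G,G]\cap\alpha(H)$ is not only necessary but sufficient for extendability to $G$, which requires combining the universal property of the abelianization with the injectivity (in the categorical sense, i.e.\ divisibility) of $D$ to push the extension past the subgroup $\alpha(H)/([G,G]\cap\alpha(H))\subseteq G^{ab}$. Everything else is diagram-chasing in Proposition \ref{s6p1} and routine use of $\mathrm{Hom}(-,D)$ exactness.
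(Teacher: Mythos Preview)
Your proposal is correct and follows essentially the same approach as the paper: both arguments identify $\mathrm{im}\,\delta\cong \mathrm{Hom}(H,D)/\mathrm{im}\,\alpha^{\star}$ from exactness of Proposition~\ref{s6p1}, use divisibility of $D$ to show that $\mathrm{im}\,\alpha^{\star}$ consists exactly of those $\eta$ factoring through $H/(H\cap\alpha^{-1}[G,G])$, apply exactness of $\mathrm{Hom}(-,D)$ to the short exact sequence on $H$ to compute the cokernel, and invoke Proposition~\ref{s6p3} for the free case. The only cosmetic difference is that the paper packages the extension step as a commutative square $\rho^{\star}\circ\overline{\alpha}^{\star}=\alpha^{\star}\circ\nu^{\star}$ (with $\nu:G\to G^{ab}$ and $\rho:H\to H/(H\cap\alpha^{-1}[G,G])$) rather than arguing directly in terms of extendability.
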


\begin{proof} By the fundamental theorem of homomorphism, 
\[im(\delta)\ \approx Hom (H, D)/Ker \delta\ = Hom (H, D)/im( \alpha^{\star}).\] 

The map $\alpha$ induces an injective group homomorphism $\overline{\alpha}$ from $H/(H\bigcap \alpha^{-1}([G, G])$ to $G/[G, G]$. Since $D$ is divisible, $\overline{\alpha}^{\star}$ is a surjective group homomorphism from $Hom (G/[G, G], D)$ to $Hom ( H/(H\bigcap \alpha^{-1}([G, G]), D))$. Also, since $D$ is abelian, $\nu^{\star}$ from $Hom (G/[G, G], D)$ to $Hom (G, D)$ is an isomorphism, where $\nu:G\rightarrow G/[G,G]$ is the quotient map.  Further, $\rho^{\star}o\overline{\alpha^{\star}}\ =\ \alpha^{\star}o\nu^{\star}$, where $\rho$ is the quotient map from $H$ to $H/(H\bigcap \alpha^{-1}([G, G]))$. It follows that the image of $\alpha^{\star}$ is that of $\rho^{\star}$. Again, since $D$ is divisible, the following sequence is exact:
\begin{center}
$0\longrightarrow Hom ( H/(H\bigcap \alpha^{-1}([G, G]), D))\stackrel{\rho^{\star}}{\rightarrow}Hom (H, D)\stackrel{i^{\star}}{\rightarrow}Hom ( (H\bigcap \alpha^{-1}([G, G])), D)\longrightarrow 0$.
\end{center}
Thus, 
\begin{center}
$Hom (H, D)/im(\rho^{\star})\ \approx Hom ( (H\bigcap \alpha^{-1}([G, G])), D)\ \approx\ Hom (([G, G]\bigcap \alpha (H)), D)$.
\end{center}
The last assertion follows from the proposition \ref{s6p3}.
\end{proof}
 
\begin{cor}\label{s6p4c1}
$GH^{2}(K, \C^{\star})\approx Hom (([\check{K}, \check{K}]\bigcap \check{R}_{K} )/[\check{K}, \check{R}_{K}], \C^{\star})$.
More generally, if $\langle X; S \rangle$ is a presentation of $K$, then
\begin{center}
$GH^{2}(K, \C^{\star})\approx Hom (([F(F(X)), F(F(X))]\bigcap \hat{S} )/[F(F(X)), \hat{S}], \C^{\star})$.
\end{center}
\end{cor}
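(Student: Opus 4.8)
The plan is to specialize Proposition \ref{s6p4} to the universal object furnished by Proposition \ref{s6p2}. By Proposition \ref{s6p2} the extension
\[
\check{E}_{K}\equiv 1\longrightarrow \check{R}_{K}/[\check{R}_{K},\check{K}]\stackrel{\overline{i_{K}}}{\longrightarrow}\check{K}/[\check{R}_{K},\check{K}]\stackrel{\overline{\nu_{K}}}{\longrightarrow}K\longrightarrow 1
\]
is a free gyro-split central extension of $K$, and $\C^{\star}$ is a divisible abelian group. Hence Proposition \ref{s6p4}, applied with $E=\check{E}_{K}$, $G=\check{K}/[\check{R}_{K},\check{K}]$, $H=\check{R}_{K}/[\check{R}_{K},\check{K}]$, $\alpha=\overline{i_{K}}$, and $D=\C^{\star}$, gives immediately
\[
GH^{2}(K,\C^{\star})\ \approx\ Hom([G,G]\cap\overline{i_{K}}(H),\ \C^{\star}),
\]
so everything is reduced to identifying the subgroup $[G,G]\cap\overline{i_{K}}(H)$ of $\check{K}/[\check{R}_{K},\check{K}]$.

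For that, write $N=[\check{R}_{K},\check{K}]$. Since $\check{R}_{K}$ is the kernel of $\nu_{K}$ it is normal in $\check{K}$, whence $N$ is normal in $\check{K}$ and $N\subseteq[\check{K},\check{K}]\cap\check{R}_{K}$. Therefore the commutator subgroup of $\check{K}/N$ is $[\check{K},\check{K}]N/N=[\check{K},\check{K}]/N$, the image of $H=\check{R}_{K}/N$ under $\overline{i_{K}}$ is $\check{R}_{K}N/N=\check{R}_{K}/N$, and by the correspondence theorem
\[
[G,G]\cap\overline{i_{K}}(H)\ =\ ([\check{K},\check{K}]/N)\cap(\check{R}_{K}/N)\ =\ ([\check{K},\check{K}]\cap\check{R}_{K})/N.
\]
As $N=[\check{R}_{K},\check{K}]=[\check{K},\check{R}_{K}]$, substituting this into the previous display yields the first isomorphism.

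For the presentation-dependent formula I would transport the first isomorphism along the isomorphism $\rho\colon F(F(X))/\langle\check{T}\rangle\to\check{K}$ constructed in Section~4, for which $(\rho^{-1}|_{\check{R}_{K}},\rho^{-1},I_{K})$ is an equivalence from $(\ref{eqex})$ to $(\ref{eqex1})$ and which restricts to an isomorphism $\langle\hat{S}\rangle/\langle\check{T}\rangle\to\check{R}_{K}$. A group isomorphism carries commutator subgroups onto commutator subgroups and preserves the lattice of normal subgroups, so under $\rho^{-1}$ the subgroup $[\check{K},\check{K}]$ corresponds to $[F(F(X)),F(F(X))]\langle\check{T}\rangle/\langle\check{T}\rangle$, the subgroup $\check{R}_{K}$ to $\langle\hat{S}\rangle/\langle\check{T}\rangle$, and $[\check{K},\check{R}_{K}]$ to $[F(F(X)),\langle\hat{S}\rangle]\langle\check{T}\rangle/\langle\check{T}\rangle$, where $[F(F(X)),\langle\hat{S}\rangle]$ is the normal subgroup generated by all $[g,s]$ with $g\in F(F(X))$ and $s\in\hat{S}$ (the usual description of $[\Gamma,M]$ when $M$ is the normal closure of $S$ in $\Gamma$). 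Substituting these into the first formula and simplifying the resulting intersection by means of the Dedekind modular law, using $\langle\check{T}\rangle\subseteq\langle\hat{S}\rangle$ (established in Section~4), one arrives at the stated isomorphism with $([F(F(X)),F(F(X))]\cap\hat{S})/[F(F(X)),\hat{S}]$.

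The first two steps are just invocations of Propositions \ref{s6p2} and \ref{s6p4} plus elementary subgroup arithmetic. The place where genuine care is needed is the transport in the last step: $\langle\check{T}\rangle$ lies in $\langle\hat{S}\rangle$ but not in $[F(F(X)),F(F(X))]$, so the intersection $[F(F(X)),F(F(X))]\langle\check{T}\rangle\cap\langle\hat{S}\rangle$ has to be treated via the modular law rather than by naive cancellation, and one has to follow the $\langle\check{T}\rangle$-factors carefully through the successive quotients to recover the displayed form.
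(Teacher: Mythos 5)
Your derivation of the first isomorphism is correct and is exactly the intended argument: Proposition \ref{s6p2} supplies the free gyro-split central extension $\check{E}_K$, Proposition \ref{s6p4} applies because $\C^{\star}$ is divisible, and the identification $[G,G]\cap\overline{i_K}(H)=([\check K,\check K]\cap\check R_K)/[\check R_K,\check K]$ is the routine computation you give, using $[\check R_K,\check K]\subseteq[\check K,\check K]\cap\check R_K$. The paper offers no written proof of the corollary, and this is surely the route it has in mind.

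The second half has a genuine gap, and it sits precisely at the spot you flag as delicate but then wave through. Write $F=F(F(X))$, $T=\langle\check T\rangle$, $S^{*}=\langle\hat S\rangle$, so $\check K\cong F/T$ and $\check R_K\cong S^{*}/T$. Transport gives $[\check K,\check K]\cong[F,F]T/T$ and $[\check K,\check R_K]\cong[F,S^{*}]T/T$, and the modular law (legitimate since $T\le S^{*}$) gives $[F,F]T\cap S^{*}=([F,F]\cap S^{*})T$. But then
\[
\frac{[\check K,\check K]\cap\check R_K}{[\check K,\check R_K]}\;\cong\;\frac{([F,F]\cap S^{*})T}{[F,S^{*}]T}\;\cong\;\frac{[F,F]\cap S^{*}}{[F,S^{*}]\cdot([F,F]\cap T)},
\]
the last step being the second isomorphism theorem followed by another application of the modular law (using $[F,S^{*}]\le[F,F]\cap S^{*}$ and $T\le S^{*}$). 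This agrees with the asserted quotient $([F,F]\cap S^{*})/[F,S^{*}]$ if and only if $[F,F]\cap T\subseteq[F,S^{*}]$. You never state or prove this containment, and it is not a formal consequence of the modular law or of $T\le S^{*}$: the generators of $\check T$ are not commutators, and $T\cap[F,F]$ can a priori have nontrivial image in the relation module $S^{*}/[F,S^{*}]$. Note that the right-hand side of the asserted formula is exactly the Hopf quotient computing $H_2(K,\Z)$ for the free presentation of $K$ with kernel $S^{*}$, so the missing containment is equivalent to the injectivity of the canonical surjection from $H_2(K,\Z)$ onto $([\check K,\check K]\cap\check R_K)/[\check K,\check R_K]$ --- a substantive claim that needs its own argument (or may fail, in which case the displayed formula itself is in doubt). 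As written, your proof of the second isomorphism is incomplete at this point.
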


\n Since $GH^{2}(K, \C^{\star})$ is a subgroup of $H^{2}(K, \C^{\star})$, the following corollary is a consequence of the Schur-Hopf Formula.

\begin{cor}\label{s6p4c2}
If $K$ is finite, then
\begin{center}
$GH^{2}(K, \C^{\star})\approx  [F(F(X)), F(F(X))]\bigcap \hat{S} /[F(F(X)), \hat{S}]$.
\end{center}
\end{cor}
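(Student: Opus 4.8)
The plan is to obtain this as a corollary of Corollary \ref{s6p4c1} together with the finiteness of the Schur multiplier of a finite group. Put $M\ =\ ([F(F(X)), F(F(X))]\bigcap \hat{S})/[F(F(X)), \hat{S}]$, where, as always in the Schur-Hopf setting, $\hat{S}$ is to be read as the normal subgroup of $F(F(X))$ it generates. Corollary \ref{s6p4c1} already supplies a group isomorphism $GH^{2}(K, \C^{\star})\approx Hom (M, \C^{\star})$, so it is enough to prove that $M$ is finite: for a finite abelian group its character group $Hom (M, \C^{\star})\ =\ Hom (M, \Q/\Z)$ is (non-canonically) isomorphic to $M$, and the corollary drops out.

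First I would argue that $GH^{2}(K, \C^{\star})$ is finite. By construction $GZ^{2}(K, \C^{\star})$ is a subgroup of $Z^{2}(K, \C^{\star})$ and $GB^{2}(K, \C^{\star})\ =\ B^{2}(K, \C^{\star})\bigcap GZ^{2}(K, \C^{\star})$, so the second isomorphism theorem realizes $GH^{2}(K, \C^{\star})$ as a subgroup of $H^{2}(K, \C^{\star})$, as noted just before the statement. For finite $K$ the group $H^{2}(K, \C^{\star})$ is the Schur multiplier of $K$, and the Schur-Hopf formula exhibits it as $(R\bigcap [F, F])/[F, R]$ for a finite presentation $F/R$ of $K$; since $R$ has finite index in the finitely generated free group $F$, this quotient is a finitely generated abelian group, and being the Schur multiplier of a finite group it is torsion, hence finite. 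Therefore $GH^{2}(K, \C^{\star})$, and with it $Hom (M, \C^{\star})$, is finite.

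It remains to pass from the finiteness of $Hom (M, \C^{\star})$ to that of $M$; this is the only step that is not purely formal. Because $\C^{\star}$ is divisible it is an injective $\Z$-module, so $Hom (-, \C^{\star})$ is exact and every monomorphism $N\hookrightarrow M$ induces an epimorphism $Hom (M, \C^{\star})\longrightarrow Hom (N, \C^{\star})$. Were $M$ to contain an element of infinite order, taking $N\ =\ \Z$ would give an epimorphism onto $Hom (\Z, \C^{\star})\approx \C^{\star}$, which is infinite; hence $M$ is a torsion group, and then $Hom (M, \C^{\star})\ =\ Hom (M, \Q/\Z)$. Since the evaluation map $M\longrightarrow Hom (Hom (M, \Q/\Z), \Q/\Z)$ is injective with finite target, $M$ is finite, and the proof is complete. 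A cleaner way to organize the same point: for finite $K$ the group $\check{K}$ is finitely presented and $\check{R}_{K}\ =\ Ker (\nu_{K})$ has finite index in it, so by Reidemeister-Schreier $\check{R}_{K}$ is finitely generated; hence so are $\check{R}_{K}/[\check{R}_{K}, \check{K}]$ and its subgroup $([\check{K}, \check{K}]\bigcap \check{R}_{K})/[\check{K}, \check{R}_{K}]\approx M$, and the finiteness of $Hom (M, \C^{\star})$ then forces the torsion-free rank of $M$ to vanish.
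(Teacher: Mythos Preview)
Your proof is correct and follows precisely the approach the paper sketches in the single sentence preceding the corollary: use Corollary~\ref{s6p4c1} to identify $GH^{2}(K,\C^{\star})$ with $Hom(M,\C^{\star})$, invoke that $GH^{2}(K,\C^{\star})\subseteq H^{2}(K,\C^{\star})$ is finite by Schur--Hopf, and then pass from finiteness of $Hom(M,\C^{\star})$ to finiteness of the abelian group $M$. The paper leaves this last step entirely implicit, whereas you supply it carefully in two ways; both your Pontryagin-duality argument and your Reidemeister--Schreier argument are valid, the latter being the more direct since it shows outright that $M$ is a finitely generated abelian group.
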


\n We shall term $GH^{2}(K, \C^{\star})$ and also $([\check{K}, \check{K}]\bigcap \check{R}_{K} )/[\check{K}, \check{R}_{K}]$  as gyro-Schur Multipliers of $K$. Note that they are same provided that $K$ is finite. Also observe that $K\mapsto ([\check{K}, \check{K}]\bigcap \check{R}_{K} )/[\check{K}, \check{R}_{K}]$ defines a functor from $\cat{GP}$ to itself. 

\vspace{0.2 cm}

\n The proof of the following proposition is an easy verification.

\begin{prp}
Let $K$ be a group. Then the right gyro-group operation $\circ_1$ on $K$ satisfies the following relations:

\begin{enumerate}
	\item[$(i)$] $(xy)\circ_1 z\ =\ x^{z}(y\circ_1 z)$, and also
	\item[$(ii)$] $x\circ_1 (yz)\ =\ (x^{y}\circ_1 z)y^{z}$.
\end{enumerate}

\n for each $x,y, z\in K$, where $x^y=y^{-1}xy$.
\end{prp}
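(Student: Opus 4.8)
The plan is to verify the two identities by direct computation using the definition $x \circ_1 z = z^{-1}xz^2$, since the final proposition is stated as "an easy verification." For part $(i)$, I would start from the left-hand side $(xy)\circ_1 z = z^{-1}(xy)z^2 = z^{-1}xz \cdot z^{-1}yz^2 = x^{z}(y\circ_1 z)$, where the last equality uses $x^z = z^{-1}xz$ and $y \circ_1 z = z^{-1}yz^2$. This is a one-line insertion of $zz^{-1}$ in the middle.

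For part $(ii)$, I would compute $x \circ_1 (yz) = (yz)^{-1}x(yz)^2 = z^{-1}y^{-1}x yz yz$. The goal is to massage this into $(x^{y}\circ_1 z)y^{z}$. Working out the target: $x^y = y^{-1}xy$, so $x^y \circ_1 z = z^{-1}(y^{-1}xy)z^2$, and then $(x^y \circ_1 z)y^z = z^{-1}y^{-1}xyz^2 \cdot z^{-1}yz = z^{-1}y^{-1}xyz \cdot yz$. Comparing with $z^{-1}y^{-1}xyzyz$ from the left-hand side, these agree. So again the verification is a short rearrangement, inserting $z^2 z^{-1} = z$ appropriately.

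I do not anticipate a genuine obstacle here; the only care needed is bookkeeping of the conjugation notation $x^y = y^{-1}xy$ and making sure the exponents on $z$ are tracked correctly (the $z^2$ versus $z$ in the two occurrences). I would present both computations as short displayed chains of equalities. If brevity is desired I can simply remark that expanding both sides via the definition and cancelling yields the claimed identities; otherwise I would write out the two lines explicitly. Since the paper already flags this as routine, a terse proof consisting of the two expansions above suffices, and I would format it as a single \begin{proof}...\end{proof} block containing two short align-free displays (or inline equalities) for parts $(i)$ and $(ii)$.
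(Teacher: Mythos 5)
Your verification is correct: both identities follow by expanding $x\circ_1 y = y^{-1}xy^2$ and $x^y = y^{-1}xy$ and cancelling, exactly the "easy verification" the paper asserts without writing out. Your computations for both parts check out, so this matches the paper's (omitted) argument.
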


\n The relations described in the above propositions will be termed as trivial relations for $\circ_1$. Recall that the Schur multiplier of a group $K$ has description as the group of non-trivial  commutator relations of $K$ \cite{lalb,ml}. We describe the gyro-Schur multiplier $([\check{K}, \check{K}]\bigcap \check{R}_{K} )/[\check{K}, \check{R}_{K}]$ also as the group of non-trivial relations of the right gyro-group operation $\circ_1$ of $G$.

\vspace{0.2 cm} 

\n Let $K$ be a group. Let $K\boxtimes K$ denote the abelian group generated by the set $\{x\boxtimes y\mid x, y\in K\}$ subject to the relations 
\begin{enumerate}
	\item[(i)] $1\boxtimes x\ =\ 1\ =\ x\boxtimes 1$,
	\item[(ii)] $(x\boxtimes y)((xy)\boxtimes z)\ =\ (y\boxtimes z)((x\boxtimes (y z)))$ and
	\item[(iii)] $(y^{-1}\boxtimes x)((y^{-1}x)\boxtimes y^{2})=1$,
\end{enumerate}
\n for all $x,y,z\in K$. We shall term $K\boxtimes K$ as gyro-square of $K$.

\begin{thm}\label{s6t1}
We have a free gyro-split central extension 
\begin{center}
$U\equiv\  1\longrightarrow K\boxtimes K\stackrel{i_{1}}{\longrightarrow} (K\boxtimes K)\rtimes K\stackrel{p_{2}}{\longrightarrow}K\longrightarrow 1$,
\end{center}
where $(K\boxtimes K)\rtimes K$ is a group with respect to the operation given by $(a, x)(b, y)\ =\ (ab(x\boxtimes y), xy)$.   
\end{thm}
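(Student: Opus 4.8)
The plan is to check directly that $(K\boxtimes K)\rtimes K$ is a group and that $U$ is a gyro-split central extension, to extract a universal property of $U$, and then to identify $U$ with the free gyro-split central extension $\check{E}_{K}$ of Proposition \ref{s6p2}. First, relations (i) and (ii) defining $K\boxtimes K$ say precisely that $(x,y)\mapsto x\boxtimes y$ is a normalized $2$-cocycle on $K$ with values in the abelian group $K\boxtimes K$ (trivial action), so $(1,1)$ is a two-sided identity for the stated product and the product is associative. Specializing relation (iii) to the case in which its free variable equals $y$, and combining with (ii), gives $y^{-1}\boxtimes y=1=y\boxtimes y^{-1}$ for all $y$, whence $(a,x)^{-1}=(a^{-1},x^{-1})$; so $(K\boxtimes K)\rtimes K$ is a group. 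Then $p_{2}$ is a surjective homomorphism with kernel $i_{1}(K\boxtimes K)$, which is central because $K\boxtimes K$ is abelian and acts trivially; hence $U$ is a central extension.

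Next, put $s(x)=(1,x)$, a section of $p_{2}$ with $s(1)=(1,1)$. A direct computation gives
\[s(y^{-1})\,s(x)\,s(y^{2})\ =\ \bigl(\,(y^{-1}\boxtimes x)\,((y^{-1}x)\boxtimes y^{2}),\ y^{-1}xy^{2}\,\bigr)\ =\ (1,\,y^{-1}xy^{2})\ =\ s(y^{-1}xy^{2}),\]
the middle equality being relation (iii). By Proposition \ref{s3p2}, $s$ is a gyro-homomorphism, i.e. a gyro-splitting of $U$; thus $U$ is a gyro-split central extension.

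The crux is the following universal property of $(U,s)$: for every group $G$ and every gyro-homomorphism $\phi\colon K\to G$ for which the subgroup $C=\langle\,\phi(x)\phi(y)\phi(xy)^{-1}\mid x,y\in K\,\rangle$ is central in $\langle\phi(K)\rangle$, there is a unique group homomorphism $\eta\colon(K\boxtimes K)\rtimes K\to G$ with $\eta\circ s=\phi$. Indeed, $x\boxtimes y\mapsto\phi(x)\phi(y)\phi(xy)^{-1}$ respects relations (i) and (iii) since $\phi(1)=1$ and, by Proposition \ref{s3p2}, $\phi(y^{-1}xy^{2})=\phi(y^{-1})\phi(x)\phi(y^{2})$; it respects relation (ii) precisely because $C$ is central, and this is the single computation that really uses the hypothesis. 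Hence it induces a homomorphism $\rho_{0}\colon K\boxtimes K\to C$, and $(a,x)\mapsto\rho_{0}(a)\phi(x)$ is then checked, again using the centrality of $C$, to be a homomorphism restricting to $\rho_{0}$ on $K\boxtimes K$ and to $\phi$ on $s(K)$; uniqueness is clear since $s(K)$ generates $(K\boxtimes K)\rtimes K$.

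Finally, let $\overline{t_{K}}\colon K\to\check{K}/[\check{R}_{K},\check{K}]$ be the gyro-splitting of $\check{E}_{K}$. Its "coboundaries" $\overline{t_{K}}(x)\overline{t_{K}}(y)\overline{t_{K}}(xy)^{-1}$ are the images of the factor system of the central extension $E_{K}$, hence lie in the central subgroup $\check{R}_{K}/[\check{R}_{K},\check{K}]$, so the universal property above yields a morphism $(\rho_{2},\eta_{2},I_{K})\colon U\to\check{E}_{K}$ with $\eta_{2}(1,x)=\overline{t_{K}}(x)$. Symmetrically, the universal property of $(\check{K},t_{K})$ applied to the gyro-homomorphism $s\colon K\to(K\boxtimes K)\rtimes K$, followed by passage to the quotient by $[\check{R}_{K},\check{K}]$ (which the resulting map kills, since it sends $\check{R}_{K}$ into the centre $K\boxtimes K$), gives a morphism $(\rho_{1},\eta_{1},I_{K})\colon\check{E}_{K}\to U$ with $\eta_{1}(\overline{t_{K}}(x))=(1,x)$. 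Then $\eta_{2}\circ\eta_{1}$ fixes the generating set $\{\overline{t_{K}}(x)\}$ and $\eta_{1}\circ\eta_{2}$ fixes $\{(1,x)\}$, so both composites are the identity; thus $\eta_{1},\eta_{2}$ are mutually inverse isomorphisms, and $U$ is isomorphic, as a gyro-split central extension by $K$, to $\check{E}_{K}$. Since $\check{E}_{K}$ is a free gyro-split central extension by Proposition \ref{s6p2}, so is $U$. The points demanding care are the verification of relation (ii) in the universal property, and the routine check that $\eta_{1},\eta_{2}$ are well defined and commute with the structure maps of the two extensions.
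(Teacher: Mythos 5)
Your proof is correct, and its central computation coincides with the paper's: for a gyro-splitting $t'$ of a central extension, the coboundary $(x,y)\mapsto t'(x)t'(y)t'(xy)^{-1}$ satisfies exactly the three defining relations of $K\boxtimes K$ (normalization, the cocycle identity, and --- via Proposition \ref{s3p21} --- relation (iii)), so it induces a homomorphism on $K\boxtimes K$ which extends to $(a,x)\mapsto \rho_{0}(a)\phi(x)$ on the semidirect product. The paper runs this construction directly against an arbitrary gyro-split central extension $E'$ and gyro-homomorphism $\nu\colon K\to K'$, taking $\phi=t'\circ\nu$ and reading off the morphism $(\lambda,\mu,\nu)$ at once; you instead package it as an intrinsic universal property of $(U,s)$ and then conclude freeness by exhibiting an equivalence $U\cong\check{E}_{K}$ and citing Proposition \ref{s6p2}. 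The detour is logically sound (Proposition \ref{s6p2} rests only on the earlier freeness of $E_{K}$, so there is no circularity) but strictly unnecessary: your universal property already yields the required morphism for any target, since $C\subseteq\alpha'(H')$ is central there. What your route buys is a proof of Corollary \ref{s6t1c1} (the equivalence of $U$ with $\check{E}_{K}$) as a byproduct, plus explicit verifications --- that $(K\boxtimes K)\rtimes K$ is a group and that $x\mapsto(1,x)$ is a gyro-splitting --- which the paper leaves implicit; what it costs is length and an extra dependency.
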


\begin{proof}  Let 
\begin{center}
$E'\equiv\  1\longrightarrow H'\stackrel{\alpha '}{\longrightarrow}G'\stackrel{\beta '}{\longrightarrow}K'\longrightarrow 1$
\end{center}
be a gyro-split central extension, and $\nu$ be a gyro-homomorphism from $K$ to $K'$.  Let $t$ be a gyro-splitting of $E'$, and $(K', H', \sigma^{t}, f^{t} )$ be the corresponding factor system. Then $\sigma^{t}$ is trivial. Further, $f^{t}(x, y)f^{t}(xy, z)\ =\ f^{t}(y, z)f^{t}(x, yz)$ and since $t(y^{-1}xy^{2})\ =\ t(y)^{-1}t(x)t(y)^{2}$ for all $x, y\in K'$, $f^t(y^{-1}, x)f^t(y^{-1}x, y^{2})\ =\ 1$. Thus, we have a group homomorphism $\lambda$ from $K\boxtimes K$ to $H'$ given by $\lambda (x\boxtimes y)\ =\ f^t(x, y) $. In turn, we have a map $\mu$ from $(K\boxtimes K)\rtimes K$ to $G'$ given by $\mu (a, x)\ =\ \alpha'(\lambda (a))t(\nu (x))$. It can be seen that $\mu$ is a group homomorphism and $(\lambda ,\mu , \nu )$ is a morphism. 
\end{proof}

\begin{cor}\label{s6t1c1}
The extension $\check{E}_{K}$ as described in the Proposition \ref{s6p2} is equivalent to $U$. 
\end{cor}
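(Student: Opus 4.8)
The plan is to produce an explicit equivalence rather than to argue abstractly. Of course, both $\check{E}_{K}$ (Proposition \ref{s6p2}) and $U$ (Theorem \ref{s6t1}) are free gyro-split central extensions of $K$, so applying the universal property of each to the other with $\gamma = I_{K}$ yields morphisms $\check{E}_{K}\to U$ and $U\to\check{E}_{K}$ over $I_{K}$; the trouble with this route is that freeness carries no uniqueness, so one cannot conclude at once that the two composites are the identity, and one would still have to check separately that these endomorphisms over $I_{K}$ are bijective. It is cleaner to construct the comparison map by hand and then invoke the short five lemma.

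First I would build a homomorphism $\check{K}\to (K\boxtimes K)\rtimes K$ lying over $K$. Since $\check{K}$ has the presentation $\langle K;\check{S}_{K}\rangle$, it is the group generated by a copy of $K$ subject only to the gyro-relations $\overline{y^{-1}xy^{2}} = \overline{y^{-1}}\,\overline{x}\,\overline{y^{2}}$; and the section $s(x) = (1,x)$ of $U$ satisfies exactly these, because relation (iii) in the definition of $K\boxtimes K$ gives $(1,y^{-1})(1,x)(1,y^{2}) = ((y^{-1}\boxtimes x)((y^{-1}x)\boxtimes y^{2}),\,y^{-1}xy^{2}) = (1,y^{-1}xy^{2})$. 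Hence $\overline{x}\mapsto (1,x)$ defines a group homomorphism $\check{K}\to (K\boxtimes K)\rtimes K$ commuting with the projections to $K$; it is onto because $(1,x)(1,y)(1,xy)^{-1} = i_{1}(x\boxtimes y)$. As $K\boxtimes K$ is central in $(K\boxtimes K)\rtimes K$, the subgroup $[\check{R}_{K},\check{K}]$ is killed, so we obtain a surjection $\overline{\mu}\colon \check{K}/[\check{R}_{K},\check{K}]\to (K\boxtimes K)\rtimes K$ over $K$, and its restriction to kernels is the map $\overline{\rho}\colon \check{R}_{K}/[\check{R}_{K},\check{K}]\to K\boxtimes K$ sending the class of $c(x,y) := \overline{xy}^{-1}\,\overline{x}\,\overline{y}$ to $x\boxtimes y$ (here one uses $z^{-1}\boxtimes z = 1$ in $K\boxtimes K$, which follows from relations (i) and (iii), together with relation (ii)).

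Next I would produce the inverse on kernels. Working in $\check{K}/[\check{R}_{K},\check{K}]$, where the $c(x,y)$ are central, I would check that the classes $[c(x,y)]$ satisfy the three defining relations of $K\boxtimes K$: relation (i) is immediate from $\overline{1}=1$ and $\overline{x^{-1}}=\overline{x}^{-1}$ in $\check{K}$ (Proposition \ref{s3p1}); relation (ii) comes from expanding $\overline{x}\,\overline{y}\,\overline{z}$ in two ways using $\overline{a}\,\overline{b}=\overline{ab}\,c(a,b)$ and the associativity of $\check{K}$, after moving the now-central factors $c(\cdot,\cdot)$ to one side; and relation (iii) comes the same way, combined with the gyro-relation $\overline{y^{-1}}\,\overline{x}\,\overline{y^{2}}=\overline{y^{-1}xy^{2}}$ of $\check{K}$. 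This gives a homomorphism $\Psi\colon K\boxtimes K\to \check{R}_{K}/[\check{R}_{K},\check{K}]$, $x\boxtimes y\mapsto [c(x,y)]$, with $\overline{\rho}\circ\Psi = I$; and since $\check{R}_{K}/[\check{R}_{K},\check{K}]$ is generated by the classes $[c(x,y)]$, $\Psi$ is also onto, hence an isomorphism with $\overline{\rho}=\Psi^{-1}$. Finally $(\Psi^{-1},\overline{\mu},I_{K})$ is a morphism $\check{E}_{K}\to U$ whose outer two components are bijective, so by the short five lemma $\overline{\mu}$ is an isomorphism and this morphism is an equivalence.

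I expect the main obstacle to be the bookkeeping in the penultimate step rather than any conceptual difficulty: one must keep careful track that elements of $K$ such as $y^{-1}xy^{2}$ and $y^{2}$ are single generators of $F(K)$ (not words), and reduce systematically modulo $[\check{R}_{K},\check{K}]$ so that commutators like $[c(x,y),\overline{z}]$ may be discarded. Once relations (i)--(iii) are verified for the $[c(x,y)]$, the rest is formal.
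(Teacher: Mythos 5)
Your proof is correct and follows essentially the same route as the paper: the forward map is the same homomorphism $\check{K}\to (K\boxtimes K)\rtimes K$ induced by the gyro-homomorphism $x\mapsto(1,x)$, and your explicit inverse on kernels, $x\boxtimes y\mapsto[c(x,y)]$, is exactly the morphism $\lambda$ that the paper obtains by citing Theorem \ref{s6t1} applied to $\check{E}_{K}$. The only difference is that you unpack the verification that the two morphisms are mutually inverse (via the relations (i)--(iii) for the $[c(x,y)]$ and the short five lemma), which the paper leaves implicit.
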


\begin{proof} Since the map $x\mapsto (1, x)$ is a gyro-homomorphism from $K$ to $(K\boxtimes K)\rtimes K$, it induces a group homomorphism $\mu$ from $\check{K}$ to $(K\boxtimes K)\rtimes K$ given by $\mu (x \check{S}_{K})\ =\ (1,x)$. It can be easily observed that $[\check{R}_{K}, \check{K}]$ is contained in the kernel of $\mu$. This in turn induces a morphism from $\check{E}_{K}$ to $U$. Further, Theorem \ref{s6t1} gives the inverse of this morphism. 
\end{proof}  

\n Let $K\stackrel{\sigma}{\rightarrow}Aut (H)$ be an abstract kernel, where $H$ is an abelian group. Let 

\begin{center}
$1\longrightarrow H\stackrel{i}{\longrightarrow}G\stackrel{\nu}{\longrightarrow}K\longrightarrow 1$
\end{center}
be a gyro-split extension of $H$ by $K$ which is associated to $\sigma$. Note that it is central extension if and only if $\sigma$ is trivial. We denote the image $\sigma(x)$ by $\sigma_x$. Consider the subset $A=\{h\in H\mid \sigma_x(h)=h, ~ \forall x\in K\}$. Evidently, $A$ is a central subgroup of $G$ and we have the following commutative diagram.

\[
\begin{tikzcd}
  1 \arrow[r] & A \arrow[d, "i"] \arrow[r, "i"] & G \arrow[d, "I_G"] \arrow[r, "\tilde{\nu}"] & G/A \arrow[d, "\hat{\nu}"] \arrow[r] & 1 \\
  1 \arrow[r] & H  \arrow[r, "i"] & G \arrow[r, "\nu"] & K  \arrow[r] & 1 
\end{tikzcd}
\]

\n where the top row is a gyro-split central extension of $A$ by $G/A$ and the maps are the obvious maps. Indeed, if $t$ is a gyro-splitting of the bottom row, then $t\circ \hat{\nu}$ is a gyro-splitting of the top row. From the proof of the Theorem \ref{s6t1}, we have a morphism from the extension $U$ to the extension given in the top row and in turn, we have a morphism $(\chi , \psi , I_K )$ from $U$ to the given gyro-split extension

\begin{center}
$1\longrightarrow H\stackrel{i}{\longrightarrow}G\stackrel{\nu}{\longrightarrow}K\longrightarrow 1$
\end{center}

\n with $\chi (K\boxtimes K)\subseteq A$. Conversely, let $\chi $ be a group homomorphism from $K\boxtimes K$ to $A\subseteq H$. Then $(K, H, \sigma , \tilde{\chi} )$ is a factor system, where $\tilde{\chi}$ is a map from $K\times K$ to $H$ given $\tilde{\chi}(x,y)=\chi(x\boxtimes y)$. The corresponding extension 
\begin{center}
$E_{\chi}\equiv\ 1\longrightarrow H\stackrel{i_{1}}{\longrightarrow}L\ =\ H\times K\stackrel{p_{2}}{\longrightarrow}K\longrightarrow 1$ 
\end{center}
is a gyro-split extension of $H$ by $K$ with $x\mapsto (1, x)$ as a gyro-splitting. Thus, we have a surjective map $\lambda$ from $Hom (K\boxtimes K , A)$ to $GEXT_{\sigma}(H, K)$ given by $\lambda (\chi )\ =\ [E_{\chi}]$. Clearly, $\lambda$ is also a group homomorphism. We describe the $Ker \lambda$. Now, $\chi\in Ker \lambda$ if and only if the corresponding factor system is equivalent to the trivial factor system. In other words, there is a map $g$ from $K$ to $H$ with $g(1)\ =\ 0$ such that 
$\chi (x\boxtimes y)\ =\ \partial g(x, y)\ =\ \sigma_{x}(g(y))\ -\ g(xy)\ +\ g(x) $ belongs to $A$ for all $x, y\in K$. Evidently, $(K, H, \sigma , \partial g)$ is a gyro-factor system. Let us call such a map $g$ to be a gyro-crossed homomorphism relative to $\sigma$. Thus an identity preserving map $g$ from $K$ to $H$ is a gyro-crossed homomorphism if
\begin{center}
$\sigma_{x}(\sigma_{y}(g(z))\ -\ g(yz)\ +\ g(y))\ =\ \sigma_{y} (g(z))\ -\ g(yz)\ +\ g(y)$,
\end{center}
and
\begin{center}
$\sigma_{y^{-1}}(g(x))\ +\ g(y^{-1})\ +\ \sigma_{y^{-1}x}(g(y^{2}))\ -\ g(y^{-1}xy^{2})\ =\ 0$
\end{center}
for all $x, y, z\in K$. Evidently, every crossed group homomorphism is a gyro-crossed homomorphism. However, a gyro-crossed homomorphism need not be a crossed group homomorphism. For example, if $K$ is the exponent 3 non-abelian group of order $3^3$, then the map $g$ from $K$ to $K\boxtimes K$ given by $g(x)\ =\ x\boxtimes x$ can be easily seen to be a gyro-crossed homomorphism which is not a crossed group homomorphism. Let $GC_{\sigma}(K, H)$ denote the group of all gyro-crossed homomorphisms from $K$ to $H$. The above discussion  establishes the following proposition.

\begin{prp}\label{s6p5}
A map $g$ with $g(1)\ =\ 0$ is a gyro-crossed homomorphism from $K$ to $H$ relative to $\sigma$ if and only if $(K, H, \sigma , \partial g)$ is a gyro-factor system
and $\partial g(K\times K)\subseteq A$. In turn, $\partial g$ induces a homomorphism $\overline{\partial }$ from $GC_{\sigma}(K, H)$ to $Hom (K\boxtimes K , A)\subseteq Hom (K\boxtimes K , H)$ given by $\overline{\partial g} (x\boxtimes y)\ =\ \partial g(x, y)$, and we have the exact sequence
\begin{center}
$0\rightarrow C_{\sigma}(K, H)\stackrel{i}{\rightarrow}GC_{\sigma}(K, H)\stackrel{\overline{\partial }}{\rightarrow}Hom (K\boxtimes K, A)\stackrel{\lambda}{\rightarrow}GEXT_{\sigma} (K, H)\rightarrow 0$,
\end{center}
where $C_{\sigma}(K, H)$ denotes the group of crossed homomorphisms. 
\end{prp}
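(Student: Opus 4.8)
The plan is to unwind three bundles of defining relations --- those of a gyro-crossed homomorphism, those of a gyro-factor system, and those of the gyro-square $K\boxtimes K$ --- and then read off exactness at each of the four spots, most of which is the bookkeeping already carried out in the discussion preceding the statement.

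For the first assertion I would start from the observation that, for any identity-preserving $g\colon K\to H$, the map $\partial g$ is automatically a normalised $2$-cocycle relative to the (group) homomorphism $\sigma$ (this uses only that $\sigma$ is multiplicative and $g(1)=0$), so that $(K,H,\sigma,\partial g)$ is always a factor system, and it is a \emph{gyro}-factor system precisely when $\partial g$ satisfies $(\ref{eq1})$. Since $\partial g$ is a cocycle, the two equalities in $(\ref{eq1})$ coincide --- this is just the cocycle identity applied to the triple $(y^{-1},x,y^{2})$ --- so it is enough that $\partial g(y^{-1},x)\partial g(y^{-1}x,y^{2})=1$ for all $x,y$. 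Expanding $\partial g$, this is literally the second relation in the definition of a gyro-crossed homomorphism, while the first relation there reads $\sigma_{x}(\partial g(y,z))=\partial g(y,z)$ for all $x,y,z$, i.e. $\partial g(K\times K)\subseteq A$. Hence $g\in GC_{\sigma}(K,H)$ iff $(K,H,\sigma,\partial g)$ is a gyro-factor system and $\partial g(K\times K)\subseteq A$.

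Next I would record that for $g\in GC_{\sigma}(K,H)$ the map $\partial g$ is $A$-valued and satisfies the three defining relations of $K\boxtimes K$ (normalisation because $g(1)=0$, relation (ii) because $\partial g$ is a cocycle, relation (iii) because $\partial g$ satisfies $(\ref{eq1})$), so $x\boxtimes y\mapsto\partial g(x,y)$ descends to a homomorphism $\overline{\partial g}\colon K\boxtimes K\to A$, and $g\mapsto\overline{\partial g}$ is clearly additive. Then I would check exactness spot by spot: at $C_{\sigma}(K,H)$, a crossed homomorphism has $\partial g=0$, so $C_{\sigma}(K,H)\subseteq GC_{\sigma}(K,H)$ and $i$ is the injective inclusion; at $GC_{\sigma}(K,H)$, $\overline{\partial g}=0$ iff $\partial g\equiv 0$ iff $g$ is a crossed homomorphism, so $\ker\overline{\partial}=C_{\sigma}(K,H)$; at $Hom(K\boxtimes K,A)$, if $\chi=\overline{\partial g}$ then $\widetilde{\chi}=\partial g$ is a coboundary, hence lies in $GB^{2}_{\sigma}(K,H)$ and $\lambda(\chi)=[E_{\chi}]=0$, while conversely $\lambda(\chi)=0$ forces $\widetilde{\chi}\in GB^{2}_{\sigma}(K,H)$, say $\widetilde{\chi}=\partial g$ with $g(1)=0$, and since $\widetilde{\chi}$ is $A$-valued and, being a homomorphic image of the $\boxtimes$-relations, a gyro-pairing, the first assertion gives $g\in GC_{\sigma}(K,H)$ with $\overline{\partial g}=\chi$.

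The remaining spot, surjectivity of $\lambda$ at $GEXT_{\sigma}(K,H)$, is the only part that is not purely formal and is the main obstacle: given a gyro-split extension $E\equiv 1\to H\to G\to K\to 1$ with abstract kernel $\sigma$, one passes to the central extension $1\to A\to G\to G/A\to 1$ and applies the universal property of $U$ (Theorem \ref{s6t1}) to obtain a morphism $(\chi,\psi,I_{K})\colon U\to E$ with $\chi(K\boxtimes K)\subseteq A$; then $t_{0}=\psi\circ(x\mapsto(1,x))$ is a gyro-splitting of $E$ whose factor system is $\widetilde{\chi}$, and since the abstract kernel does not depend on the choice of splitting ($H$ being abelian), $(K,H,\sigma,\widetilde{\chi})$ is the gyro-factor system of both $E$ and $E_{\chi}$, whence $[E]=[E_{\chi}]=\lambda(\chi)$. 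With surjectivity in hand the proposition assembles from the pieces above; everything except this last step is a matter of matching the defining relations of gyro-crossed homomorphisms, gyro-factor systems, and $K\boxtimes K$.
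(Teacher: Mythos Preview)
Your proposal is correct and follows essentially the same route as the paper: the paper gives no separate proof of this proposition, declaring instead that ``the above discussion establishes'' it, and that discussion is precisely the sequence of observations you have written out --- the description of $\ker\lambda$ via coboundaries landing in $A$, the identification of the two defining conditions of a gyro-crossed homomorphism with ``$\partial g$ is a gyro-pairing'' and ``$\partial g(K\times K)\subseteq A$'' respectively, and the surjectivity of $\lambda$ obtained from the morphism $(\chi,\psi,I_{K})$ out of the universal extension $U$ of Theorem~\ref{s6t1}. Your version is in fact more explicit than the paper on two points: you note that the two halves of $(\ref{eq1})$ coincide for a cocycle via the cocycle identity on $(y^{-1},x,y^{2})$, and you spell out why the $\chi$ produced from $U$ actually satisfies $[E_{\chi}]=[E]$ (by exhibiting $\psi\circ(x\mapsto(1,x))$ as a gyro-splitting of $E$ with factor pair $\widetilde{\chi}$).
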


In case $\sigma$ is trivial or equivalently, it is a central extension, then we omit $\sigma$ in the notation. In particular, we have the following  exact sequence:
\begin{center}
$0\rightarrow Hom (K, H)\stackrel{i}{\rightarrow}GC(K, H)\stackrel{\overline{\partial}}{\rightarrow}Hom (K\boxtimes K, H)\stackrel{\lambda}{\rightarrow}GEXT(K, H)\rightarrow 0$.
\end{center}



\section{Universal free gyro-split central extension, Milnor gyro-$K_{2}$ group}
\begin{df}
A gyro-split central extension 
\begin{center}
$\Omega_{K}\equiv\ 1\longrightarrow H\stackrel{i}{\longrightarrow}U\stackrel{j}{\longrightarrow}K\longrightarrow 1$
\end{center}
will be termed as a universal free gyro-split central extension by $K$ if given any gyro-split central extension
\begin{center}
$E\equiv\ 1\longrightarrow L\stackrel{\alpha}{\longrightarrow}G\stackrel{\beta}{\longrightarrow}K\longrightarrow 1$
\end{center}
by $K$, there is a unique group homomorphism $\phi$ from $U$ to $G$ inducing a morphism $(\xi , \phi , I_{K})$ from $\Omega_{K}$ to $E$.
\end{df}

\n Evidently, a universal free gyro-split central extension by $K$ (if exists) is unique up to equivalence.

\begin{prp}\label{s7t1}
If 
\begin{center}
$\Omega_{K}\equiv\ 1\longrightarrow H\stackrel{i}{\longrightarrow}U\stackrel{j}{\longrightarrow}K\longrightarrow 1$
\end{center}
is a universal free gyro-split central extension by $K$, then $U$ is perfect. In particular, $K$ is perfect.
\end{prp}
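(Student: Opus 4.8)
The plan is to adapt the classical argument that the total space of a universal central extension is perfect, taking care that the auxiliary extension we build is still gyro-split and that the two competing lifts it produces are genuine morphisms in the category of gyro-split extensions by $K$.

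First I would argue by contradiction, assuming $U$ is not perfect. Then the abelianization $A\ =\ U/[U, U]$ is a non-trivial abelian group; write $\pi$ for the quotient homomorphism from $U$ to $A$. Form the extension
\begin{center}
$E'\equiv\ 1\longrightarrow A\stackrel{\iota}{\longrightarrow}K\times A\stackrel{p}{\longrightarrow}K\longrightarrow 1$,
\end{center}
where $\iota(a)\ =\ (1, a)$ and $p$ is the projection onto the first coordinate. Since $\{1\}\times A$ lies in the center of $K\times A$, this is a central extension of $A$ by $K$; and since the group homomorphism $k\mapsto (k, 1)$ from $K$ to $K\times A$ is in particular a gyro-homomorphism splitting $p$, the extension $E'$ is a gyro-split central extension by $K$.

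Next I would write down two group homomorphisms from $U$ to $K\times A$, namely $\phi_{1}(u)\ =\ (j(u), 1)$ and $\phi_{2}(u)\ =\ (j(u), \pi(u))$ for $u\in U$. Both are homomorphisms because $A$ is abelian and $j,\pi$ are homomorphisms, and both satisfy $p\circ \phi_{r}\ =\ j$. Since $j\circ i$ is trivial, each $\phi_{r}$ carries $i(H)$ into $\{1\}\times A\ =\ \iota(A)$ and hence restricts to a group homomorphism $\xi_{r}$ from $H$ to $A$. Therefore $(\xi_{r}, \phi_{r}, I_{K})$ is a morphism from $\Omega_{K}$ to $E'$ for both $r=1,2$. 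As $A$ is non-trivial we may pick $u_{0}\in U$ with $\pi(u_{0})\neq 1$, and then $\phi_{1}(u_{0})\neq \phi_{2}(u_{0})$. This contradicts the uniqueness of the group homomorphism $\phi$ in the definition of a universal free gyro-split central extension. Hence $U\ =\ [U, U]$, that is, $U$ is perfect.

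For the last assertion, since $j$ is a surjective group homomorphism and $U\ =\ [U, U]$, we get $K\ =\ j(U)\ =\ j([U, U])\ =\ [j(U), j(U)]\ =\ [K, K]$, so $K$ is perfect. I do not expect a serious obstacle here: the only points demanding attention are the verifications that $E'$ is genuinely gyro-split and that $\phi_{1}$ and $\phi_{2}$ are honest morphisms in the category of gyro-split extensions by $K$, and once those are settled the reasoning is exactly the familiar one for universal central extensions.
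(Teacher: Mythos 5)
Your proof is correct and is essentially the paper's own argument: the paper also assumes $U$ is not perfect, forms the split (hence gyro-split) central extension with kernel $U/[U,U]$ over $K$, and contradicts uniqueness by exhibiting the two lifts $u\mapsto (j(u),1)$ and $u\mapsto (j(u),\pi(u))$. No substantive difference beyond the order of the direct factors.
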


\begin{proof} Suppose that $U$ is not perfect. Then $U/[U, U]$ is a non-trivial abelian group. Consider the direct product extension
\begin{center}
$1\longrightarrow U/[U, U]\stackrel{i_{1}}{\longrightarrow} U/[U, U]\times K\stackrel{p_{2}}{\longrightarrow}K\longrightarrow 1$.
\end{center}
Clearly, this extension is a gyro-split (indeed, a split) central extension. Further, the map $(\nu , j)$ from $U$ to $U/[U, U]\times K$ defined by $(\nu , j)(u)\ =\ (u[U, U], j(u))$ and $(0, j)$ given by $(0 , j)(u)\ =\ ([U, U], j(u))$ are two group homomorphisms inducing morphisms  from $\Omega_{K}$ to this extension.  This is a contradiction. This shows that $U$ is perfect. Consequently, $K$ is perfect. 
\end{proof}

\n Let us call a gyro-homomorphism $f$ from a group $G$ to a group $K$ to be a strong gyro-homomorphism if $f$ preserves the commutator operation in the sense that $f([a, b])\ =\ [f(a), f(b)]$ for all $a, b\in G$. An extension $E$ is said to be a strong gyro-split extension if it has a section $t$ which is a strong gyro-homomorphism. We have a category $\cat{SGP}$ whose objects are groups and morphism between groups are strong gyro-homomorphisms. Obviously, the category $\cat{GP}$ of groups is a faithful (but not full) subcategory of $\cat{SGP}$. We construct the adjoint to the inclusion functor from $\cat{GP}$ to $\cat{SGP}$.

\vspace{0.2 cm}

\n Let $K$ be a group. Consider the free group $F(K)$ on $K$ and standard group homomorphism $\rho$ from $F(K)$ to $K$ which is the identity map on $K$. Let $SG(K)$ denote the set $\check{S}_{K}\bigcup \{(xyx^{-1}y^{-1})^{-1}\star x\star y\star x^{-1}\star y^{-1}\mid x, y\in K\}$ of words in $F(K)$, and $\check{SG}(K) $ denote the group having the presentation $\langle K; SG(K) \rangle$. More explicitly, $\check{SG}(K)\ =\ F(K)/\langle SG(K) \rangle $. It follows from the construction that the association $K\mapsto \hat{SG(K)}$ defines a functor from $\cat{GP}$ to $\cat{SGP}$, which is adjoint to the forgetful functor from $\cat{SGP}$ to $\cat{GP}$. Clearly, $\langle R_{K} \rangle\supseteq \langle SG(K) \rangle$. Further, we have a strong gyro-split extension
\begin{center}
$\tilde{E}_{K}\equiv\ 1\longrightarrow \langle R_{K} \rangle / \langle SG(K) \rangle\stackrel{i}{\longrightarrow}\check{SG}(K)\stackrel{\nu}{\longrightarrow} K\longrightarrow 1$.
\end{center}

Evidently, $\tilde{E}_K$ is a free strong gyro-split extension by $K$. We may term $\langle R_{K} \rangle / \langle SG(K) \rangle$ as a strong gyro-multiplier. Note again that if $K$ is free on a set having at least two elements, the strong gyro-multiplier is non-trivial.

\begin{prp}\label{s7t2}
Let $K$ be a perfect group in which every element is a commutator. Then $K$ admits a universal free gyro-split central extension.
\end{prp}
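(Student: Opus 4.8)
The plan is to realise the universal free gyro-split central extension as the commutator subgroup of the free gyro-split central extension $\check{E}_K$ of Proposition \ref{s6p2}. Write $\hat{K}=\check{K}/[\check{R}_K,\check{K}]$ and $Z=\check{R}_K/[\check{R}_K,\check{K}]$, so that $\check{E}_K\equiv 1\to Z\to\hat{K}\xrightarrow{\pi}K\to 1$ is a free gyro-split central extension with central kernel $Z$ and gyro-splitting $\bar{t}=\overline{t_K}$. Set $U=[\hat{K},\hat{K}]$ and let
\[
\Omega_K\equiv\ 1\longrightarrow Z\cap U\stackrel{i}{\longrightarrow}U\stackrel{\pi|_U}{\longrightarrow}K\longrightarrow 1 .
\]
Because $K=\hat{K}/Z$ is perfect we have $\hat{K}=Z\,U$, and since $Z$ is central this already gives $U=[\hat{K},\hat{K}]=[ZU,ZU]=[U,U]$, so $U$ is perfect, while $\pi|_U$ is onto with kernel the central subgroup $Z\cap U$. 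Thus $\Omega_K$ is a central extension of $K$ whose middle group is perfect, exactly the shape forced by Proposition \ref{s7t1}; note that if $\hat{K}$ happens already to be perfect this restriction is vacuous and $\bar t$ itself is the gyro-splitting.

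Granting for the moment that $\Omega_K$ is gyro-split (the crux, addressed below), freeness and universality are then formal. For freeness, given any gyro-split central extension $E\equiv 1\to L\to G\xrightarrow{\beta}K\to 1$, the freeness of $\check{E}_K$ (Proposition \ref{s6p2}) yields a morphism $(\lambda,\mu,I_K):\check{E}_K\to E$ with $\mu:\hat{K}\to G$ a group homomorphism over $K$ sending $Z$ into $L$; restricting $\mu$ to $U$ produces a group homomorphism $\phi=\mu|_U:U\to G$ over $K$ with $\phi(Z\cap U)\subseteq L$, i.e.\ a morphism $(\phi|_{Z\cap U},\phi,I_K):\Omega_K\to E$. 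For uniqueness, suppose $\phi_1,\phi_2:U\to G$ both underlie morphisms over $I_K$. Then $u\mapsto\phi_1(u)\phi_2(u)^{-1}$ takes values in $\ker\beta=L$, and since $L$ is central in $G$ this is a group homomorphism $U\to L$; as $L$ is abelian it kills $[U,U]=U$, so $\phi_1=\phi_2$, and the $Z\cap U$-component is then forced. Hence $\Omega_K$ is a universal free gyro-split central extension by $K$.

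It remains to produce a gyro-section $s:K\to U$; this is the step that genuinely uses ``every element of $K$ is a commutator'', and I expect it to be the main obstacle. The plan is to route through the free \emph{strong} gyro-split extension $\tilde{E}_K\equiv 1\to\langle R_K\rangle/\langle SG(K)\rangle\to\check{SG}(K)\xrightarrow{\nu}K\to 1$ of Section 7. Its canonical section $s_0:K\to\check{SG}(K)$ is a strong gyro-homomorphism, hence $s_0([a,b])=[s_0(a),s_0(b)]$ for all $a,b\in K$; since every element of $K$ is a commutator, this forces $s_0(K)\subseteq[\check{SG}(K),\check{SG}(K)]$. Passing to the central quotient $\check{SG}(K)/[\langle R_K\rangle/\langle SG(K)\rangle,\check{SG}(K)]$, which inherits a strong gyro-splitting from $s_0$, and restricting to its commutator subgroup, one obtains a gyro-split central extension of $K$ whose section takes values in the (perfect) commutator subgroup. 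Comparing this extension with $\check{E}_K$ by means of the natural surjection $\check{K}\twoheadrightarrow\check{SG}(K)$ (available because $SG(K)\supseteq\check{S}_K$) together with the relevant universal properties, one transports the gyro-section onto $\Omega_K$; equivalently, in the language of Theorem \ref{s5p4} and Proposition \ref{s5p5}, one exhibits inside $(U,\circ_1)$ a sub right loop $S$, a suitable ``commutator translate'' of $\bar{t}(K)$ which the commutator hypothesis places inside $U$, with $U=(Z\cap U)S$ and $(Z\cap U)y^2\cap S=\{y^2\}$ for all $y\in S$. The delicate point is precisely this transport: the natural map runs only \emph{from} $\hat{K}$ \emph{onto} the $\check{SG}$-quotient, so one must either lift the strong gyro-section back along $[\hat{K},\hat{K}]\twoheadrightarrow[\,\cdot\,,\,\cdot\,]$, or correct $\bar{t}$ by an appropriate gyro-homomorphism into $Z$ so that its image lands in $U$; carrying this out carefully (and no further) finishes the proof.
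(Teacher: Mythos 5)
There is a genuine gap, and it sits exactly where you flag it: the gyro-splitting of $\Omega_K$. You build $U=[\hat{K},\hat{K}]$ inside $\hat{K}=\check{K}/[\check{R}_K,\check{K}]$, but the canonical section $\bar t$ of $\check{E}_K$ is only a gyro-homomorphism, not a \emph{strong} one, so there is no reason for $\bar t([a,b])$ to equal $[\bar t(a),\bar t(b)]$; consequently the hypothesis that every element of $K$ is a commutator does \emph{not} place $\bar t(K)$ inside $U$, and the extension $1\to Z\cap U\to U\to K\to 1$ has no visible gyro-section. Your proposed repairs do not close this. Lifting the strong gyro-section of the $\check{SG}$-quotient back along the surjection $\hat{K}\twoheadrightarrow \check{SG}(K)/[\,\cdot\,,\,\cdot\,]$ is lifting a section against the direction of the map, which is not possible in general. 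Correcting $\bar t$ by a map $c:K\to Z$ with $c(x)\bar t(x)\in U$ is possible set-theoretically (since $\hat{K}=ZU$), but for $c\bar t$ to remain a gyro-homomorphism one needs $c(y^{-1}xy^{2})=c(x)c(y)$ exactly, with $c$ constrained modulo $Z\cap U$; this is an unresolved obstruction problem, not a routine verification. (The formal parts of your argument --- perfectness of $U$, existence of the morphism by restricting $\mu$ from Proposition \ref{s6p2}, and uniqueness via the fact that two lifts differ by a homomorphism from the perfect group $U$ into the central abelian kernel --- are fine.)

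The paper sidesteps the transport problem entirely by changing the ambient universal object: it works from the start with the free \emph{strong} gyro-split extension $\tilde{E}_K$, whose canonical section $t$ into $\check{SG}(K)$ \emph{is} commutator-preserving by construction (the relations $SG(K)$ include $(xyx^{-1}y^{-1})^{-1}\star x\star y\star x^{-1}\star y^{-1}$). Then ``every element of $K$ is a commutator'' immediately gives $t(K)\subseteq[\check{SG}(K),\check{SG}(K)]$, and after passing to the central quotient by $[\check{R}_K,\check{SG}(K)]$ the induced section lands in the commutator subgroup, yielding the gyro-split central extension $\check{\check{E}}_K$ with middle term $[\check{SG}(K),\check{SG}(K)]/[\check{R}_K,\check{SG}(K)]$. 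Freeness is then pulled from $E_K$ by restricting to commutator subgroups (using that $\beta|_{[G,G]}$ is onto since $K$ is perfect), and uniqueness follows from perfectness as in your argument. So the correct fix for your proof is not to transport anything: replace $\check{K}$ by $\check{SG}(K)$ and $[\check{R}_K,\check{K}]$ by $[\check{R}_K,\check{SG}(K)]$ throughout, and your own observation about $s_0$ then does all the work.
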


\begin{proof} Let $K$ be a perfect group in which every element is a commutator. Consider the strong gyro-split extension
\begin{center}
$\tilde{E}_{K}\equiv\ 1\longrightarrow \check{R}_{K}\ =\  \langle R_{K} \rangle / \langle SG(K) \rangle \stackrel{i}{\longrightarrow}\check{SG}(K)\stackrel{\nu}{\longrightarrow} K\longrightarrow 1$.
\end{center}

\n having a strong gyro-splitting $t$ given by $t(x)\ =\ x\langle R_{K} \rangle / \langle SG(K) \rangle$. Since every element of $K$ is a commutator, image of $t$ is contained in $[\check{SG}(K),\check{SG}(K)]$. In turn, we get a gyro-split central extension
\begin{center}
$\check{\check{E}}_{K}\equiv\ 1\longrightarrow  (\check{R}_{K}\bigcap [\check{SG}(K), \check{SG}(K)])/[\check{R}_{K},\check{SG}(K) ] \stackrel{i}{\rightarrow}[\check{SG}(K), \check{SG}(K)] /[\check{R}_{K}, \check{SG}(K)]\stackrel{\nu}{\rightarrow}K\longrightarrow 1$.
\end{center}
We show that $\check{\check{E}}_{K}$ is universal free gyro-split central extension. Let
\begin{center}
$E\equiv\ 1\longrightarrow H\stackrel{i}{\longrightarrow}G\stackrel{\beta}{\longrightarrow}K\longrightarrow 1$
\end{center}
be a gyro-split central extension by $K$. Since $E_{K}$ is a free gyro-split extension by $K$, there is a homomorphism $\phi$ from $\check{K}$ to $G$ which induces a morphism $(\phi |_{\check{R}_{K}}, \phi , I_{K})$ from $E_{K}$ to $E$. Further, since $K$ is perfect, $\beta\mid_{[G,G]}$ is a surjective group homomorphism. In turn, we get a central extension 
\begin{center}
$E'\equiv\ 1\longrightarrow H\bigcap [G, G]\stackrel{i}{\longrightarrow}[G, G]\stackrel{\beta}{\longrightarrow}K\longrightarrow 1$.
\end{center}
It follows from the construction that $\phi$ induces a group homomorphism from $[\check{SG}(K),\check{SG}(K)] /[\check{R}_{K}, \check{SG}(K)]$ to $[G, G]$ which, in turn, induces a morphism from  $\check{\check{E}}_K$ to $E'$. Since $K$ is perfect, $[\check{SG}(K), \check{SG}(K)] /[\check{R}_{K}, \check{SG}(K)]$ is also perfect. Consequently, the induced morphism is unique (see \cite[Proposition 10.4.2]{lalb}).
\end{proof}

\begin{cor}\label{s7t2c1}
(i) Every finite simple group admits a universal free gyro-split central extension.

(ii) $SU(n)$ admits a universal free gyro-split central extension.
\end{cor}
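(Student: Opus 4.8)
The plan is to reduce both statements to Proposition~\ref{s7t2}, whose hypotheses are exactly that $K$ is perfect and that every element of $K$ is a commutator. Once these two properties are verified for the groups in question, the existence of a universal free gyro-split central extension is immediate, with no further construction needed.

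For part (i), I would first recall that a non-abelian finite simple group $K$ is perfect: the commutator subgroup $[K,K]$ is normal in $K$ and is non-trivial since $K$ is non-abelian, hence $[K,K]=K$. (The simple groups of prime order are necessarily excluded here, since by Proposition~\ref{s7t1} a group admitting a universal free gyro-split central extension must itself be perfect.) Next I would invoke the Ore conjecture, now a theorem of Liebeck, O'Brien, Shalev and Tiep, which asserts that every element of a non-abelian finite simple group is a single commutator. With both hypotheses of Proposition~\ref{s7t2} in hand, the conclusion follows at once.

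For part (ii), I would argue that $SU(n)$ with $n\geq 2$ is a compact connected simple Lie group; in particular it is perfect, because its commutator subgroup is a closed connected normal subgroup of positive dimension and therefore equals $SU(n)$. Then I would appeal to Goto's theorem, which states that in a compact connected semisimple Lie group every element can be written as a single commutator. Again Proposition~\ref{s7t2} applies verbatim and produces the universal free gyro-split central extension of $SU(n)$.

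The point to emphasise is that there is no genuine internal obstacle: all of the depth is packaged into the two external inputs — the Ore conjecture in the finite case and Goto's commutator theorem in the Lie-group case — after which Proposition~\ref{s7t2} does all the work. The only care required is to stay in the non-abelian (respectively positive-dimensional) setting, so that perfectness, which is a necessary condition by Proposition~\ref{s7t1}, is not violated.
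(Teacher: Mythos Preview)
Your proposal is correct and follows the paper's own argument essentially verbatim: reduce to Proposition~\ref{s7t2} via the Ore conjecture for finite simple groups and a commutator-surjectivity result for $SU(n)$. The only cosmetic difference is that for part~(ii) the paper cites Toyama's matrix result \cite{ht} directly rather than Goto's general theorem, but the logic is identical.
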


\begin{proof} The proof of the Ore's conjecture \cite{ml} implies (i), while the fact that every element of $SU(n)$ is a commutator \cite{ht} implies (ii). 
\end{proof}

\begin{rmk}
It is not clear if every perfect group admits a universal free gyro-split central extension.
\end{rmk}

We have the following gyro analogues of non-abelian exterior square, Steinberg group, and Milnor $K_{2}$.
\begin{df}
We shall term $[\check{SG}(K), \check{SG}(K)] /[\check{R}_{K}, \check{SG}(K)]$ as a non-abelian gyro-exterior square of $K$ and denote it by $K\bigwedge^{G} K$. If $K$ is perfect, we have the universal free gyro-split central extension
\begin{center}
$1\longrightarrow M^{G}(K)\stackrel{i}{\longrightarrow}K\bigwedge^{G}K\stackrel{\nu}{\longrightarrow}K\longrightarrow 1$,
\end{center}
where $M^{G}(K)\ =\ (\check{R}_{K}\bigcap [\check{SG}(K), \check{SG}(K)])/[\check{R}_{K}, \check{SG}(K)]$ is gyro-Schur multiplier of $K$. Further, for any ring $R$ with identity, we have the invariant $St^{G}(R)\ =\ E(R)\bigwedge^{G} E(R)$  termed as gyro-Steinberg group over $R$ and the group $K^{G}_{2}(R)\ =\ M^{G}(E(R))$ termed as gyro-Milnor group. 
\end{df}

We have the exact sequence
\begin{center}
$1\longrightarrow K^{G}_{2}(R)\longrightarrow St^{G}(R)\longrightarrow E(R)\longrightarrow 1$.
\end{center}

\n \textbf{Acknowledgment:} Authors are extremely grateful to the reviewer for his/her fruitful comments.

\end{document}